\numberwithin{equation}{section}
\mathchardef\mhyphen="2D
\theoremstyle{plain}
\newtheorem{theorem}{Theorem}[section]
\newtheorem*{theorem*}{Theorem}
\newtheorem{lemma}[theorem]{Lemma}
\newtheorem{proposition}[theorem]{Proposition}
\newtheorem{hypothesis}[theorem]{Hypothesis}
\newtheorem{corollary}[theorem]{Corollary}
\theoremstyle{definition}
\newtheorem{definition}[theorem]{Definition}
\newtheorem{example}[theorem]{Example}
\newtheorem{remark}[theorem]{Remark}
\newtheorem{question}[theorem]{Question}
\let\c@equation\c@theorem  
\newcommand{\bfl}{\mathfrak l}
\newcommand{\ta}{\mathfrak {ta}}
\newcommand{\tc}{\mathfrak {tc}}
\DeclareMathOperator{\ged}{ged}
\DeclareMathOperator{\gldim}{gldim}
\DeclareMathOperator{\Ext}{Ext} 
\DeclareMathOperator{\Tor}{Tor}
\DeclareMathOperator{\R}{R}
\DeclareMathOperator{\pdim}{projdim}
\DeclareMathOperator{\Spec}{Spec} 
 \DeclareMathOperator{\cd}{cd}
\DeclareMathOperator{\lcd}{lcd} 
 \DeclareMathOperator{\Gr}{\mhyphen Gr}
\DeclareMathOperator{\injdim}{injdim}
\DeclareMathOperator{\GKdim}{GKdim}
\DeclareMathOperator{\ASreg}{ASreg} 
\DeclareMathOperator{\CMreg}{CMreg} 
\DeclareMathOperator{\Extreg}{Extreg} 
\DeclareMathOperator{\Torreg}{Torreg}
\DeclareMathOperator{\D}{\mathsf{D}} \DeclareMathOperator{\Hom}{Hom}
\DeclareMathOperator{\RHom}{RHom}
 \DeclareMathOperator{\im}{im}
\newcommand{\fm}{\mathfrak{m}}
\newcommand{\be}{\begin{enumerate}}
\newcommand{\ee}{\end{enumerate}}
\newcommand{\bq}{\begin{eqnarray*}}
\newcommand{\eq}{\end{eqnarray*}}
\newcommand{\bqn}{\begin{eqnarray}}
\newcommand{\eqn}{\end{eqnarray}}
\newcommand{\op}{\textnormal{op}}
\newcommand{\fg}{\textnormal{fg}}
\newcommand{\bb}{\textnormal{b}}
\DeclareMathOperator{\cone}{cone}
\begin{document}
\title{Homological regularities and concavities}

\author{E. Kirkman, R. Won and J.J. Zhang}

\address{Kirkman: Department of Mathematics \& Statistics,
P. O. Box 7388, Wake Forest University, Winston--Salem, NC 27109, USA}
\email{kirkman@wfu.edu}

\address{Won: Department of Mathematics,
The George Washington University, 
Washington, DC 20052, USA}
\email{robertwon@gwu.edu}

\address{Zhang: Department of Mathematics, Box 354350,
University of Washington, Seattle, WA 98195, USA}
\email{zhang@math.washington.edu}

\begin{abstract} 
This paper concerns homological notions
of regularity for noncommutative algebras.
Properties of an algebra $A$ are reflected in the regularities of certain 
(complexes of) $A$-modules.
We study the classical Tor-regularity and Castelnuovo--Mumford regularity,
which were generalized from the commutative setting 
to the noncommutative setting by J{\o}rgensen, 
Dong, and Wu. 
We also introduce two new numerical homological invariants:
concavity and Artin--Schelter regularity.
Artin--Schelter regular algebras occupy a central position
in noncommutative algebra and noncommutative algebraic geometry,
and we use these invariants to establish
criteria which can be used to determine whether a
noetherian connected graded algebra is Artin--Schelter regular.
\end{abstract}

\makeatletter
\@namedef{subjclassname@2020}{%
  \textup{2020} Mathematics Subject Classification}
\makeatother
\subjclass[2020]{16E10, 16E65, 20J99}

\keywords{Artin--Schelter regular algebra, Castelnuovo--Mumford 
regularity, Tor-regularity, Koszul algebra, Artin--Schelter 
regularity, concavity}


\maketitle


\setcounter{section}{-1}

\section{Introduction}
\label{xxsec0}

Let $\Bbbk$ be a base field. An $\mathbb{N}$-graded algebra 
$A:=\bigoplus_{i=0}^{\infty} A_i$ is called {\it connected 
graded} if $A_0=\Bbbk$. Throughout let $A$ denote a 
connected ${\mathbb N}$-graded $\Bbbk$-algebra unless otherwise
stated. The Hilbert series $h_A(t)$ of $A$ will be recalled in
Definition~\ref{zzdef1.1}. 

A seminal result due to Stanley \cite[Theorem 4.4]{St2} states

\begin{theorem} 
\label{zzthm0.1}
Let $A$ be a commutative finitely generated connected graded 
Cohen--Macaulay domain. Then $A$ is Gorenstein if and only if 
its Hilbert series $h_A(t)$ satisfies the equation
\begin{equation}
\label{E0.1.1}\tag{E0.1.1}
h_A(t^{-1})=\pm t^{\ell} h_A(t)
\end{equation}
for some integer $\ell$.
\end{theorem}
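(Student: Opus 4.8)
The plan is to route the argument through the graded canonical module and graded local duality, since this is precisely the tool that converts the Cohen--Macaulay and domain hypotheses into a statement about Hilbert series. Write $d=\dim A$ and let $\omega_A$ denote the graded canonical module of $A$. Because $A$ is Cohen--Macaulay, its local cohomology is concentrated in a single degree, so $\operatorname{H}^i_{\fm}(A)=0$ for $i\neq d$, and graded local duality then yields the Hilbert-series identity
$$h_{\omega_A}(t) = (-1)^d\, h_A(t^{-1}).$$
I would take this identity, together with the existence of $\omega_A$ as a finitely generated graded module and the characterization ``$A$ is Gorenstein if and only if $\omega_A\cong A(a)$ for some shift $a$,'' as the main imported ingredients; verifying them is the real content, but it is standard graded commutative algebra.

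For the forward direction, suppose $A$ is Gorenstein. Then $\omega_A\cong A(a)$, so $h_{\omega_A}(t)=t^{-a}h_A(t)$, and comparing with the duality identity gives $(-1)^d h_A(t^{-1}) = t^{-a}h_A(t)$, which is exactly \eqref{E0.1.1} with $\ell=-a$ and sign $(-1)^d$. This direction uses neither the domain hypothesis nor anything beyond freeness of $\omega_A$.

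For the converse, assume $h_A(t^{-1}) = \varepsilon\, t^{\ell}h_A(t)$ with $\varepsilon=\pm 1$. The duality identity turns this into $h_{\omega_A}(t) = (-1)^d\varepsilon\, t^{\ell}h_A(t)$. Since both $h_{\omega_A}(t)$ and $t^{\ell}h_A(t)$ have nonnegative coefficients and are nonzero, the sign $(-1)^d\varepsilon$ must equal $+1$, so $h_{\omega_A}(t)=t^{\ell}h_A(t)$, the Hilbert series of $A(-\ell)$; in particular $\dim_\Bbbk(\omega_A)_\ell = \dim_\Bbbk A_0 = 1$. Now I would invoke the domain hypothesis decisively: $\omega_A$ is a torsion-free module of rank one, so a nonzero element of $(\omega_A)_\ell$ determines a nonzero graded homomorphism $\phi\colon A(-\ell)\to\omega_A$, and torsion-freeness of $\omega_A$ forces $\phi$ to be injective. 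Its cokernel then has Hilbert series $h_{\omega_A}(t)-t^{\ell}h_A(t)=0$, hence vanishes, so $\phi$ is an isomorphism and $\omega_A\cong A(-\ell)$. Thus $A$ is Gorenstein.

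The main obstacle is not any of the Hilbert-series bookkeeping, which is routine, but rather assembling the correct graded duality package: one must know that $\omega_A$ exists as a finitely generated graded module, that the Cohen--Macaulay condition collapses local cohomology to top degree so that the clean identity $h_{\omega_A}(t)=(-1)^d h_A(t^{-1})$ holds with the right internal-degree normalization, and that for a domain $\omega_A$ is torsion-free of rank one. The domain hypothesis enters in exactly one place, namely to guarantee injectivity of $\phi$ and hence that equality of Hilbert series promotes to an honest isomorphism; without it the converse can fail, since a symmetric $h$-vector alone does not force the Gorenstein property.
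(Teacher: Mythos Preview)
The paper does not prove Theorem~\ref{zzthm0.1}; it is quoted as a classical result of Stanley \cite[Theorem 4.4]{St2} and used only as motivation, so there is no in-paper proof to compare against.

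Your argument is correct and is essentially Stanley's own proof. The key ingredients you invoke---existence of a finitely generated graded canonical module $\omega_A$, the identity $h_{\omega_A}(t)=(-1)^d h_A(t^{-1})$ from graded local duality when $A$ is Cohen--Macaulay, the characterization of Gorensteinness as $\omega_A\cong A(a)$, and the fact that $\omega_A$ is torsion-free of rank one over a domain---are all standard, and you use them in the expected way. The one place worth tightening is the injectivity of $\phi\colon A(-\ell)\to\omega_A$: strictly speaking it is the domain hypothesis on $A$ (every nonzero element is a non-zero-divisor) combined with torsion-freeness of $\omega_A$ that forces $\ker\phi=0$, and you have identified this correctly as the single point where the domain assumption is essential.
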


The above theorem provides a surprising and effective 
criterion which is equivalent to the Gorenstein property. For 
example, if $G$ is a finite group acting linearly on 
the polynomial ring $B:=\Bbbk[x_1,\cdots,x_n]$, then 
the Hilbert series of the fixed subring $B^G$ can be 
calculated using Molien's Theorem \cite[Proposition 1.6]{Ki}. 
One can then use \eqref{E0.1.1} to determine whether or not 
$B^G$ is Gorenstein. In general, 
verifying \eqref{E0.1.1} is much easier 
than verifying the Gorenstein property directly. A 
noncommutative version of Stanley's criterion was proved 
in \cite[Theorems~6.1 and 6.2]{JZ}. 

Note that a commutative connected graded algebra $A$ is 
regular (i.e., has finite global dimension) if and 
only if $A$ is isomorphic to a polynomial ring 
$\Bbbk[x_1,\cdots,x_n]$ with $\deg x_i>0$ for all $i$. 
Hence, in the commutative case,
checking whether or not $A$ is regular is relatively 
easy. However, there are many more noncommutative connected graded 
regular algebras, namely, the so-called Artin--Schelter regular
algebras, which we now define. 
For a connected graded algebra $A$, let 
$\fm = A_{\geq 1}$. The trivial graded $A$-bimodule 
$A/\fm$ is also denoted by $\Bbbk$. 

\begin{definition}\cite[p.171]{AS}
\label{zzdef0.2}
A connected graded algebra $T$ is called 
{\it Artin--Schelter Gorenstein} (or {\it AS Gorenstein}, 
for short) if the following conditions hold:
\begin{enumerate}
\item[(a)]
$T$ has injective dimension $d<\infty$ on
the left and on the right,
\item[(b)]
$\Ext^i_T({}_T\Bbbk, {}_{T}T)=\Ext^i_{T}(\Bbbk_T,T_T)=0$ for all
$i\neq d$, and
\item[(c)]
$\Ext^d_T({}_T\Bbbk, {}_{T}T)\cong 
\Ext^d_{T}(\Bbbk_T,T_T)\cong \Bbbk(\bfl)$ for some
integer $\bfl$. Here $\bfl$ is called the {\it AS index} of $T$.
\end{enumerate}
In this case, we say $T$ is of type $(d,\bfl)$. If in addition,
\begin{enumerate}
\item[(d)]
$T$ has finite global dimension, and
\item[(e)]
$T$ has finite Gelfand--Kirillov dimension,
\end{enumerate}
then $T$ is called {\it Artin--Schelter regular} (or {\it AS
regular}, for short) of dimension $d$.
\end{definition}

In this paper we generally reserve the letters $S$ and $T$ for 
AS regular algebras. 
Artin--Schelter regular algebras play an important role in 
noncommutative algebraic geometry and many other subjects 
\cite{AS, ATV1, ATV2, RRZ}. 
Therefore, it would be very useful to have reasonably verifiable criteria 
for the AS regular property.

Recall that for a graded left $A$-module $M$, 
we say that $M$ {\it has a linear resolution} 
(or simply say that $M$ {\it is linear}) 
if $M$ has a minimal free $A$-resolution of the form
\begin{equation}
\label{E0.2.1}\tag{E0.2.1}
\cdots \to A(-i)^{\beta_i}\to A(-i+1)^{\beta_{i-1}}
\to \cdots \to A(-1)^{\beta_1} \to A^{\beta_0}\to M\to 0,
\end{equation}
for some integers $\beta_i$, or equivalently, 
$\Tor^A_i(\Bbbk,M)_{j}=0$ for all $j\neq i$. 
In this paper it is necessary to deal with (cochain) 
complexes of graded $A$-modules instead of graded 
$A$-modules. A complex $X$ of graded left $A$-modules is 
naturally graded by homological and internal degrees. By 
examining certain complexes, properties of $A$ can be 
reflected in the relationships between these degrees. 
For example, $A$ is {\it Koszul} if the trivial graded 
$A$-module $\Bbbk$ has a linear resolution, or, equivalently\
if the {\it Tor-regularity} [Definition~\ref{zzdef0.4}]
of $\Bbbk$ is 0. Here is one of our main results.

\begin{theorem}[Theorem~\ref{zzthm4.6}]
\label{zzthm0.3}
Let $A$ be a noetherian connected graded $s$-Cohen--Macaulay 
algebra. Suppose
that there is a graded algebra map $f: T\to A$, where $T$ is 
a Koszul Artin--Schelter regular algebra, such that the 
induced modules $A_T$ and $_TA$ are both finitely generated. Then
the following are equivalent:
\begin{enumerate}
\item[(a)]
$A$ is Artin--Schelter regular,
\item[(b)]
the Hilbert series $h_A(t)$ of $A$ satisfies $\deg h_A(t)=-s$.
\end{enumerate}
\end{theorem}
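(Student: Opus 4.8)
The plan is to show that, under these hypotheses, each of (a) and (b) is equivalent to the single normalization $h_A(t)=1/(1-t)^{s}$, and then to read off Artin--Schelter regularity from this Hilbert series via the functional equation and a multiplicity-one argument. The engine is that the module-finite map from the Koszul algebra rigidifies $h_A$. Since $T$ is Koszul AS regular of some global dimension $n$, Koszulity forces $\bfl_T=n$ and the pole of $h_T(t)$ at $t=1$ has order $\GKdim T=n$, so the degree-$n$ polynomial $h_T(t)^{-1}$ equals $(1-t)^{n}$; thus $h_T(t)=1/(1-t)^{n}$. As $T$ has finite global dimension and ${}_TA$ is finitely generated, a finite free resolution of $A$ over $T$ gives $h_A(t)=g(t)\,h_T(t)$ with $g(t)$ a polynomial and $g(0)=1$. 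Because $A$ is $s$-Cohen--Macaulay the pole of $h_A(t)$ at $t=1$ has order $s=\GKdim A$; writing $g(t)=(1-t)^{n-s}\tilde g(t)$ with $\tilde g(1)\neq0$, the condition $\deg h_A(t)=\deg g(t)-n=-s$ from (b) forces $\deg\tilde g=0$, hence $\tilde g\equiv1$, so that
\[
h_A(t)=\frac{(1-t)^{n-s}}{(1-t)^{n}}=\frac{1}{(1-t)^{s}}.
\]
The same relation $h_A=g/(1-t)^{n}$ handles (a)$\Rightarrow$(b): if $A$ is AS regular then $h_A(t)^{-1}$ is a polynomial, forcing $g\mid(1-t)^{n}$ and again $h_A(t)=1/(1-t)^{s}$, that is, $\deg h_A(t)=-s$.

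It therefore remains to prove the substantive implication (b)$\Rightarrow$(a): that an $s$-Cohen--Macaulay algebra, module-finite over $T$, with $h_A(t)=1/(1-t)^{s}$ is AS regular. The identity $h_A(t^{-1})=(-1)^{s}t^{s}h_A(t)$ holds automatically for $1/(1-t)^{s}$, so by the noncommutative Stanley criterion \cite[Theorems 6.1 and 6.2]{JZ} the Cohen--Macaulay algebra $A$ is AS Gorenstein; equivalently, the balanced dualizing complex $R_A\simeq\RHom_T(A,R_T)$ is concentrated in a single cohomological degree, and $h_{\omega_A}(t)=(-1)^{s}h_A(t^{-1})=t^{s}h_A(t)$ exhibits the canonical module $\omega_A$ as a cyclic module with the Hilbert series of $A(-s)$, whence $\omega_A\cong A(-s)$. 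One then upgrades AS Gorenstein to AS regular: the normalization $h_A(t)=1/(1-t)^{s}$ is precisely the statement that $A$ has multiplicity one, and a Cohen--Macaulay algebra of multiplicity one that is module-finite over the regular algebra $T$ must have finite global dimension, so $\pdim_A\Bbbk<\infty$ and $A$ is AS regular. Finite Gelfand--Kirillov dimension is automatic, since $\GKdim A=s<\infty$.

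The hard part is this last step---promoting AS Gorenstein with $h_A(t)=1/(1-t)^{s}$ to finite global dimension. It is the noncommutative analogue of the classical fact that a Cohen--Macaulay local ring of multiplicity one is regular, and it is exactly where the Koszulity of $T$ must enter essentially: the linear resolution of ${}_T\Bbbk$ together with the rank-one behaviour of $A$ over $T$ recorded by $g(t)=(1-t)^{n-s}$ has to be transported through $f$ to bound $\pdim_A\Bbbk$. Establishing that this transfer forces the minimal free resolution of ${}_A\Bbbk$ to terminate---equivalently, that multiplicity one propagates to regularity across the module-finite extension---is the crux of the proof.
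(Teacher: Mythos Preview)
Your reduction of both (a) and (b) to the single condition $h_A(t)=1/(1-t)^{s}$ is clean, and the direction (a)$\Rightarrow$(b) is essentially sound. But in the direction (b)$\Rightarrow$(a) there are two genuine gaps.

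First, the passage from $h_{\omega_A}(t)=t^{s}h_A(t)$ to $\omega_A\cong A(-s)$ is unjustified: equality of Hilbert series does not force an isomorphism of modules, and the noncommutative Stanley criterion of \cite{JZ} you invoke carries extra hypotheses (a domain condition, among others) that are not assumed here. So AS Gorensteinness is not yet established.

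Second---and this is the decisive gap---you explicitly acknowledge that upgrading ``AS Gorenstein with $h_A=1/(1-t)^{s}$'' to finite global dimension is ``the crux of the proof'' and then stop. The commutative fact that Cohen--Macaulay of multiplicity one implies regular does not transfer by analogy; a proof is required, and none is given.

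The paper's route is entirely different and bypasses both Stanley's criterion and the multiplicity heuristic. Since $A$ is $s$-Cohen--Macaulay one has $\CMreg(A)=s+\deg h_A(t)$, so (b) is exactly the statement $\CMreg(A)=0$. Koszulity of $T$ gives $\CMreg(T)=0$, and Theorem~\ref{zzthm2.8} yields
\[
\CMreg(A)=\CMreg({}_TA)=\Torreg({}_TA)+\CMreg(T)=\Torreg({}_TA).
\]
Thus $\CMreg(A)=0$ forces ${}_TA$ to be a \emph{linear} $T$-module. The change-of-rings spectral sequence argument of Lemma~\ref{zzlem4.3}(5) then transfers $\Torreg({}_T\Bbbk)=0$ to $\Torreg({}_A\Bbbk)=0$, whence $\ASreg(A)=\Torreg({}_A\Bbbk)+\CMreg(A)=0$, and Theorem~\ref{zzthm0.8} delivers AS regularity directly (it shows, via a careful analysis of the minimal free resolution of the balanced dualizing complex, that $\ASreg(A)=0$ forces finite projective dimension of $\Bbbk$). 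In short, your missing ``multiplicity one'' step is replaced by the linearity of ${}_TA$ together with Lemma~\ref{zzlem4.3}(5) and Theorem~\ref{zzthm0.8}; this is where the Koszul hypothesis on $T$ is actually used, and it is the substance of the argument.
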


In the commutative case, the existence of the desired map $f:T\to A$ 
is automatic (assuming $A$ is generated in degree 1). Similar to 
Theorem~\ref{zzthm0.1}, the combinatorial property in Theorem 
\ref{zzthm0.3}(b) is much easier to check than the 
definition of AS regularity and so Theorem 
\ref{zzthm0.3} provides an important criterion
which can be used to determine whether or not $A$ is
AS regular. In the rest of paper, we will prove 
other criteria for determining AS regularity by using more 
sophisticated numerical homological invariants. 

We now recall or introduce the following 
homological invariants that will be studied throughout the 
paper:
\begin{itemize}
\item
Tor-regularity,
\item
Castelnuovo--Mumford regularity,
\item
(numerical) Artin--Schelter regularity,
\item
concavities associated to the Tor- and 
Castelnuovo--Mumford regularities.
\end{itemize}

\begin{definition} \cite{Jo2, Jo3, DW}
\label{zzdef0.4}
The {\it Tor-regularity} of a nonzero complex $X$ of 
graded left $A$-modules is defined to be
\begin{equation}
\label{E0.4.1}\tag{E0.4.1}
\Torreg(X)=\sup_{i,j \in \mathbb{Z}}
\{j-i\mid \Tor^A_i(\Bbbk, X)_j\neq 0 \}.
\end{equation}
\end{definition}

It is clear that $\Torreg(X)$  provides a measure of 
the growth of the degrees of generators of the free 
modules in a minimal free resolution of $X$. 

Since $\Torreg(_A\Bbbk)\geq 0$ and the equality holds if 
and only if $A$ is Koszul, $\Torreg(_A\Bbbk)$ can be 
regarded as an invariant that measures how far $A$ is 
from being Koszul, at least in the noncommutative setting.
When $A$ is a commutative algebra generated in degree one, 
$\Torreg(_A\Bbbk)$ takes on only the values of $0$ or $\infty$ 
\cite{AP}, so it indicates only whether or not $A$ is Koszul. 
However, in the noncommutative case, $\Torreg(_A\Bbbk)$ 
can be any value in ${\mathbb N}\cup\{+\infty\}$ (see Example 
\ref{zzex2.4}(4) and Lemma~\ref{zzlem5.6}). 

Next we recall the definition of Castelnuovo--Mumford 
regularity, which was first introduced and studied in the 
noncommutative setting by J{\o}rgensen, Dong, and Wu 
\cite{Jo2, Jo3, DW}.
Recall that the {\it $i$th local cohomology} 
of a complex $X$ of graded left $A$-modules is defined to be
\begin{equation}
\label{E0.4.2}\tag{E0.4.2}
H^i_{\fm}(X)=\lim\limits_{n\to \infty} \Ext^i_A(A/\fm^n, X).
\end{equation}
See \cite{Jo1, Jo2} and Section~\ref{zzsec1} for more details.

\begin{definition} \cite{Jo2, Jo3, DW}
\label{zzdef0.5}
Let $A$ be a noetherian connected graded algebra and let $X$ 
be a nonzero complex of graded left $A$-modules. 
The {\it Castelnuovo--Mumford regularity} (or {\it CM regularity}, 
for short) of $X$ is defined to be
$$\CMreg(X)=\sup_{i,j \in \mathbb{Z}} \{j +i \; \mid \; 
H^{i}_{\fm}(X)_{j}\neq 0 \}.$$
\end{definition}

The notions of regularity defined in Definitions~\ref{zzdef0.4} 
and \ref{zzdef0.5} are natural generalizations 
of the classical $\Tor$- and Castelnuovo--Mumford regularities
in the commutative case. 
For a finitely generated graded $A$-module $M$, the relationship 
between the regularities $\Torreg(M)$ and $\CMreg(M)$ has been 
studied in the literature. When $A$ is a polynomial ring generated
in degree 1, $\Torreg(M) = \CMreg(M)$ for any finitely generated 
graded $A$-module $M$ \cite{EG}. Other relationships between these 
invariants were established in the commutative case \cite{Rom} 
and were extended to the noncommutative case in \cite{Jo2, Jo3, DW}. 
In this paper we provide further relationships between these 
invariants in the noncommutative case.

Now we introduce a new numerical homological invariant 
associated to every noetherian connected graded algebra $A$. 
\begin{definition} 
\label{zzdef0.6}
Let $A$ be a noetherian connected graded algebra.
The {\it Artin--Schelter regularity} (or {\it AS regularity}) 
of $A$ is defined to be 
$$\ASreg(A)=\Torreg(\Bbbk)+\CMreg(A).$$
\end{definition}

We remark that in the literature, the phrase ``AS regularity of $A$"
often refers to $A$ possessing the Artin--Schelter regular property 
given in Definition~\ref{zzdef0.2}. Here we use use the phrase 
``AS regularity of $A$" to refer to the numerical invariant of 
$A$ defined in Definition~\ref{zzdef0.6},  which is related to 
the AS regular property. Indeed, by \cite[Theorems 2.5 and 2.6]{Jo2}
or Corollary~\ref{zzcor2.6}(2), $\ASreg(A)\geq 0$; and by Theorem 
\ref{zzthm0.8},  equality holds if and only if $A$ is AS regular. 
Hence $\ASreg(A)$ can be considered 
as an invariant that measures how far $A$ is from being AS 
regular.

By Example~\ref{zzex2.4}(3), if $T$ is AS regular of type 
$(d,\bfl)$, then
\begin{equation}
\label{E0.6.1}\tag{E0.6.1}
\CMreg(T)=d-\bfl=-\Torreg(\Bbbk)
\end{equation}
which will appear in several places in this paper. As a 
consequence, $\ASreg(T)=0$. Some further computations of 
regularities in the non-Koszul case are provided in 
Example~\ref{zzex2.4}(3). 

The next theorem is another main result of the paper 
which was announced in \cite[Proposition 5.6]{DW} without 
proof. It is also an extension of \cite[Theorem 4.2]{Rom}.
See Corollary~\ref{zzcor3.4} for a related result. 

\begin{theorem}[Theorem~\ref{zzthm2.8}]
\label{zzthm0.7}
Let $A$ be a noetherian connected graded algebra with balanced
dualizing complex. Let $X$ be a nonzero object in $\D^{\bb}_{\fg}(A\Gr)$ 
with finite projective dimension. Then
$$\CMreg(X)= \Torreg(X)+\CMreg(A).$$
\end{theorem}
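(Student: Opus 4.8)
The plan is to reduce the computation of $\CMreg(X)$ to that of $\CMreg(A)$ by exploiting that $X$ is perfect. Since $X$ has finite projective dimension and $A$ is noetherian, I would represent $X$ by a \emph{minimal} bounded complex $F$ of finitely generated graded free modules, $F^{-i}=\bigoplus_j A(-j)^{\beta_{i,j}}$ with $\beta_{i,j}=\dim_{\Bbbk}\Tor^A_i(\Bbbk,X)_j$, so that $\Torreg(X)=\sup\{\,j-i\mid \beta_{i,j}\neq 0\,\}$. As $R\Gamma_{\fm}$ commutes with degree shifts and finite direct sums, there is a natural isomorphism $R\Gamma_{\fm}(X)\simeq R\Gamma_{\fm}(A)\otimes^{\mathbb{L}}_A X$, computed by $\mathrm{Tot}(R\Gamma_{\fm}(A)\otimes_A F)$. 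Filtering by the $F$-degree yields a hyperhomology spectral sequence
\[
E_1^{p,q}=\bigoplus_j H^{q}_{\fm}(A)(-j)^{\beta_{-p,j}}\ \Longrightarrow\ H^{p+q}_{\fm}(X),
\]
whose differentials are induced by $d_F$. Two bookkeeping facts drive everything: every differential preserves the internal degree, and every differential raises $w:=(\text{homological degree})+(\text{internal degree})$ by exactly $1$.

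The inequality $\CMreg(X)\le \Torreg(X)+\CMreg(A)$ then falls out at once: a summand $H^q_{\fm}(A)(-j)$ sitting in homological degree $p$ contributes, at internal degree $\ell$, a class of $w$-value $\big(q+(\ell-j)\big)+\big(p+j\big)$, whose two parts are bounded by $\CMreg(A)$ and $\Torreg(X)$ respectively. Hence no term of $E_1$, and therefore no term of $E_\infty$ and no nonzero $H^{m}_{\fm}(X)_\ell$, has $w$-value exceeding $N:=\Torreg(X)+\CMreg(A)$. (The balanced dualizing complex guarantees that $\CMreg(A)$ is finite and that the defining supremum is attained, and perfectness of $X$ does the same for $\Torreg(X)$, so $N$ is finite and realized.)

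For the reverse inequality the goal is to exhibit a class of $w$-value exactly $N$ surviving to $E_\infty$. Because every differential raises $w$ by one, the differentials \emph{out} of the top value $w=N$ land in $w=N+1$, which is zero on $E_1$; concretely the extremal terms occupy the top internal degree of some $H^q_{\fm}(A)$, a piece annihilated by $\fm$, so the $\fm$-valued entries of $d_F$ kill them going out. Thus the $w=N$ part of $E_\infty$ is precisely the cokernel of the incoming differential from $w=N-1$, and the entire content of the theorem is the non-vanishing of this cokernel: the extremal local cohomology classes of $A$, paired with the extremal Betti numbers of $X$ (those with $j-i=\Torreg(X)$), must not all be cancelled. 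I expect this no-cancellation statement to be the main obstacle.

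To settle it I would pass through the balanced dualizing complex $R$ and local duality, $\Ext^{i}_A(X,R)_j\cong \big(H^{-i}_{\fm}(X)_{-j}\big)^{*}$, which gives $\CMreg(X)=-\min\{\,i+j\mid \Ext^{i}_A(X,R)_j\neq 0\,\}$ and lets one compute $\RHom_A(X,R)=\Hom_A(F,R)$ from the minimal resolution $F$. In this formulation the extremal class sits at the \emph{minimal} total degree $i+j=-N$, so it has no incoming differential (nothing lies below the minimum), and it is represented by a bottom-degree, hence minimal, generator of some $H^q(R)$ supported on an extremal free summand of $F$. The residual point is then to show that the remaining outgoing differential—again multiplication by the $\fm$-valued entries of $d_F$—does not annihilate this corner, which is exactly where I would exploit the minimality of $F$ together with the generator structure of $H^{\bullet}(R)$ supplied by $R$. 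Producing such a surviving class gives $\CMreg(X)\ge \Torreg(X)+\CMreg(A)$ and, with the first inequality, the desired equality.
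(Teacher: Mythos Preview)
Your spectral-sequence setup and the derivation of the inequality $\CMreg(X)\le \Torreg(X)+\CMreg(A)$ are correct (this is J{\o}rgensen's Theorem~\ref{zzthm2.5}(2)). The overall approach is genuinely different from the paper's, which proceeds by induction on the length of the minimal free resolution of $X$ via the distinguished triangle $G\to F^0\to F\to G[1]$, where $F^0$ is the rightmost term and $G=F^{\le -1}[-1]$.

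However, there is a real gap in your argument for the reverse inequality. You correctly isolate the crux: one must show that some class at $w=N$ survives to $E_\infty$, equivalently (after local duality) that the outgoing differential on the minimal-$(i+j)$ piece of $\Hom_A(F,R)$ has nonzero kernel. But your justification---that the class is ``a bottom-degree, hence minimal, generator of some $H^q(R)$'' and that minimality of $F$ plus ``generator structure'' handles the rest---does not go through as stated. The outgoing differential on the dual side is precomposition with $d_F$: a map $\phi\colon F^{-n}\to H^q(R)$ supported on an extremal free summand is sent to $\phi\circ d_F^{-n-1}$, whose value on a generator is $a\cdot\phi(e)$ with $a\in\fm$ and $\phi(e)$ in the \emph{bottom} degree of $H^q(R)$. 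Left multiplication by $\fm$ on the bottom of $H^q(R)$ is not zero in general, so this differential need not vanish. Likewise, on the local-cohomology side one can show (by choosing the \emph{largest} $i_0$ with $t^A_{i_0}(X)-i_0=\Torreg(X)$ and a degree count) that the extremal summand at $p_0=-i_0$ is not in the image of $d_1$; but you give no argument controlling the higher $d_r$, and these do not obviously vanish.

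The paper circumvents the higher-differential problem precisely through its inductive framing: at each step only a single map $\phi_2\colon G\to F^0$ must be analyzed, and the key computation (their Case~1) shows---by the same degree/minimality reasoning you gesture at---that the composite $G\to F^0\twoheadrightarrow A(-a)$ onto the top-degree free summand is the zero map, so $H^j_{\fm}(\phi_2)$ cannot surject onto the extremal local-cohomology class. The induction hypothesis then supplies $\CMreg(G)$ and $\CMreg(F^0)$, replacing any need to track a full spectral sequence. If you want to salvage the spectral-sequence route, you would need either a uniform argument for all $d_r$, or to promote the observation that $\im(d_F^{p_0-1})$ misses the extremal summand of $F^{p_0}$ into an actual chain map out of $F$ and analyze its cone; at that point you are essentially reconstructing the paper's triangle argument.
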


As a companion to Theorem~\ref{zzthm0.7}, we generalize two very 
nice results of Dong and Wu \cite[Theorems 4.10 and 5.4]{DW} to the 
not-necessarily Koszul setting. 

\begin{theorem}
\label{zzthm0.8}
Let $A$ be a noetherian connected graded algebra with 
balanced dualizing complex. Then the following are 
equivalent:
\begin{enumerate}
\item[(a)]
$A$ is AS regular,
\item[(b)] $\ASreg(A)=0$.
\end{enumerate}
\end{theorem}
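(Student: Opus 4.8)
The key observation is that $\ASreg(A) = \Torreg(\Bbbk) + \CMreg(A)$ by Definition~\ref{zzdef0.6}, and that $\Torreg(\Bbbk)$ measures failure of Koszulity while $\CMreg(A)$ encodes the behavior of the dualizing complex. For the implication (a)$\Rightarrow$(b), suppose $A$ is AS regular of type $(d,\bfl)$. Then $A$ has finite global dimension, so in particular $\Bbbk$ has finite projective dimension; I would apply Theorem~\ref{zzthm0.7} with $X = \Bbbk$, which gives $\CMreg(\Bbbk) = \Torreg(\Bbbk) + \CMreg(A)$. Since the right-hand side is exactly $\ASreg(A)$, it suffices to show $\CMreg(\Bbbk) = 0$. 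This last fact should follow from a direct local-cohomology computation: $H^i_{\fm}(\Bbbk) = \Bbbk$ concentrated in degree $i=0$, $j=0$, so $\sup\{i+j\} = 0$.

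**For the harder direction (b)$\Rightarrow$(a), I would argue by leveraging the assumption that $A$ has a balanced dualizing complex together with the AS regularity of a comparison algebra.** The first step is to extract from $\ASreg(A)=0$, i.e. $\Torreg(\Bbbk) = -\CMreg(A)$, strong structural information. Because $A$ has a balanced dualizing complex, the local cohomology $H^i_{\fm}(A)$ is governed by an explicit dual, and $\CMreg(A)$ is finite; this forces $\Torreg(\Bbbk)$ to be finite as well. Finiteness of $\Torreg(\Bbbk)$, combined with the noetherian connected graded hypothesis, is the route to finite global dimension: I would aim to show that $\Bbbk$ has a finite minimal free resolution, from which global dimension finiteness follows by standard connected-graded arguments. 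The delicate point is that finite $\Torreg(\Bbbk)$ does not a priori bound the \emph{projective dimension} of $\Bbbk$; one needs the balanced dualizing complex to close this gap, presumably by invoking a duality that converts the vanishing of $H^i_{\fm}$ in high degree into vanishing of $\Tor^A_i(\Bbbk,\Bbbk)$ for large $i$.

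**Once finite global dimension is established, the remaining AS Gorenstein conditions (a)--(c) of Definition~\ref{zzdef0.2} should fall out of the dualizing complex structure.** The balanced dualizing complex of a noetherian connected graded algebra of finite injective dimension is, up to shift and twist, concentrated so that $\Ext^i_A(\Bbbk, A)$ vanishes except in one homological degree where it is one-dimensional; this is precisely the AS Gorenstein symmetry. I would verify that the value $\ASreg(A)=0$ pins down the single surviving $\Ext$-group to be $\Bbbk(\bfl)$ for the correct index, using \eqref{E0.6.1} as the template for what the regularities must be in the regular case. Finite Gelfand--Kirillov dimension, condition (e), follows from the finite generation hypotheses and the existence of the balanced dualizing complex, which already presupposes good growth behavior.

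**The main obstacle will be the (b)$\Rightarrow$(a) passage from finite Tor-regularity of $\Bbbk$ to finite global dimension of $A$.** In the commutative degree-one setting this is automatic because $\Torreg(\Bbbk)$ is $0$ or $\infty$, but in the noncommutative case $\Torreg(\Bbbk)$ can take any finite value (as noted after Definition~\ref{zzdef0.4}), so finiteness of the regularity does \emph{not} by itself imply a finite resolution. The crux is therefore to show that the duality furnished by the balanced dualizing complex upgrades the finiteness of $\Torreg(\Bbbk)$ to an actual termination of the minimal free resolution of $\Bbbk$; I expect this to be where the bulk of the technical work, and the essential use of the dualizing complex hypothesis, resides.
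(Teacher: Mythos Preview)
Your plan for (a)$\Rightarrow$(b) is fine and essentially equivalent to the paper's: the paper simply cites \eqref{E2.4.2}, while you derive the same identity from Theorem~\ref{zzthm0.7} applied to $X=\Bbbk$. Either works.

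The gap is in (b)$\Rightarrow$(a). You want to pass directly from $\ASreg(A)=0$ to $\pdim_A\Bbbk<\infty$, and you correctly flag this as the crux, but the route you sketch (``duality converts vanishing of $H^i_{\fm}$ in high degree into vanishing of $\Tor^A_i(\Bbbk,\Bbbk)$ for large $i$'') is exactly what the paper carries out in Theorem~\ref{zzthm3.2}, and that argument \emph{requires $A$ to be Cohen--Macaulay}. The proof there controls the syzygies $Z_j(F)$ of $\Bbbk$ by showing they are all $d$-Cohen--Macaulay for $j\geq d$, and this step uses that $\omega=R[-d]$ is a module and that $\Ext^i_A(\Bbbk,\omega)$ vanishes off a single degree. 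Without the Cohen--Macaulay hypothesis the dualizing complex $R$ is a genuine complex, the syzygy argument does not go through, and there is no evident way to force $t^A_{j+1}(\Bbbk)\leq t^A_j(\Bbbk)$ eventually.

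The paper's actual strategy sidesteps this by \emph{not} attacking $\pdim_A\Bbbk$ first. Instead it studies a minimal free resolution $F\to R$ of the balanced dualizing complex itself. From $\ASreg(A)=0$ and Corollary~\ref{zzcor2.6}(3) one gets $\Extreg(X)=\CMreg(X)+c$ for every $X$, and a careful computation pins down $\Ext^i_A(R,\Bbbk)$ to a single internal degree in each $i$. Using a brutal truncation $F^{\geq s}$ and a cone argument (Lemma~\ref{zzlem3.3}), one shows by contradiction that $F^{\leq s-1}$ is acyclic, i.e.\ $\pdim_A R<\infty$. By \cite[Theorem 3.6]{DW} this forces $A$ to be AS Gorenstein, hence Cohen--Macaulay, and \emph{only then} does Theorem~\ref{zzthm3.2} apply to give AS regularity. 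So the missing idea in your proposal is precisely this detour: prove finite projective dimension of $R$, not of $\Bbbk$, to first acquire the Cohen--Macaulay property that your direct argument needs.
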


Another main goal of this paper is to introduce a new notion of regularity,
which we call the {\it concavity} of a graded algebra. If $f: A\to B$ 
is a graded algebra homomorphism between two ${\mathbb N}$-graded
algebras. We say that $f$ is a {\it finite map} if the modules 
$_AB$ and $B_A$ are finitely generated. 

\begin{definition}
\label{zzdef0.9} 
Let ${\mathcal P}$ be a numerical invariant that is defined on 
all AS regular algebras. Let $A$  be a locally finite 
${\mathbb N}$-graded algebra (that is not necessarily connected 
graded). 
\begin{enumerate}
\item[(1)]
The {\it ${\mathcal P}$-concavity} of $A$ is defined to be
$$c_{\mathcal P}(A):=\inf\{ {\mathcal P} (T)
\mid {\text{$T$ is AS regular and there is a finite map $f: T\to A$}}\}.
$$
If no such $T$ exists, we define $c_{\mathcal P}(A)=\infty$.
A similar convention is used in the other parts of this definition.

If ${\mathcal P}$ is a numerical invariant that is defined on 
locally finite $\mathbb{N}$-graded algebras and if $\mathcal{P}(A)$ 
is finite, then we define the {\it normalized ${\mathcal P}$-concavity} 
of $A$ to be
$$c_{{\mathcal P},-}(A):=c_{\mathcal P}(A)-{\mathcal P}(A).$$
Below are some special cases.
\item[(2)]
If we take ${\mathcal P}=\Torreg$, then the 
{\it $\Torreg$-concavity} of $A$ is defined to
be
$$c_{\Torreg}(A):=\inf\{ \Torreg(T) 
\mid {\text{$T$ is AS regular and there is a finite map $f: T\to A$}}\}.
$$
\item[(3)]
If we take ${\mathcal P}=-\CMreg$, then the {\it concavity} 
of $A$ is defined to
$$c(A):= \inf \{ -\CMreg (T) 
\mid {\text{$T$ is AS regular and there is a finite map $f: T\to A$}}\}.
$$
The {\it normalized concavity} of $A$ is defined
to be  
$$c_{-}(A):=c(A)+\CMreg(A).$$
\end{enumerate}
\end{definition}

In this paper we focus primarily on the concavity and the
normalized concavity of the invariant ${\mathcal P}=-\CMreg$. 
By \eqref{E0.6.1}, for any AS regular algebra $T$,
$\CMreg(T) \leq 0$ and so by definition, for any locally finite $\mathbb{N}$-graded algebra $A$, we have, $c(A)\geq 0$. If $S$ is a noetherian connected 
graded AS regular algebra, then $c(S)=0$ if and only if $S$ 
is a Koszul [Theorem~\ref{zzthm0.10}(2)].  Let us explain 
the motivation behind the terminology ``concavity''. By 
analogy with the commutative case, for noncommutative $T$, 
we can imagine $\Spec T$ as a noncommutative space associated 
to $T$. If $T$ is Koszul, then we should consider $\Spec T$ 
to be a flat space, as the minimal free resolution of the 
trivial module is linear \eqref{E0.2.1}. If $T_1\to T_2$ is 
a finite map between two noetherian AS regular algebras (or 
by analogy, if there is a finite map $\Spec T_2\to \Spec T_1$), 
then we can show that $c(T_2)\leq c(T_1)$ and so the concavity 
of $T_2$ is bounded by the concavity of $T_1$. As a consequence, 
if $T_1$ is Koszul, then so is $T_2$ [Theorem~\ref{zzthm0.10}(3)]. 
Hence, in some sense, ``concavity'' measures how far away a 
noncommutative space is from being ``flat''. 

By definition, the concavity of $A$ depends on all finite 
maps $f$ from AS regular algebras $T$ to $A$. However, when $A$ 
itself is AS regular, this invariant can be calculated by taking 
$f$ to be the identity map, as shown in the following theorem,
which can be considered as an extension of Theorem~\ref{zzthm0.3}
in the case that $A=S$ is AS regular. 

\begin{theorem}
\label{zzthm0.10} 
Let $T$ and $S$ be noetherian AS regular algebras. 
\begin{enumerate}
\item[(1)] $c(S)=-\CMreg(S).$
\item[(2)]
$S$ is Koszul if and only if $c(S)=0$.
\item[(3)]
Suppose $f:T\to S$ is a finite map. Then $c(T)\geq c(S)\geq 0$. 
If further $c(T)=c(S)$, then $f$ is surjective and $_TS$ and $S_T$ 
are linear Cohen--Macaulay modules over $T$.
\end{enumerate}
\end{theorem}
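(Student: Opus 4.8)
I will prove the three parts of Theorem~\ref{zzthm0.10} in order, leveraging the machinery already assembled in the paper.

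For part (1), I would start from the definition
$$c(S)=\inf_{\text{AS reg } T}\{-\CMreg(T)\mid \text{there is a finite map } f:T\to S\}.$$
Since $S$ is itself noetherian AS regular, the identity map $\id:S\to S$ is a finite map, so $-\CMreg(S)$ is one of the values in the infimum, giving $c(S)\le -\CMreg(S)$. For the reverse inequality, I would use the machinery of part (3): if $f:T\to S$ is any finite map from an AS regular algebra $T$, then I claim $-\CMreg(T)\ge -\CMreg(S)$, i.e.\ $\CMreg(S)\ge \CMreg(T)$. This comparison is exactly what I expect to extract from the relationship between regularities along a finite map, and it shows every competitor in the infimum is at least $-\CMreg(S)$, so $c(S)\ge -\CMreg(S)$. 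Combining the two bounds gives the equality.

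**The Koszul characterization.** For part (2), I would combine part (1) with the formula \eqref{E0.6.1}. By part~(1), $c(S)=-\CMreg(S)$, so $c(S)=0$ iff $\CMreg(S)=0$. Since $S$ is AS regular of some type $(d,\bfl)$, equation \eqref{E0.6.1} gives $\CMreg(S)=-\Torreg(\Bbbk)$. Hence $c(S)=0$ iff $\Torreg({}_S\Bbbk)=0$, which by the discussion following Definition~\ref{zzdef0.4} is precisely the statement that $S$ is Koszul. (I should double-check that $\CMreg(S)\le 0$ always holds for AS regular $S$, so that $c(S)=0$ is the genuine minimum; this follows from $c(S)\ge 0$ established in the text together with part~(1).)

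**The comparison inequality and rigidity.** The heart of the theorem is part (3), and this is where I expect the main obstacle. Given a finite map $f:T\to S$ between AS regular algebras, $c(T)\ge c(S)$ should follow from transitivity of finite maps: any finite map $g:T'\to T$ composes with $f$ to give a finite map $f\circ g:T'\to S$, so every competitor for $c(T)$ is also a competitor for $c(S)$, forcing $c(T)\ge c(S)$; and $c(S)\ge 0$ is already known. The delicate part is the rigidity statement: assuming $c(T)=c(S)$, I must show $f$ is surjective and that ${}_TS$ and $S_T$ are linear Cohen--Macaulay $T$-modules. By part~(1) applied to both, $c(T)=c(S)$ translates to $\CMreg(T)=\CMreg(S)$. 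I would then invoke Theorem~\ref{zzthm0.7} (in the form $\CMreg(X)=\Torreg(X)+\CMreg(A)$) applied to $X={}_TS$ as a $T$-module: since $S$ is AS regular and the map is finite, ${}_TS$ should have finite projective dimension over $T$, and the equality of CM regularities will force $\Torreg({}_TS)=0$, i.e.\ ${}_TS$ is linear. The Cohen--Macaulay property and surjectivity of $f$ should then emerge from tracking degrees and Hilbert series through the finite map, comparing the top local cohomology of $S$ as a $T$-module against that of $S$ over itself. The main technical hurdle will be rigorously establishing that ${}_TS$ has finite projective dimension over $T$ and that the degree-shift bookkeeping forces surjectivity; I anticipate needing the duality between left and right modules afforded by the balanced dualizing complex, applied symmetrically to obtain the statement for $S_T$.
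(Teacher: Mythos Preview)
Your overall strategy matches the paper's proof almost exactly: part~(1) via the identity map plus Theorem~\ref{zzthm0.7}, part~(2) via \eqref{E0.6.1}, and part~(3) via transitivity of finite maps together with Theorem~\ref{zzthm0.7} applied to ${}_TS$. The key identity you implicitly use, $\CMreg({}_TS)=\CMreg({}_SS)$, holds because local cohomology over $T$ and over $S$ agree when $f$ is finite; the paper uses this without further comment.

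Where your proposal diverges is in overestimating the difficulty of the endgame in part~(3). First, the finite projective dimension of ${}_TS$ is not a hurdle: $T$ is AS regular, hence has finite global dimension, so \emph{every} finitely generated $T$-module has finite projective dimension. (Your stated reason, that $S$ is AS regular, is not the relevant one.) Second, once $\Torreg({}_TS)=0$ you do not need Hilbert series or dualizing-complex gymnastics: linearity forces $\Bbbk\otimes_T S$ to sit in degree~$0$, so ${}_TS$ is generated in degree~$0$, and since $S_0=\Bbbk$ is one-dimensional the map $f$ must be surjective (this is exactly the argument in the proof of Lemma~\ref{zzlem4.3}(2)). Third, the Cohen--Macaulay claim is immediate: the local cohomology $H^i_{\fm}({}_TS)$ agrees with $H^i_{\fm}({}_SS)$, and $S$ being AS regular is Cohen--Macaulay, so only one $H^i_{\fm}$ survives. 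The right-module statements follow by symmetry. So your anticipated ``main technical hurdle'' dissolves once you notice these three points.
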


As a corollary to the above results,
we see that if there is a finite map between AS regular algebras
$f: T \to S$, then the Koszul property of $S$ is controlled by the
Koszul property of $T$.
\begin{corollary}[Corollary~\ref{zzcor4.4}]
Let $T$ and $S$ be noetherian AS regular algebras and
let $f:T\to S$ be a finite map.
\begin{enumerate}
\item[(1)]
If $T$ is Koszul {\rm{(}}or equivalently, $\Torreg(_T\Bbbk)=0${\rm{)}},
then $S$ is Koszul. Further, $f$ is surjective and $_TS$ and $S_T$ are linear Cohen--Macaulay modules over $T$.
\item[(2)]
Suppose $\Torreg(_T\Bbbk)\leq \deg (\Bbbk\otimes_T S)$.
Then $S$ is Koszul.
\item[(3)]
Suppose $\Torreg(_T\Bbbk)=1$. If $f$ is not surjective,
then $S$ is Koszul. As a consequence if $T$ is a proper
subalgebra of $S$, then $S$ is Koszul.
\end{enumerate}
\end{corollary}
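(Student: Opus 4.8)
The plan is to reduce all three parts to the single numerical inequality
$$\Torreg({}_T\Bbbk)\ \geq\ \Torreg({}_S\Bbbk)+\deg(\Bbbk\otimes_T S),$$
and then read each part off from it. Throughout I abbreviate $r_T=\Torreg({}_T\Bbbk)$, $r_S=\Torreg({}_S\Bbbk)$ and $\delta=\deg(\Bbbk\otimes_T S)$. Since $S$ and $T$ are noetherian AS regular they have balanced dualizing complexes, and by \eqref{E0.6.1} together with Theorem~\ref{zzthm0.10}(1) we have $c(T)=r_T=-\CMreg(T)$ and $c(S)=r_S=-\CMreg(S)$, both finite nonnegative integers; moreover $S$ is Koszul exactly when $r_S=0$ by Theorem~\ref{zzthm0.10}(2). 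Part~(1) is then immediate from Theorem~\ref{zzthm0.10}(3): if $T$ is Koszul then $c(T)=0$, so $0=c(T)\geq c(S)\geq 0$ forces $c(S)=0$ and hence the equality $c(T)=c(S)$, whereupon Theorem~\ref{zzthm0.10}(3) yields that $S$ is Koszul, that $f$ is surjective, and that ${}_TS$ and $S_T$ are linear Cohen--Macaulay $T$-modules.

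To establish the inequality I would compute $\Torreg_T({}_TS)$ in two ways. First, ${}_TS$ is a finitely generated graded $T$-module (as $f$ is finite) of finite projective dimension (as $T$ has finite global dimension), so it is a nonzero object of $\D^{\bb}_{\fg}(T\Gr)$ to which Theorem~\ref{zzthm0.7} applies over $T$, giving
$$\CMreg_T({}_TS)=\Torreg_T({}_TS)+\CMreg(T).$$
Next I claim $\CMreg_T({}_TS)=\CMreg(S)$. Because $S$ is module-finite over $T$, the quotient $S/f(\fm_T)S$ is finite-dimensional, so the $\fm_T$-adic and $\fm_S$-adic topologies on $S$ coincide; the associated torsion functors and their derived functors therefore agree, whence $H^i_{\fm_T}({}_TS)\cong H^i_{\fm_S}(S)$ as graded modules for all $i$, and taking suprema of $j+i$ over nonzero components gives the claimed equality of CM regularities. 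Substituting $\CMreg(T)=-r_T$ and $\CMreg(S)=-r_S$ from \eqref{E0.6.1} yields $\Torreg_T({}_TS)=r_T-r_S$. Finally, the degree-zero term $\Tor^T_0(\Bbbk,S)=\Bbbk\otimes_T S$ is nonzero in its top degree $\delta$, contributing $j-i=\delta$ to the supremum defining $\Torreg_T({}_TS)$; hence $\Torreg_T({}_TS)\geq\delta$, and combining gives $r_T-r_S\geq\delta$, the displayed inequality. (As a sanity check, this refines the monotonicity $c(T)\geq c(S)$ of Theorem~\ref{zzthm0.10}(3), since $\delta\geq 0$.)

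With the inequality in hand the remaining parts are short. For~(2), the hypothesis $r_T\leq\delta$ gives $\delta\geq r_T\geq r_S+\delta$, so $r_S\leq 0$; as $r_S\geq 0$ we conclude $r_S=0$ and $S$ is Koszul. For~(3), if $f$ is not surjective then $S\neq f(T)=T\cdot 1$, so $S$ is not generated by $S_0$ as a left $T$-module and $\Bbbk\otimes_T S$ has a nonzero component in positive degree, i.e.\ $\delta\geq 1$; with $r_T=1$ the inequality reads $1\geq r_S+1$, forcing $r_S=0$ and Koszulity of $S$. (Equivalently, this is the contrapositive of the equality clause in Theorem~\ref{zzthm0.10}(3): non-surjectivity of $f$ forces $c(T)>c(S)$, and $c(T)=1$ leaves only $c(S)=0$.) The final assertion is the special case in which $f$ is the inclusion of a proper subalgebra $T\subsetneq S$, which is not surjective.

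I expect the main obstacle to be the identification $\CMreg_T({}_TS)=\CMreg(S)$, that is, verifying that local cohomology of $S$ computed over $T$ with respect to $\fm_T$ agrees, as a graded object, with the local cohomology of $S$ over itself. Once module-finiteness is used to see that $f(\fm_T)S$ is $\fm_S$-primary and the two torsion theories are matched, everything else is bookkeeping with the definitions of $\Torreg$ and $\CMreg$, Theorem~\ref{zzthm0.7}, and the identity \eqref{E0.6.1}.
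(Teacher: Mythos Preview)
Your proof is correct and follows essentially the same route as the paper: part~(1) is deduced from Theorem~\ref{zzthm0.10}(3) via the squeeze $0=c(T)\geq c(S)\geq 0$, and parts~(2)--(3) come from the identity $\Torreg_T({}_TS)=r_T-r_S$ (obtained from Theorem~\ref{zzthm0.7} together with $\CMreg_T({}_TS)=\CMreg(S)$) combined with the trivial bound $\Torreg_T({}_TS)\geq\delta$. The paper records the equality $\CMreg(S)=\CMreg({}_TS)$ without comment here (it is noted in the proof of Theorem~\ref{zzthm0.10}(3)); your justification via the coincidence of the $\fm_T$- and $\fm_S$-torsion theories is the standard one and is exactly what is needed.
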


We also show that (normalized) concavity
is related to the AS regular property.

\begin{theorem}
\label{zzthm0.12} 
Let $A$ be a noetherian connected graded algebra with balanced 
dualizing complex. Let $c_{-}(A)$ be the normalized concavity 
defined in Definition {\rm{\ref{zzdef0.9}(3)}}. Then the following
hold.
\begin{enumerate}
\item[(1)]
$c_{-}(A) \geq 0$.
\item[(2)]
$c_{-}(A)=0$ if and only if $A$ is AS regular.
\end{enumerate}
\end{theorem}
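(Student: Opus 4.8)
The plan is to rewrite the normalized concavity as an infimum of Tor-regularities and then read off both statements from Theorems~\ref{zzthm0.7} and~\ref{zzthm0.8}. Fix a finite map $f\colon T\to A$ with $T$ AS regular. Being noetherian AS regular, $T$ admits a balanced dualizing complex, and since $\gldim T<\infty$ the module ${}_{T}A$ lies in $\D^{\bb}_{\fg}(T\Gr)$ and has finite projective dimension over $T$. As $f$ is finite there is an integer $m$ with $\fm_A^m\subseteq \fm_T A\subseteq \fm_A$, so the $\fm_T$- and $\fm_A$-torsion functors agree on graded $A$-modules; hence $H^{i}_{\fm}(A)$ is the same computed over $T$ or over $A$, and the CM regularity of ${}_{T}A$ taken in $T\Gr$ equals $\CMreg(A)$. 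Applying Theorem~\ref{zzthm0.7} over the algebra $T$ to $X={}_{T}A$ then gives $\CMreg(A)=\Torreg_{T}({}_{T}A)+\CMreg(T)$, where the subscript indicates that the Tor-regularity is computed over $T$. Unwinding Definition~\ref{zzdef0.9}(4) yields
$$c_{-}(A)=\inf_{\text{AS reg }T}\{\,\Torreg_{T}({}_{T}A)\mid\text{there is a finite map }f\colon T\to A\,\}.$$
Now $\Tor^{T}_{0}(\Bbbk,A)_{0}=(A/\fm_T A)_{0}=\Bbbk\neq 0$ forces $\Torreg_{T}({}_{T}A)\geq 0$, which is (1). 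If $A$ is AS regular we may take $T=A$ and $f=\id$, so that $\Torreg_{A}({}_{A}A)=0$ and $c_{-}(A)=0$; this is the ``if'' direction of (2).

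For the ``only if'' direction, suppose $c_{-}(A)=0$. Each $\Torreg_{T}({}_{T}A)$ is a nonnegative integer (finite since ${}_{T}A$ has finite projective dimension over the regular algebra $T$), so the infimum is attained: there are an AS regular algebra $T_0$ and a finite map $f\colon T_0\to A$ with $\Torreg_{T_0}({}_{T_0}A)=0$. Because $T_0$ is generated in positive degrees, minimality forces $\Tor^{T_0}_{q}(\Bbbk,A)$ into internal degrees $\geq q$, so $\Torreg_{T_0}({}_{T_0}A)=0$ means each $\Tor^{T_0}_{q}(\Bbbk,A)$ is concentrated in internal degree exactly $q$; that is, ${}_{T_0}A$ has a minimal linear $T_0$-resolution. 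Taking $q=0$, $A/\fm_{T_0}A$ is concentrated in degree $0$, so by graded Nakayama $f$ is surjective and $A=T_0/I$ for a graded ideal $I$. Writing $r:=\Torreg_{T_0}({}_{T_0}\Bbbk)$, formula \eqref{E0.6.1} gives $\CMreg(T_0)=-r$, and the displayed expression for $c_{-}$ gives $\CMreg(A)=\CMreg(T_0)=-r$.

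It remains to show $\ASreg(A)=0$. By Corollary~\ref{zzcor2.6}(2), $\ASreg(A)=\Torreg_{A}({}_{A}\Bbbk)+\CMreg(A)\geq 0$, i.e.\ $\Torreg_{A}({}_{A}\Bbbk)\geq r$. For the reverse inequality I would feed the linearity of ${}_{T_0}A$ into the change-of-rings spectral sequence of $f\colon T_0\to A$,
$$E^{2}_{p,q}=\Tor^{A}_{p}\bigl(\Tor^{T_0}_{q}(\Bbbk,A),\Bbbk\bigr)\ \Longrightarrow\ \Tor^{T_0}_{p+q}(\Bbbk,\Bbbk).$$
Since $\Tor^{T_0}_{q}(\Bbbk,A)$ is concentrated in internal degree $q$, it is a trivial graded right $A$-module $\Bbbk(-q)^{\beta_q}$ with $\beta_0=1$ and $\beta_q=0$ for $q>\gldim T_0$, whence $E^{2}_{p,q}\cong\Tor^{A}_{p}(\Bbbk,\Bbbk)(-q)^{\beta_q}$, which in internal degree $j$ is $\Tor^{A}_{p}(\Bbbk,\Bbbk)_{j-q}^{\beta_q}$. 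Suppose $\Torreg_{A}({}_{A}\Bbbk)>r$, and let $p_0$ be minimal such that $\Tor^{A}_{p_0}(\Bbbk,\Bbbk)_{j'}\neq 0$ for some $j'$ with $j'-p_0>r$. On the bottom row ($q=0$, where $\beta_0=1$) this class sits in $E^{2}_{p_0,0}$. No differentials enter the bottom row, and every outgoing $d_s\colon E^{s}_{p_0,0}\to E^{s}_{p_0-s,\,s-1}$ preserves $j'$ and lands on a class whose underlying $\Tor^{A}_{p_0-s}(\Bbbk,\Bbbk)$ term has slope $j'-p_0+1>r$ at homological degree $p_0-s<p_0$, hence vanishes by minimality of $p_0$. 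Thus the class survives to $E^{\infty}_{p_0,0}$ and appears in $\Tor^{T_0}_{p_0}(\Bbbk,\Bbbk)_{j'}$, forcing $j'-p_0\leq r$, a contradiction. So $\Torreg_{A}({}_{A}\Bbbk)=r$, $\ASreg(A)=r+(-r)=0$, and $A$ is AS regular by Theorem~\ref{zzthm0.8}.

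The main obstacle is this last inequality $\Torreg_{A}({}_{A}\Bbbk)\leq r$. The delicate point is that one does not yet know $A$ has finite global dimension, so the spectral sequence is supported in infinitely many homological degrees and a naive ``top class survives'' argument fails; choosing $p_0$ minimal among homological degrees carrying a class of slope $>r$ is exactly what makes the bottom-row survival argument conclusive, and it is here that the linearity of ${}_{T_0}A$ (concentration of $\Tor^{T_0}_{q}(\Bbbk,A)$ in degree $q$, producing trivial-module $E^2$-terms) and the boundedness of the slope of the abutment $\Tor^{T_0}(\Bbbk,\Bbbk)$ enter. A secondary technical point, needed for the reduction, is the invariance of local cohomology under the finite map $f$; both are most cleanly isolated as separate lemmas.
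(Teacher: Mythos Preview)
Your proof is correct and follows essentially the same strategy as the paper's: use Theorem~\ref{zzthm0.7} to rewrite $c_{-}(A)$ as $\inf_{T}\Torreg_T({}_TA)$, deduce (1) and the ``if'' direction of (2) immediately, and for the ``only if'' direction pick $T_0$ achieving $\Torreg_{T_0}({}_{T_0}A)=0$, establish $\Torreg_A({}_A\Bbbk)\leq\Torreg_{T_0}({}_{T_0}\Bbbk)$, conclude $\ASreg(A)=0$, and invoke Theorem~\ref{zzthm0.8}.

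The only substantive difference is in how the key inequality $\Torreg_A({}_A\Bbbk)\leq\Torreg_{T_0}({}_{T_0}\Bbbk)$ is obtained. The paper packages this as Lemma~\ref{zzlem4.3}(5), whose proof (parts (3) and (4)) is an induction on $s$ using the non-decreasing property \eqref{E4.3.4} of $t^{T_0}_i(\Bbbk)-i$ to control each $t^A_s(\Bbbk)$ individually; this yields the stronger conclusion $\Torreg_A({}_A\Bbbk)=\Torreg_{T_0}({}_{T_0}\Bbbk)$. Your ``minimal $p_0$ on the bottom row'' argument is a cleaner route to the inequality alone: it avoids \eqref{E4.3.4} entirely and works because the outgoing differentials from $E^s_{p_0,0}$ land at homological degree $<p_0$ with strictly larger slope $j'-p_0+1$, which minimality kills. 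So your argument is shorter for what is needed here, while the paper's lemma proves more and is reused elsewhere (e.g.\ Corollary~\ref{zzcor4.7}). Your remarks on the invariance of local cohomology under the finite map and on why the infimum is attained are handled the same way in the paper (the former is stated without proof in the proof of Theorem~\ref{zzthm0.10}(3); the latter is the sentence ``by definition and the fact that $c(A)$ is an integer'').
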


By Theorem~\ref{zzthm0.12}, $c_{-}(A)$ can be viewed as 
an invariant, similar to 
$\ASreg(A)$, that measures how far away $A$ is from being AS 
regular. Here is a list of numerical invariants that 
qualify as indicators of the AS regular property for 
different special classes of algebras:
\begin{enumerate}
\item[(i)] 
$\deg_t h_A(t)$ [Theorem~\ref{zzthm0.3}],
\item[(ii)] 
$\ASreg(A)$ [Theorem~\ref{zzthm0.8}],
\item[(iii)] 
$c_{-}(A)$ [Theorem~\ref{zzthm0.12}], 
\item[(iv)] 
$\CMreg(A)$ [Theorem~\ref{zzthm4.5}], and 
\item[(v)]
$\Torreg(_TA)$ [Remark~\ref{zzrem5.10}].
\end{enumerate}

\begin{remark}
\label{zzrem0.13}
As noted in \cite[p.506]{St1} (or \cite[p.256]{KKZ2}) we have
the following hierarchy of homological properties for commutative
graded algebras
$${\text{regular}}
\Longrightarrow
{\text{hypersurface}}
\Longrightarrow
{\text{complete intersection}}
$$
$$\qquad\qquad\Longrightarrow
\; {\text{Gorenstein}} \; 
\Longrightarrow \quad
{\text{Cohen--Macaulay}}.\quad$$
In this paper, we mainly study the ``regular'' property. It would be very interesting if 
new numerical (and hopefully computable) invariants 
could be found to 
detect or characterize the other properties in the above diagram. 
Of course Stanley's Theorem~\ref{zzthm0.1} 
for the Gorenstein property is our model. Some necessary 
conditions to be a ``noncommutative complete intersection'' are given in 
\cite{KKZ2}. 
\end{remark}

To conclude the introduction, we mention one application 
of concavity to 
noncommutative invariant theory. In \cite{KWZ}, the authors used 
$\CMreg$ to bound the degrees of generators of invariant subrings 
$T^H$ when a semisimple Hopf algebra $H$ acts on an AS regular algebra $T$ homogeneously. 
The following proposition is an easy consequence of 
\cite[Theorem 0.8]{KWZ}. For any connected graded algebra $A$, let
$$\beta_i(A)=\deg \Tor^A_i(\Bbbk,\Bbbk).$$
Then $\beta_1(A)$ is the largest degree of an element in a 
minimal generating set of $A$ and $\beta_2(A)$ is the largest 
degree of an element in a minimal relation set of $A$. We have the following 
lower bounds for $c(T^H)$ in terms of the degrees of the generators 
and the relations of $T^H$. 

\begin{proposition}
\label{zzpro0.14}
Let $H$ be a semisimple Hopf algebra acting on a noetherian
AS regular algebra $T$ homogeneously. Let $R = T^H$ denote the 
invariant subring of this action. Then the following 
hold:
\begin{enumerate}
\item[(1)]
$c(R)\geq \beta_1(R)-1$, and
\item[(2)]
$c(R)\geq \min\left\{\frac{1}{2} \beta_2(R)-\CMreg(T),
\frac{1}{2}(\beta_2(R)-\CMreg(T)-1),\beta_2(R)-2\right\}$.
\end{enumerate}
\end{proposition}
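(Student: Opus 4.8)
The plan is to argue directly from the definition of concavity together with the identity $-\CMreg(S)=\Torreg({}_S\Bbbk)$, valid for every AS regular $S$ by \eqref{E0.6.1}. By Definition~\ref{zzdef0.9}(3), $c(R)=\inf_S\{-\CMreg(S)\}=\inf_S\{\Torreg({}_S\Bbbk)\}$, the infimum taken over AS regular algebras $S$ admitting a finite map $f\colon S\to R$; so it suffices to bound $\Torreg({}_S\Bbbk)$ below, uniformly in $S$, by the right-hand sides of (1) and (2), and then take the infimum. I first record a regularity comparison that holds because $H$ is semisimple: there is a graded $R$-bimodule projection $T\to R$ (the Reynolds operator), so $T=R\oplus C$ as graded $R$-bimodules, and since local cohomology is additive and that of the finite $R$-module $T$ is intrinsic to $T$, we get $\CMreg(R)\le\CMreg(T)=-\Torreg({}_T\Bbbk)\le0$ by \eqref{E0.6.1}.

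First I would fix an admissible $S$ and prove (1). Put $\delta:=\deg(\Bbbk\otimes_S R)$, the top degree of a minimal $S$-module generator of $R$. Since $R$ is generated over $f(S)$ by such module generators, every minimal algebra generator of $R$ has degree at most $\max\{\beta_1(S),\delta\}$, so $\beta_1(R)\le\max\{\beta_1(S),\delta\}$. From Definition~\ref{zzdef0.4} we have $\beta_1(S)-1\le\Torreg({}_S\Bbbk)$. Moreover $\gldim S<\infty$, so the finite $S$-module $R$ has finite projective dimension and Theorem~\ref{zzthm0.7}, applied over $S$, gives $\CMreg(R)=\Torreg({}_S R)+\CMreg(S)$ (the local cohomology of the finite $S$-module $R$ being the same over $S$ and over $R$), i.e. $\Torreg({}_S R)=\CMreg(R)+\Torreg({}_S\Bbbk)$; since $\delta\le\Torreg({}_S R)$ and $\CMreg(R)\le0$, this yields $\delta\le\Torreg({}_S\Bbbk)$. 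In either branch of the maximum we conclude $\beta_1(R)-1\le\Torreg({}_S\Bbbk)=-\CMreg(S)$, and taking the infimum over $S$ proves (1).

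For (2) I would run the same comparison one homological degree higher, classifying a minimal relation of $R$ of top degree $\beta_2(R)$ according to its origin in the presentation of $R$ over $S$. A relation inherited from $S$ forces $\beta_2(R)\le\beta_2(S)$, whence $\beta_2(R)-2\le\Torreg({}_S\Bbbk)$. A relation arising from two $S$-module generators forces $\beta_2(R)\le2\delta$, and combined with $\delta\le\CMreg(T)+\Torreg({}_S\Bbbk)$ (from the previous paragraph and $\CMreg(R)\le\CMreg(T)$) this gives $\Torreg({}_S\Bbbk)\ge\tfrac12\beta_2(R)-\CMreg(T)$. A relation arising from one module generator and one algebra generator forces $\beta_2(R)\le\delta+\beta_1(S)\le\bigl(\CMreg(T)+\Torreg({}_S\Bbbk)\bigr)+\bigl(\Torreg({}_S\Bbbk)+1\bigr)$, which rearranges to $\Torreg({}_S\Bbbk)\ge\tfrac12\bigl(\beta_2(R)-\CMreg(T)-1\bigr)$. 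As the top relation lies in at least one of these cases, $\Torreg({}_S\Bbbk)$ is bounded below by the minimum of the three quantities; taking the infimum over $S$ proves (2).

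The hard part will be making the relation trichotomy rigorous. Unlike the generator estimate, relations among a mixed generating set of $R$---images of algebra generators of $S$ together with $S$-module generators---interact through the module structure of ${}_SR$ and through $\ker f$, and one must verify that a minimal top-degree relation really does take one of the three forms above and satisfies the stated degree bound; this structural control on $T^H$ is precisely what \cite[Theorem~0.8]{KWZ} supplies. Confirming that the three cases are exhaustive, and bookkeeping the parity refinement that produces the term $\tfrac12(\beta_2(R)-\CMreg(T)-1)$, is the only genuinely delicate step, after which (1) and the regularity comparison are routine.
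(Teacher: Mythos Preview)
Your proposal is correct and follows essentially the same route as the paper: both arguments reduce to the degree bounds of \cite[Theorem 0.8]{KWZ} for $\beta_1(R)$ and $\beta_2(R)$ in terms of $\beta_i(S)$ and $\CMreg(T)-\CMreg(S)$, and then bound each branch of the resulting maximum by $-\CMreg(S)+{}$constant using $\beta_i(S)\le\Torreg({}_S\Bbbk)+i$ and Theorem~\ref{zzthm0.7}. The only variation is that for (1) you rederive the generator bound $\beta_1(R)\le\max\{\beta_1(S),\delta\}$ directly (and sharpen it via $\CMreg(R)\le\CMreg(T)$ from the Reynolds splitting) rather than citing \cite[Theorem 0.8(1)]{KWZ}; for (2) your heuristic trichotomy matches the three terms in \cite[Theorem 0.8(2)]{KWZ}, to which both you and the paper defer for the rigorous version.
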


By \cite[Lemma 3.2(b)]{KKZ1}, $T^H$ is Cohen--Macaulay with
balanced dualizing complex. Now if we are given a noetherian 
Cohen--Macaulay domain $A$ with balanced dualizing complex 
(or even a noetherian AS Gorenstein domain), it is generally 
very difficult to determine if there exist $(T,H)$ such that 
$A$ is isomorphic to $T^H$. The simple inequality in 
Proposition~\ref{zzpro0.14}(1) 
provides an easy way of showing that a noetherian connected 
graded algebra $A$ cannot be isomorphic to any invariant subring 
$T^H$ of an AS regular algebra $T$ under a semisimple Hopf 
algebra $H$ action (if $c(A)< \beta_1(A)-1$). For
example, let 
$$A=\Bbbk[x_1,\cdots,x_n][t]/(t^2=f(x_1,\cdots,x_n))$$
where $\deg x_i=1$, $\deg t\geq 2$, and $f$ is an irreducible
homogeneous polynomial in $x_i$ of degree $(2\deg t)$. It follows
from the definition that
$$0=c(A) < 1 \leq \deg t -1=\beta_1(A)-1.$$ 
Therefore $A$ cannot be isomorphic to $T^H$ by Proposition 
\ref{zzpro0.14}(1). See Example~\ref{zzex4.2} for further examples.

The paper is organized as follows.  Section 1 recalls some basic 
definitions and properties from homological algebra (including local 
cohomology).  Section 2 gives the definitions of invariants such as
Castelnuovo--Mumford regularity, $\Ext$- or Tor-regularity
and basic inequalities and equalities relating these regularities.
Theorem~\ref{zzthm0.7} is proved in Section 2. In Section 3 we 
define AS regularity and prove Theorem~\ref{zzthm0.8}.
Section 4 concerns concavity associated to the Castelnuovo--Mumford 
regularity. We prove Theorems~\ref{zzthm0.10}, \ref{zzthm0.12} and 
Proposition~\ref{zzpro0.14} on concavity in Section 4.
Finally Section 5 contains some examples, questions and remarks.

\section{Preliminaries}
\label{zzsec1}
For an $\mathbb{N}$-graded $\Bbbk$-algebra $A$, we let $A \Gr$ 
denote the category of $\mathbb{Z}$-graded left $A$-modules.
When convenient, we identify the graded right 
$A$-modules with graded left $A^{\op}$-modules
and denote the category $A^{\op}\Gr$. The derived category of 
complexes of graded $A$-modules is denoted $\D(A \Gr)$. We use 
the standard notation $\D^+(A \Gr)$, $\D^-(A \Gr)$, and 
$\D^{\bb}(A \Gr)$ for the full subcategories of complexes which 
are bounded below, bounded above, and bounded, respectively.
We use the subscript $\fg$ to denote the full subcategories 
consisting of complexes with finitely generated cohomology, e.g., 
$\D^{\bb}_{\fg}(A \Gr)$. We use the standard convention that a 
left $A$-module $M$ can be viewed as complex concentrated in 
position $0$. 

Let $\ell$ be an integer. For a graded $A$-module $M$, the 
shifted $A$-module $M(\ell)$ is defined by
\[ M(\ell)_m = M_{m + \ell}
\]
for all $m \in \mathbb{Z}$. 
For a cochain complex $X = (X^n, d^n: X^n \to X^{n+1})$,
we define two notions of shifting: $X(\ell)$ shifts 
the degrees of each graded vector space 
$X^i(\ell)_m = X^i_{m+\ell}$ for all $i, m \in \mathbb{Z}$
and $X[\ell]$ shifts the 
complex $X^i[\ell] = X^{i+\ell}$ for all $i \in \mathbb{Z}$.

\begin{definition}
\label{zzdef1.1}
Let $A:=\bigoplus_{i\geq 0} A_i$ be an ${\mathbb N}$-graded 
locally finite algebra. The {\it Hilbert series of $A$} is 
defined to be
\begin{equation*}
h_A(t)=\sum_{i\in {\mathbb N}} (\dim_{\Bbbk} A_i)t^i.
\end{equation*}
Similarly, if $M = \bigoplus_{i \in \mathbb{Z}} M_i$ is a 
$\mathbb{Z}$-graded $A$-module (or $\mathbb{Z}$-graded 
vector space), the \emph{Hilbert series of $M$} is defined to 
be
\begin{equation*}
h_M(t)=\sum_{i\in \mathbb{Z}} (\dim_{\Bbbk} M_i)t^i.
\end{equation*}
\end{definition}

We say that $M$ is {\em locally finite} 
if $\dim_{\Bbbk} M_d<\infty$ for all $d\in {\mathbb Z}$. 
Define the {\it degree} of $M$ to be the 
maximal degree of the nonzero homogeneous elements in $M$, namely, 
\begin{equation}
\label{E1.0.1}\tag{E1.0.1}
\deg(M)=\inf\{d\mid  (M)_{\geq d}= 0\}-1=
\sup\{d\mid  (M)_{d}\neq 0\} \quad \in \quad  
{\mathbb Z} \cup\{\pm \infty\}.
\end{equation}
By convention, we define $\deg (0)=-\infty$. Similarly, we define
\begin{equation}
\label{E1.0.2}\tag{E1.0.2}
\ged(M)=\sup\{d\mid  (M)_{\leq d}= 0\}+1=
\inf\{d\mid  (M)_{d}\neq 0\} \quad \in \quad  
{\mathbb Z} \cup\{\pm \infty\}.
\end{equation}
By convention, we define $\ged (0)=\infty$.

For a nonzero cochain complex $X$ in $\D(A\Gr)$, 
the \emph{degree} of $X$ is defined to be
\begin{equation}
\label{E1.0.3}\tag{E1.0.3}
\deg(X) =\sup_{m,n \in \mathbb{Z}} \{m+n \mid   
H^n(X)_m\neq 0\}=\sup_{n \in \mathbb{Z}} \{\deg   
H^n(X)+n\}
\end{equation}
where $H^d(X)$ is the $d$th homology of the complex $X$.
Similarly, the \emph{$\ged$} of $X$ is defined to be
\begin{equation}
\label{E1.0.4}\tag{E1.0.4}
\ged(X) =\inf_{m,n \in \mathbb{Z}}
\{m+n \mid H^n(X)_m\neq 0\}
=\inf_{n \in \mathbb{Z}} \{\ged  
H^n(X)+n\}.
\end{equation}
We also define
\[
\sup (X)=\sup\{d \mid H^d(X)\neq 0\}
\]
and
\[
\inf (X)=\inf\{d \mid H^d(X)\neq 0\}.
\]

Let $A$ be a connected graded algebra. Recall that $\fm$ 
denotes the graded Jacobson radical (or maximal graded ideal) 
$A_{\geq 1}$ and that $\Bbbk$ denotes the graded $A$-bimodule 
$A/\fm$. For a graded left $A$-module $M$, let
\begin{equation}
\label{E1.0.5}\tag{E1.0.5}
t^A_i(_A M)=\deg \Tor^A_i(\Bbbk, M).
\end{equation}
If $M$ is a graded right $A$-module, let
\begin{equation}
\label{E1.0.6}\tag{E1.0.6}
t^A_i(M_A)=\deg \Tor^A_i(M,\Bbbk).
\end{equation}
It is clear that $t^A_i(_A \Bbbk)=t^A_i(\Bbbk_A)$.
If the context is clear, we will use $t^A_i(M)$
instead of $t^A_i(_A M)$ (or $t^A_i(M_A)$). 

For each graded left $A$-module $M$, we define
$$\Gamma_{\fm}(M) 
=\{ x\in M\mid A_{\geq n} x=0 \; {\text{for some $n\geq 1$}}\;\}
=\lim_{n\to \infty} \Hom_A(A/A_{\geq n}, M)$$
and call this the {\it $\fm$-torsion submodule} of $M$. It 
is standard that the functor $\Gamma_{\fm}(-)$ is a left 
exact functor  $A\Gr \to A\Gr$. 
Since this category has enough injectives, the $i$th right 
derived functors, denoted by $H^i_{\fm}$ or $\R^i\Gamma_{\fm}$, 
are defined and called the {\it local cohomology functors}, see 
\cite{AZ, VdB, Jo1, Jo2} for more details. For a complex $X$ of graded
left $A$-modules, the $i$th local cohomology group of $X$ 
is given in \eqref{E0.4.2}. For example, if $M$ is a graded 
left $A$-module, then 
\begin{equation}
\label{E1.0.7}\tag{E1.0.7}
H^i_{\fm}(M)=\R^i\Gamma_{\fm}(M)
:=\lim_{n\to \infty} \Ext^i_A(A/A_{\geq n}, M).
\end{equation}

\begin{definition}
\label{zzdef1.2} 
Let $A$ be a connected graded noetherian graded algebra. Let 
$M$ be a finitely generated graded left $A$-module. We call 
$M$ {\it $s$-Cohen--Macaulay} or simply {\it Cohen--Macaulay} 
if $H^i_{\fm}(M) = 0$ for all $i \neq s$ and 
$H^s_{\fm}(M) \neq 0$. We say $A$ is {\it Cohen--Macaulay} if
$_AA$ is Cohen--Macaulay.
\end{definition}

Throughout the rest of this paper, when we need a dualizing complex we assume the following 
hypothesis; we refer the reader to \cite{Ye} for the 
definitions of a dualizing complex and a balanced dualizing 
complex.

\begin{hypothesis}
\label{zzhyp1.3}
Let $A$ be a noetherian connected graded algebra with balanced 
dualizing complex. In this case by \cite[Theorem 6.3]{VdB} the 
balanced dualizing complex will be given by $\R\Gamma_\fm(A)'$, 
where $'$ denotes the graded vector space dual.
\end{hypothesis}

The local cohomological dimension of a graded $A$-module $M$ is 
defined to be
$$\lcd(M) :=\sup\{i \in \mathbb{Z} \mid H^i_{\mathfrak{m}}(M) \neq 0\}$$
and the cohomological dimension of $\Gamma_\fm$ is defined to be
$$\cd(\Gamma_\fm) = \sup_{M \in A \Gr} \{\lcd(M)\}.$$

We will use the following {\it Local Duality Theorem} of Van den Bergh
several times.

\begin{theorem} \cite[Theorem 5.1]{VdB} 
\label{zzthm1.4}
Let $A$ be a noetherian connected graded $\Bbbk$-algebra with
$\cd(\Gamma_\fm)< \infty$ and let $C$ be a connected graded $\Bbbk$-algebra. 
Then for 
any $X \in \D((A\otimes C^{\op})\Gr)$ there is an isomorphism
$$\R\Gamma_\fm(X)' \cong \RHom_A(X,\R\Gamma_\fm(A)')$$
in $\D((C\otimes A^{\op})\Gr)$.
\end{theorem}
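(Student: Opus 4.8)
The plan is to factor the statement through the single natural identity
$$\R\Gamma_\fm(X)\;\cong\;\R\Gamma_\fm(A)\otimes^{\mathbf{L}}_A X \qquad (\dagger)$$
(an isomorphism in $\D((A\otimes C^{\op})\Gr)$, natural in $X$), after which the theorem is pure adjunction. Indeed, since $\Bbbk$ is a field the graded dual $(-)'=\Hom_\Bbbk(-,\Bbbk)$ is exact, so the tensor--Hom adjunction $\Hom_\Bbbk(M\otimes_A N,\Bbbk)\cong\Hom_A(N,\Hom_\Bbbk(M,\Bbbk))$ derives to a graded Matlis duality
$$(M\otimes^{\mathbf{L}}_A N)'\;\cong\;\RHom_A(N,M').$$
Applying this with $M=\R\Gamma_\fm(A)$ (tensoring over its right $A$-structure, with its left $A$-structure and the right $C$-structure of $N=X$ surviving) and feeding in $(\dagger)$ gives
$$\R\Gamma_\fm(X)'\cong(\R\Gamma_\fm(A)\otimes^{\mathbf{L}}_A X)'\cong\RHom_A(X,\R\Gamma_\fm(A)'),$$
which is the assertion. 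Along the way one checks that all three objects genuinely lie in $\D((C\otimes A^{\op})\Gr)$, the left $A$-structure used in $\RHom_A$ being the one coming from the right $A$-structure of $\R\Gamma_\fm(A)$.

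Everything thus reduces to establishing $(\dagger)$. First I would construct the comparison morphism $\Phi_X\colon \R\Gamma_\fm(A)\otimes^{\mathbf{L}}_A X\to\R\Gamma_\fm(X)$. On modules it is the multiplication map $a\otimes m\mapsto am$: if $A_{\geq n}\,a=0$ then $A_{\geq n}(am)=(A_{\geq n}\,a)m=0$, so the image is $\fm$-torsion. Equivalently, $\Phi$ is the direct limit over $n$ of the evaluation maps $\RHom_A(A/A_{\geq n},A)\otimes^{\mathbf{L}}_A X\to\RHom_A(A/A_{\geq n},X)$, using $\Gamma_\fm=\varinjlim_n\Hom_A(A/A_{\geq n},-)$ and hence $\R\Gamma_\fm=\varinjlim_n\RHom_A(A/A_{\geq n},-)$. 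This $\Phi_X$ is natural in $X$ and is a morphism of triangulated functors. Then I would verify $\Phi_X$ is an isomorphism by a generator argument performed after forgetting the $C$-action (which is harmless, since the forgetful functor $\D((A\otimes C^{\op})\Gr)\to\D(A\Gr)$ is conservative and $\Phi$ is built only from the left $A$-structure). At $X=A$ both sides equal $\R\Gamma_\fm(A)$ and $\Phi_A$ is the canonical identification, so $\Phi_{A(\ell)}$ is an isomorphism for every shift $\ell$; the class $\{Y\in\D(A\Gr):\Phi_Y\text{ is an isomorphism}\}$ is a triangulated subcategory closed under arbitrary direct sums, and since the $A(\ell)$ generate $\D(A\Gr)$ as a localizing subcategory it is all of $\D(A\Gr)$.

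The main obstacle---and the only place the hypothesis $\cd(\Gamma_\fm)<\infty$ is genuinely used---is showing that $\R\Gamma_\fm$ commutes with arbitrary direct sums, which is exactly what makes the comparison class closed under coproducts. The source functor $\R\Gamma_\fm(A)\otimes^{\mathbf{L}}_A(-)$ visibly commutes with coproducts, but for $\R\Gamma_\fm$ this requires boundedness: a priori $\varinjlim_n\RHom_A(A/A_{\geq n},-)$ is an unbounded direct limit, and only the uniform bound $\cd(\Gamma_\fm)<\infty$ permits truncating and interchanging the colimit with cohomology and with coproducts for every, possibly unbounded, $X$. The remaining work is bookkeeping: tracking the $(A,A)$- and $(A,C)$-bimodule structures and the cohomological shifts through $\Phi$ and the Matlis isomorphism so that the final comparison is one of $(C\otimes A^{\op})$-complexes, and confirming that the right $C$-action simply rides along at each step. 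This recovers Van den Bergh's local duality theorem, which in this paper is quoted from \cite[Theorem 5.1]{VdB} and used as a black box.
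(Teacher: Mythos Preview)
The paper does not prove Theorem~\ref{zzthm1.4}; it is quoted verbatim from \cite[Theorem~5.1]{VdB} and used throughout as a black box, exactly as you observe in your final sentence. There is therefore nothing in the paper to compare your argument against.

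That said, your sketch is correct and is essentially the standard proof (and close to Van den Bergh's own). The two-step strategy---first establish $\R\Gamma_\fm(X)\cong\R\Gamma_\fm(A)\otimes^{\mathbf L}_A X$ via a localizing-subcategory argument, then apply graded Matlis duality $(M\otimes^{\mathbf L}_A N)'\cong\RHom_A(N,M')$---is exactly right, and you have correctly isolated the one nontrivial point: that $\R\Gamma_\fm$ commutes with arbitrary coproducts, which is what the hypothesis $\cd(\Gamma_\fm)<\infty$ buys. One small remark worth making explicit is that the commutation of $H^i_\fm$ with direct sums on \emph{modules} already uses noetherianity of $A$ (so that direct sums of graded-injectives remain injective); the finite cohomological dimension is then what lets you pass from modules to unbounded complexes via the convergent hypercohomology spectral sequence. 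Your bookkeeping of the $(A,C)$- and $(A,A)$-bimodule structures is also accurate.
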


\section{Equalities and inequalities}
\label{zzsec2}

In this section we study the relationships between the 
regularities defined in the previous sections, recalling
and generalizing results of J{\o}rgensen, Dong, and Wu
\cite{Jo2, Jo3, DW}. Throughout this section, we assume
Hypothesis~\ref{zzhyp1.3}.

Recall, from Definition~\ref{zzdef0.5}, that the 
Castelnuovo--Mumford regularity of cochain complex $X$
of left $A$-modules is defined to be
\begin{align*}\CMreg(X) &= \deg(\R\Gamma_{\fm}(X)) \\
&= \sup_{i,j \in \mathbb{Z}} \{j +i \; \mid \; 
H^{i}_{\fm}(X)_{j}\neq 0 \}
\end{align*}
As noted in \cite[Observation 2.3]{Jo2} if 
$X \in \D^{\bb}_{\fg} (A \Gr)$ then $\R\Gamma_{\fm}(X)' \in  
\D^{\bb}_{\fg} (A^{\op} \Gr)$  and $\R\Gamma_{\fm}(X)' \ncong 0$. 
It follows that $\CMreg(X)$ is finite. In particular, if $A$ is a finitely generated
commutative algebra, then $\CMreg(X)$ is finite for all $X \in \D^{\bb}_{\fg}(A\Gr)$. 
In Example~\ref{zzex5.1}, we show that
there exists a noetherian connected domain which does not satisfy Hypothesis~\ref{zzhyp1.3} with  $\GKdim A = 2$ with
 $\CMreg(A)= \infty$.

\begin{example}
\label{zzex2.1} 
Assume Hypothesis~\ref{zzhyp1.3}.
\begin{enumerate}
\item[(1)]
If $M$ is a finite-dimensional nonzero graded left $A$-module, 
then $H_{\fm}^i(M) = 0$ for all $i \neq 0$ and $H_{\fm}^0(M) = M$ so
\begin{equation}
\label{E2.1.1}\tag{E2.1.1}
\CMreg(M)=\deg (M).
\end{equation}
A more general case is considered in part (4), as a finite-dimensional module is $0$-Cohen--Macaulay.
\item[(2)] 
Let $A$ be an AS Gorenstein algebra of type $(d, \bfl)$. Then
$\CMreg (A) =d-\bfl$. This is a well-known fact, which is a 
consequence of \cite[Theorem 8.1(3)]{AZ}.
\item[(3)]
Let $A$ be an AS regular algebra of type $(d, \bfl)$. Recall 
that when regarded as a rational function, $\deg_t h_A(t) = -\bfl$ 
\cite[Proposition 3.1(4)]{StZ}. Hence,
$$\CMreg(A) =d-\bfl=\gldim A+\deg_t  h_A(t).$$
By the second statement in \cite[Proposition 3.1(4)]{StZ},
$d\leq \bfl$. As a consequence,
\begin{equation}
\label{E2.1.2}\tag{E2.1.2}
\CMreg(A)=d-\bfl\leq 0.
\end{equation}
\item[(4)]
If $M$ is $s$-Cohen--Macaulay, then, by definition,
\begin{equation}
\label{E2.1.3}\tag{E2.1.3}
\CMreg (M)=s+\deg(H^s_{\fm}(M)).
\end{equation}
\end{enumerate}
\end{example}

Recall that the {\it Tor-regularity} of a nonzero complex 
$X$ of graded left $A$-modules was defined in Definition~\ref{zzdef0.4}, namely
\begin{align*}
\Torreg(X) &=  \deg(\Bbbk \otimes_A^L X) \\
&= \sup_{i,j \in \mathbb{Z}}
\{j-i\mid \Tor^A_i(\Bbbk, X)_j\neq 0 \}.
\end{align*}

\begin{definition}
\label{zzdef2.2}
Let $X$ be a nonzero object in $\D^{\bb}_{\fg}(A\Gr)$. The 
{\it $\Ext$-regularity} of $X$ is defined to be
\begin{align*}
\Extreg (X)&=-\ged (\R \Hom_A(X, \Bbbk))\\ &= 
- \inf_{i \in \mathbb{Z}} \{\ged (\Ext^i_A(X, \Bbbk))+i\}.
\end{align*}
\end{definition}

By \cite[Remark 4.5]{DW}, if $X$ has a finitely generated
minimal free resolution over $A$, then 
$\Extreg(X)=\Torreg(X)$, 
and we will not distinguish between $\Extreg(X)$ and 
$\Torreg(X)$ in this case.
The following easy lemma is useful for computing 
$\Torreg$ of modules over tensor products. 
Throughout this paper,
$\otimes$ means $\otimes_{\Bbbk}$.

\begin{lemma}
\label{zzlem2.3}
Let $A$ and $B$ be connected graded algebras. Let $P$ 
{\rm{(}}resp. $Q${\rm{)}} be a nonzero object in
$\D^{-}_{\fg} (A \Gr)$
{\rm{(}}resp. $\D^{-}_{\fg} (B \Gr)${\rm{)}}. 
Then 
$\Torreg({}_{A\otimes B} P\otimes Q) 
= \Torreg({}_{A} P) + \Torreg({}_{B} Q).$
\end{lemma}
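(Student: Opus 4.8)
The plan is to reduce the statement to a computation of graded vector spaces via the Künneth formula. The key observation is that $\Torreg$ is expressed as a degree of a derived tensor product: by the formula recalled just before Definition~\ref{zzdef2.2}, we have $\Torreg(_C Z) = \deg(\Bbbk_C \otimes^L_C Z)$ for a connected graded algebra $C$ and $Z \in \D^-_{\fg}(C\Gr)$. So I would first rewrite both sides of the desired identity in this form, working over $C = A\otimes B$ on the left-hand side and over $A$ and $B$ separately on the right. The trivial module over the tensor product factors as $\Bbbk_{A\otimes B} \cong \Bbbk_A \otimes \Bbbk_B$, which is the algebraic input that makes everything split.

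The main computation is the Künneth isomorphism
\begin{equation*}
\Tor^{A\otimes B}_n(\Bbbk,\, P\otimes Q) \;\cong\; \bigoplus_{i+j=n} \Tor^A_i(\Bbbk, P)\otimes_{\Bbbk}\Tor^B_j(\Bbbk, Q),
\end{equation*}
which is an isomorphism of bigraded vector spaces (homological degree $n$ and internal $\mathbb{Z}$-grading). To justify this I would take a minimal graded free resolution $F \to P$ over $A$ and $G \to Q$ over $B$; since these are complexes of free modules, $F\otimes G$ is a complex of free $A\otimes B$-modules resolving $P\otimes Q$ (flatness of free modules and the fact that $\otimes_{\Bbbk}$ is exact over a field make $F\otimes G \to P\otimes Q$ a quasi-isomorphism). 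Applying $\Bbbk_{A\otimes B}\otimes_{A\otimes B}(-)$ and using $\Bbbk_{A\otimes B}\otimes_{A\otimes B}(A\otimes B) \cong \Bbbk_A\otimes_A A \,\otimes\, \Bbbk_B\otimes_B B$ on free modules, the total complex becomes $(\Bbbk\otimes^L_A P)\otimes_{\Bbbk}(\Bbbk\otimes^L_B Q)$, whose cohomology is computed by the Künneth formula over the field $\Bbbk$ (no $\Tor$ correction terms, since $\Bbbk$ is a field).

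Once the Künneth isomorphism is in hand, the identity for $\Torreg$ follows by tracking the bigrading. A nonzero class in $\Tor^A_i(\Bbbk,P)_{j}\otimes \Tor^B_{i'}(\Bbbk,Q)_{j'}$ contributes to homological degree $i+i'$ and internal degree $j+j'$, hence to the quantity $(j+j')-(i+i')=(j-i)+(j'-i')$. Taking the supremum over all nonzero contributions, and using that the supremum of a sum over a product of index sets is the sum of the suprema (which is where the tensor product over a \emph{field} is essential, so that no cancellation occurs and a product of nonzero terms is nonzero), yields $\Torreg(_{A\otimes B}P\otimes Q) = \Torreg(_A P)+\Torreg(_B Q)$.

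The step I expect to require the most care is not the bigrading bookkeeping but the passage to $\sup$ when one or both Tor-regularities is $+\infty$: I must check that the identity holds in $\mathbb{Z}\cup\{+\infty\}$, i.e. that if $\Torreg(_A P)=+\infty$ then the left-hand side is also $+\infty$ (and conversely). Because $Q\neq 0$ forces some $\Tor^B_{i'}(\Bbbk,Q)$ to be nonzero, the product of a nonzero term from the $A$-side with this fixed nonzero $B$-term is nonzero in the Künneth decomposition, so an unbounded sequence $j_k-i_k\to\infty$ on the $A$-side produces an unbounded sequence on the left-hand side; this is exactly the place where nonvanishing of $P$ and $Q$ (part of the hypotheses) is used. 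Handling the finite and infinite cases uniformly via the extended-real supremum is the only genuine subtlety; the rest is the standard Künneth argument over a field.
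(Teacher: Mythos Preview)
Your proposal is correct and follows essentially the same route as the paper: take free resolutions of $P$ and $Q$, use the K\"unneth formula over the field $\Bbbk$ to identify $\Tor^{A\otimes B}_n(\Bbbk,P\otimes Q)$ with $\bigoplus_{p+q=n}\Tor^A_p(\Bbbk,P)\otimes\Tor^B_q(\Bbbk,Q)$, and then read off the additivity of $\Torreg$ from the bigrading. The paper handles the infinite case implicitly via the convention $\deg(0)=-\infty$, while you spell it out; otherwise the arguments are the same.
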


\begin{proof}
Replacing $P$ by its (minimal) free resolution, we can assume 
that each term $P^i$ is a finitely generated free $A$-module.
The same applies to $Q$. Let $X$ and $Y$ denote the 
complexes given by tensoring $P$ and $Q$ with 
$\Bbbk_A$ and $\Bbbk_B$ respectively on the left.
Then $\Tor_i^A(\Bbbk, P)$ and $\Tor_i^B(\Bbbk, Q)$ can be 
computed by taking homology of $X$ and $Y$, respectively.
Further, $\Tor_i^{A\otimes B}(\Bbbk, P\otimes Q)$ can be 
computed by taking homology of the complex $X \otimes Y$. 
By the K\"{u}nneth formula (see, e.g. \cite[Theorem 10.8.1]{Ro}), 
we have
$$\begin{aligned}
\bigoplus_{p+q = n} \Tor_p^A(\Bbbk,P) \otimes \Tor_q^B(\Bbbk,Q) 
&\cong \bigoplus_{p+q = n} H_p(X) \otimes H_q(Y) \\
&\cong H_n(X \otimes Y) \\
&\cong \Tor_n^{A\otimes B}(\Bbbk,P\otimes Q).
\end{aligned}
$$
Therefore, using the convention that $\deg (0)=-\infty$,
$$\begin{aligned}
\Torreg ({}_{A \otimes B} P\otimes Q )
&=\sup_{n \in \mathbb{Z}}
   \{\deg (\Tor^{A \otimes B}_n( \Bbbk, P\otimes Q)) - n \} \\
&= \sup_{p, q \in \mathbb{Z}}\{\deg (\Tor^{A}_{p}( \Bbbk, P)) + 
   \deg(\Tor^{B}_{q}(\Bbbk, Q)) -(p+q) \} \\
&= \sup_{p \in \mathbb{Z}}\{\deg (\Tor^{A}_{p}( \Bbbk, P)) -p \} 
   + \sup_{ q \in \mathbb{Z}}\{\deg(\Tor^{A}_{q}(\Bbbk, Q)) -q\}  \\
&= \Torreg({}_AP) + \Torreg({}_BQ),
\end{aligned}$$
as desired.
\end{proof}

\begin{example}
\label{zzex2.4}
Assume Hypothesis~\ref{zzhyp1.3}.
\begin{enumerate}
\item[(1)]
If $M \in A \Gr$ and $r=\Torreg(M)$, then 
\begin{equation}
\label{E2.4.1}\tag{E2.4.1}
t^A_i(_A M):=\deg(\Tor^A_i(\Bbbk, M))\leq (r+ i)
\end{equation}
for all $i$.
\item[(2)] 
$\Extreg(A)=\Torreg(A)=0$.
\item[(3)]
Let $T$ be any noetherian AS regular algebra of type $(d, \bfl)$. 
It is well-known that 
\begin{equation}
\label{E2.4.2}\tag{E2.4.2}
\Torreg({}_T\Bbbk)=\bfl - d \quad \text{(which is equal to $-\CMreg(T)$)}.
\end{equation}
This assertion follows from 
\cite[Proposition 3.1(3) and eq. (3.2)]{StZ}.

Next we give an explicit example.
Let $T$ be a non-Koszul AS regular algebra of global 
dimension 3 that is generated in degree 1. 
By \cite{AS}, $T$ is generated by
two elements that satisfy cubic relations. So the minimal
free resolution of ${}_T \Bbbk$ has the form
\[
0 \to T(-4) \to T(-3)^{\oplus 2} \to  T(-1)^{\oplus 2} \to  T \to \Bbbk \to 0.
\]
Therefore, $T$
is of type $(3,4)$ and 
$$t^T_i(\Bbbk)=\begin{cases} 0, & i=0,\\
1, & i=1,\\
3, & i=2,\\
4, & i=3,\\
-\infty &  i>3.
\end{cases}
$$
By Example~\ref{zzex2.1}(3), $\CMreg(T)=-4 + 3=-1$ 
and it is easy to check that 
$$\Torreg(\Bbbk)=\max\{0, 1-1, 3-2, 4-3, -\infty\}=1.$$
As a consequence, $\Torreg(\Bbbk)=-\CMreg(T)$,
or equivalently, $$\ASreg(T)=0.$$
Of course this equation 
always holds for all AS regular algebras by \eqref{E2.4.2}. 
\item[(4)]
Let $n$ be a fixed positive integer.
Let $A$ be any finitely generated commutative Koszul 
algebra (but not regular) and let $T$ be the algebra 
in part (3).  
Let $B=A\otimes T^{\otimes n}$. By a similar argument to \cite[Lemma 2.7]{KWZ2}, using the K\"{u}nneth formula, one can easily check 
that $\Torreg({}_B\Bbbk)=n$ [Lemma~\ref{zzlem2.3}] 
and that $B$ is neither AS regular nor Koszul.
\end{enumerate}
\end{example}

The following result of J{\o}rgensen plays an important role
in this paper.
 
\begin{theorem} \cite[Theorems 2.5 and 2.6]{Jo2}
\label{zzthm2.5}
Let $A$ be a noetherian connected graded algebra with a 
balanced dualizing complex, and let $X$ be a nonzero object
in $\D^{\bb}_{\fg}(A\Gr)$. 
\begin{enumerate}
\item[(1)]
$\Torreg(X)\leq \CMreg(X) + \Torreg(\Bbbk).$
\item[(2)]
$\CMreg(X)\leq \Torreg(X)+ \CMreg(A).$
\end{enumerate}
\end{theorem}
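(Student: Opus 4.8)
The plan is to isolate a single degree-counting lemma and to feed it, once directly and once through Van den Bergh's Local Duality Theorem~\ref{zzthm1.4}, in order to obtain the two inequalities. Two structural facts drive everything. First, for $X\in\D^{\bb}_{\fg}(A\Gr)$ there is a natural isomorphism $\R\Gamma_{\fm}(X)\cong \R\Gamma_{\fm}(A)\otimes^{L}_A X$, because $\R\Gamma_{\fm}$ commutes with direct limits and is therefore computed termwise on a minimal free resolution, with $\R\Gamma_{\fm}(A(-j))\cong\R\Gamma_{\fm}(A)(-j)$. Second, Theorem~\ref{zzthm1.4} gives $\RHom_A(X,R)\cong\R\Gamma_{\fm}(X)'$, where $R=\R\Gamma_{\fm}(A)'$ is the balanced dualizing complex of Hypothesis~\ref{zzhyp1.3}.

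The key lemma I would establish is the following: for a bounded complex $U$ of graded right $A$-modules with $\deg(U)<\infty$ and a nonzero $V\in\D^{-}_{\fg}(A\Gr)$,
\[
\deg\bigl(U\otimes^{L}_A V\bigr)\le \deg(U)+\Torreg(V).
\]
To prove it, replace $V$ by a minimal free resolution $F$, so that each $F^{-i}$ is a finite sum of shifts $A(-j)$ with $j\le \Torreg(V)+i$; this is exactly the estimate $t^A_i(V)\le \Torreg(V)+i$ recorded in \eqref{E2.4.1}. Running the hyperhomology spectral sequence of $U\otimes_A F$, any nonzero contribution to $H^{n}(U\otimes^{L}_A V)_m$ comes from a term $H^{q}(U)_{m-j}$ sitting in homological position $p=-i$ with $n=p+q$, whence $m+q\le \deg(U)+j\le \deg(U)+\Torreg(V)-p$ and therefore $m+n\le \deg(U)+\Torreg(V)$. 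Taking the supremum over $m,n$ yields the lemma. Given this, part (2) is immediate: with $U=\R\Gamma_{\fm}(A)$ and $V=X$ the first structural fact gives
\[
\CMreg(X)=\deg\bigl(\R\Gamma_{\fm}(X)\bigr)=\deg\bigl(\R\Gamma_{\fm}(A)\otimes^{L}_A X\bigr)\le \deg\bigl(\R\Gamma_{\fm}(A)\bigr)+\Torreg(X)=\CMreg(A)+\Torreg(X).
\]

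Part (1) is the genuinely harder one, and the naive use of the same lemma does not suffice: resolving $\Bbbk$ rather than $X$ only produces $\Torreg(X)\le \deg(X)+\Torreg(\Bbbk)$, and $\deg(X)$ cannot in general be improved to $\CMreg(X)$ (already for $X=A$ one has $\deg(A)=0>\CMreg(A)$ when $A$ is AS regular and non-Koszul, by \eqref{E2.1.2}). So local duality must enter in an essential way. Here I would set $W=\RHom_A(X,R)\cong\R\Gamma_{\fm}(X)'\in\D^{\bb}_{\fg}(A^{\op}\Gr)$, so that $\ged(W)=-\CMreg(X)$, and use biduality against $R$ to write $X\cong\RHom_{A^{\op}}(W,R)$. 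Since $X$ has a finitely generated minimal free resolution, $\Torreg(X)=\Extreg(X)=-\ged\bigl(\RHom_A(X,\Bbbk)\bigr)$, so it suffices to bound $\ged\RHom_A(X,\Bbbk)$ from below. Substituting $X\cong\RHom_{A^{\op}}(W,R)$ and applying a tensor--evaluation isomorphism rewrites $\RHom_A(X,\Bbbk)$ as a derived tensor product over $A^{\op}$ of $W$ with a complex whose relevant degree is governed by $\Torreg(\Bbbk)$, the input $\RHom_A(\Bbbk,R)\cong\Bbbk$ (itself the case $X=\Bbbk$ of Theorem~\ref{zzthm1.4}, since $\R\Gamma_{\fm}(\Bbbk)=\Bbbk$) being what makes $\Torreg(\Bbbk)$ appear. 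A $\ged$-version of the counting lemma then yields $-\ged\RHom_A(X,\Bbbk)\le -\ged(W)+\Torreg(\Bbbk)=\CMreg(X)+\Torreg(\Bbbk)$, which is part (1).

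The main obstacle I anticipate is precisely this last step: making the biduality and tensor--evaluation isomorphisms rigorous for complexes in $\D^{\bb}_{\fg}$ rather than perfect complexes (this is where the finite injective dimension of the dualizing complex $R$ is used), and carefully tracking the interaction between the homological shift and the internal grading as one passes among $\deg$, $\ged$, and the graded vector-space dual $(-)'$ of a complex, so that $\Torreg(\Bbbk)$ enters with the correct sign. Once this bookkeeping is pinned down, part (1) becomes the $\ged$-dual mirror of the computation proving part (2), with $\Torreg(\Bbbk)$ playing the role that $\CMreg(A)$ plays there.
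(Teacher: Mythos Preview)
The paper does not prove Theorem~\ref{zzthm2.5}; it is quoted from J{\o}rgensen \cite[Theorems~2.5 and~2.6]{Jo2} and used as a black box thereafter. So there is no proof in the paper to compare against, and the question is simply whether your sketch is correct.

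Part~(2) is fine and is essentially J{\o}rgensen's argument. For part~(1), your diagnosis (local duality is essential) and your endgame (a $\ged$-mirror of the degree lemma applied to $W=\RHom_A(X,R)$) are both correct, but the route you propose to connect them---biduality $X\cong\RHom_{A^{\op}}(W,R)$ followed by a tensor--evaluation isomorphism---is exactly where you flag trouble, and rightly so: the evaluation map you need is an isomorphism when $W$ is perfect, whereas here $W$ lies only in $\D^{\bb}_{\fg}(A^{\op}\Gr)$. This can be patched but is needlessly delicate. A cleaner route avoids biduality entirely. Since $\Bbbk\cong\RHom_{A^{\op}}(\Bbbk,R)$ (local duality applied to $\Bbbk$, which you already note), the bimodule swap
\[
\RHom_A\bigl(X,\RHom_{A^{\op}}(\Bbbk,R)\bigr)\;\cong\;\RHom_{A^{\op}}\bigl(\Bbbk,\RHom_A(X,R)\bigr)
\]
gives $\RHom_A(X,\Bbbk)\cong\RHom_{A^{\op}}(\Bbbk,W)$ directly. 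The $\ged$-form of your counting lemma, stated for $\RHom$ rather than $\otimes^L$, reads $\ged\RHom_{A^{\op}}(U,V)\ge\ged(V)-\Torreg(U_A)$; taking $U=\Bbbk_A$ and $V=W$ yields
\[
\Torreg(X)=-\ged\RHom_{A^{\op}}(\Bbbk,W)\le-\ged(W)+\Torreg(\Bbbk)=\CMreg(X)+\Torreg(\Bbbk),
\]
which is part~(1). The swap isomorphism needs only that $R$ be represented by a bounded complex of bimodules injective on each side, which is part of the definition of a dualizing complex; no perfection of $W$ is required.
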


We have the following immediate consequences.

\begin{corollary}
\label{zzcor2.6}
Let $A$ be a noetherian connected graded algebra with a 
balanced dualizing complex and let $X$ be a nonzero object in
$\D^{\bb}_{\fg}(A\Gr)$. 
\begin{enumerate}
\item[(1)]
If $\Torreg(\Bbbk)$ is finite, then so is $\Torreg(X)$.
\item[(2)]
$\ASreg(A)\geq 0$.
\item[(3)]
If $\ASreg(A)=0$, then equality holds in both parts 
of Theorem~\ref{zzthm2.5}. Namely, 
$\Torreg(X)=\CMreg(X) + \Torreg(\Bbbk)$.
\end{enumerate}
\end{corollary}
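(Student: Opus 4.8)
The plan is to derive all three parts of Corollary~\ref{zzcor2.6} directly from the two inequalities in Theorem~\ref{zzthm2.5}, treating them as a package of elementary consequences. First I would recall that, by Definition~\ref{zzdef0.6}, $\ASreg(A) = \Torreg(\Bbbk) + \CMreg(A)$, so parts (2) and (3) are really statements about the sum of the two ``error terms'' appearing on the right-hand sides of Theorem~\ref{zzthm2.5}(1) and (2). The overarching strategy is to chain the two inequalities together by substituting one into the other; this should produce both the nonnegativity of $\ASreg(A)$ and the forcing of equality when $\ASreg(A)=0$.

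For part (1), I would apply Theorem~\ref{zzthm2.5}(2) to $X$ to get $\CMreg(X) \leq \Torreg(X) + \CMreg(A)$, and then feed this into Theorem~\ref{zzthm2.5}(1), obtaining
\[
\Torreg(X) \leq \CMreg(X) + \Torreg(\Bbbk) \leq \Torreg(X) + \CMreg(A) + \Torreg(\Bbbk) = \Torreg(X) + \ASreg(A).
\]
If $\Torreg(\Bbbk)$ is finite, then $\CMreg(A)$ is finite as well (since $\CMreg(A)$ is always finite for $A \in \D^{\bb}_{\fg}(A\Gr)$ by the discussion following Definition~\ref{zzdef0.5}, given Hypothesis~\ref{zzhyp1.3}), and the first inequality $\Torreg(X) \leq \CMreg(X) + \Torreg(\Bbbk)$ bounds $\Torreg(X)$ by a finite quantity once $\CMreg(X)$ is known to be finite. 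The key point is that $\CMreg(X)$ is finite for any nonzero $X \in \D^{\bb}_{\fg}(A\Gr)$, so the right-hand side of Theorem~\ref{zzthm2.5}(1) is finite, giving finiteness of $\Torreg(X)$.

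For parts (2) and (3), I would apply the chained inequality to a specific convenient choice of $X$. Taking $X = {}_A\Bbbk$ and using that $\CMreg(\Bbbk) = \deg(\Bbbk) = 0$ (by Example~\ref{zzex2.1}(1)) together with $\Torreg(\Bbbk) \geq 0$, one finds that $0 \leq \Torreg(\Bbbk) \leq \CMreg(\Bbbk) + \Torreg(\Bbbk)$ is automatic, so instead the cleaner route is to observe that the chain displayed above, valid for \emph{every} nonzero $X$, forces $0 \leq \ASreg(A)$ whenever there exists an $X$ for which $\Torreg(X)$ is finite and nonzero in the relevant sense; taking $X = A$ itself (with $\Torreg(A) = 0$ by Example~\ref{zzex2.4}(2)) collapses the chain to $0 \leq \CMreg(A) + \Torreg(\Bbbk) = \ASreg(A)$, which is exactly part (2). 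For part (3), the hypothesis $\ASreg(A) = 0$ means the outer terms of the chain $\Torreg(X) \leq \CMreg(X) + \Torreg(\Bbbk) \leq \Torreg(X)$ coincide, so both inequalities must be equalities; in particular $\Torreg(X) = \CMreg(X) + \Torreg(\Bbbk)$, as claimed. The main thing to be careful about is the bookkeeping with the $\pm\infty$ conventions when substituting one inequality into the other, but since the error terms $\CMreg(A)$ and $\Torreg(\Bbbk)$ are the quantities being controlled and are finite in the cases of interest, no genuine obstacle arises—this corollary is a formal consequence of Theorem~\ref{zzthm2.5}.
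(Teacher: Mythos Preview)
Your proposal is correct and follows essentially the same approach as the paper: part (1) uses Theorem~\ref{zzthm2.5}(1) together with the finiteness of $\CMreg(X)$; part (2) is obtained by summing (equivalently, chaining) the two inequalities of Theorem~\ref{zzthm2.5}; and part (3) is the squeeze argument $\Torreg(X)\leq \CMreg(X)+\Torreg(\Bbbk)\leq \Torreg(X)$ once $\ASreg(A)=0$. The paper's write-up is terser (for (2) it simply says ``take the sum of the two inequalities''), but the mathematics is identical.
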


\begin{proof}
(1) By Theorem~\ref{zzthm2.5}(1), 
\[\Torreg(X) \leq \CMreg(X) + \Torreg(\Bbbk),
\] 
and since $\CMreg(X)$ is finite, the result follows.

(2) The statement follows by taking the sum 
of two inequalities in Theorem~\ref{zzthm2.5}.

(3) When $\ASreg(A) = 0$ then $\Torreg(\Bbbk)=-\CMreg(A)$. Hence, using both inequalities in Theorem~\ref{zzthm2.5}, we have
\[ \Torreg(X) \leq \CMreg(X) +\Torreg(\Bbbk) = \CMreg(X) - \CMreg(A) \leq \Torreg(X)
\]
and so equality holds throughout.
\end{proof}

We will show that if $X$ has finite projective dimension,
then Theorem~\ref{zzthm2.5}(2) becomes an equality. 
We will need the following straightforward lemma.
Recall that for a cochain complex $X$ and $\ell \in \mathbb{Z}$, 
the complexes $X(\ell)$ and $X[\ell]$ were defined 
at the beginning of Section~\ref{zzsec1}.

\begin{lemma}
\label{zzlem2.7} Let $A$ be a noetherian connected
graded algebra and $X$ be a nonzero complex of graded left 
$A$-modules. Suppose $\deg(X)$ is finite. Then 
$$\deg(X[1])=\deg(X)-1 \quad {\text{and}}
\quad \deg(X(1))=\deg(X)-1.$$ 
Similar equations hold for $\ged(X)$,
$\CMreg(X)$, $\Extreg(X)$, and $\Torreg(X)$.
\end{lemma}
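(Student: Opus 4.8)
The plan is to reduce all six equalities to two elementary facts about the degree and initial degree of a complex, namely that
\[
\deg(X[1])=\deg(X)-1,\qquad \deg(X(1))=\deg(X)-1,
\]
together with the analogous statements for $\ged$. First I would establish these directly from the definitions \eqref{E1.0.3} and \eqref{E1.0.4} by reindexing the supremum. Since $H^n(X[1])=H^{n+1}(X)$, the substitution $n'=n+1$ gives
\[
\deg(X[1])=\sup_{m,n}\{m+n\mid H^{n+1}(X)_m\neq 0\}
=\sup_{m,n'}\{m+n'-1\mid H^{n'}(X)_m\neq 0\}=\deg(X)-1,
\]
and since $H^n(X(1))_m=H^n(X)_{m+1}$, the substitution $m'=m+1$ gives $\deg(X(1))=\deg(X)-1$. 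The hypothesis that $\deg(X)$ is finite ensures these are honest identities rather than the vacuous $\pm\infty=\pm\infty$. Replacing $\sup$ by $\inf$ yields the same identities for $\ged$, and more generally $\ged(X[\ell])=\ged(X)-\ell$ and $\ged(X(\ell))=\ged(X)-\ell$ for every $\ell\in\mathbb{Z}$.

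Next I would dispatch $\CMreg$ and $\Torreg$, both of which are expressed through a covariant derived functor applied to $X$: recall $\CMreg(X)=\deg(\R\Gamma_{\fm}(X))$ and $\Torreg(X)=\deg(\Bbbk\otimes_A^L X)$. Each of $\R\Gamma_{\fm}(-)$ and $\Bbbk\otimes_A^L(-)$ is a triangulated functor, hence commutes with the homological shift $[1]$, and each acts degreewise on the internal grading, hence commutes with the shift $(1)$. Thus $\R\Gamma_{\fm}(X[1])=(\R\Gamma_{\fm}X)[1]$ and $\R\Gamma_{\fm}(X(1))=(\R\Gamma_{\fm}X)(1)$, with the same formulas for $\Bbbk\otimes_A^L(-)$, and applying the $\deg$ identities of the first paragraph gives $\CMreg(X[1])=\CMreg(X)-1$ and $\CMreg(X(1))=\CMreg(X)-1$, and likewise for $\Torreg$.

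The one case requiring care will be $\Extreg(X)=-\ged(\R\Hom_A(X,\Bbbk))$, since it is built from the contravariant functor $\R\Hom_A(-,\Bbbk)$ and from $\ged$ rather than $\deg$. Contravariance reverses the homological shift, so $\R\Hom_A(X[1],\Bbbk)\cong\R\Hom_A(X,\Bbbk)[-1]$, while the graded identity $\Hom_A(M(1),N)=\Hom_A(M,N)(-1)$ gives $\R\Hom_A(X(1),\Bbbk)\cong\R\Hom_A(X,\Bbbk)(-1)$. Combining these with $\ged(Z[-1])=\ged(Z)+1$ and $\ged(Z(-1))=\ged(Z)+1$ from the first paragraph, I would obtain $\ged(\R\Hom_A(X[1],\Bbbk))=\ged(\R\Hom_A(X,\Bbbk))+1$ and the same for the $(1)$-shift, whence $\Extreg(X[1])=\Extreg(X)-1$ and $\Extreg(X(1))=\Extreg(X)-1$. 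The main obstacle is precisely this sign bookkeeping under the contravariant functor---tracking that $[1]$ turns into $[-1]$ and that $(1)$ turns into $(-1)$---after which the conclusion falls out of the reindexing in the first paragraph.
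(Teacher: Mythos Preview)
Your proof is correct. The paper does not actually give a proof of this lemma at all; it simply introduces it as a ``straightforward lemma'' and moves on. Your argument supplies exactly the kind of routine verification the authors had in mind: reindexing the supremum/infimum directly from \eqref{E1.0.3}--\eqref{E1.0.4} for $\deg$ and $\ged$, then pushing the shifts through the derived functors $\R\Gamma_{\fm}(-)$ and $\Bbbk\otimes_A^L(-)$ for $\CMreg$ and $\Torreg$, and finally handling the sign reversal under the contravariant $\R\Hom_A(-,\Bbbk)$ for $\Extreg$. The bookkeeping is all correct, including the point that contravariance turns $[1]$ into $[-1]$ and $(1)$ into $(-1)$, which is where a careless argument would slip.
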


For a cochain complex
\[
X = \hspace{.2in} \cdots \rightarrow X^{s-1} \rightarrow 
X^s\rightarrow X^{s+1} \rightarrow \cdots,
\]
we denote the {\it brutal truncations} of $X$ by
\[X^{\geq s} := \hspace{.2in} \cdots \to 0\to \cdots \to 0 
\rightarrow X^{s} \rightarrow X^{s+1} \rightarrow \cdots 
\]
and 
\[X^{\leq s}:= \hspace{.2in}  \cdots \rightarrow X^{s-1} 
\rightarrow X^{s} \rightarrow  0 \to \cdots \to 0\to \cdots.
\]
(We remark that the notation $X^{\geq s}$ and $X^{\leq s}$ may mean different truncations in other papers.)

\begin{theorem}
\label{zzthm2.8}
Let $A$ be a noetherian connected graded algebra with a 
balanced dualizing complex. Let $X$ be a nonzero object in 
$\D^{\bb}_{\fg}(A\Gr)$ with finite projective dimension. Then 
$$\CMreg(X)=\Torreg(X)+\CMreg(A).$$
\end{theorem}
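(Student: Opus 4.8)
The plan is to prove the two inequalities separately. The inequality $\CMreg(X)\le \Torreg(X)+\CMreg(A)$ is exactly Theorem~\ref{zzthm2.5}(2), so all the content lies in the reverse inequality $\CMreg(X)\ge \Torreg(X)+\CMreg(A)$, and this is where finite projective dimension enters. Since $X\in\D^{\bb}_{\fg}(A\Gr)$ has finite projective dimension, I would first replace $X$ by its minimal free resolution $P$, a \emph{bounded} complex of finitely generated frees with $P^{-i}=\bigoplus_j A(-j)^{\beta_{ij}}$, where $\beta_{ij}=\dim_{\Bbbk}\Tor^A_i(\Bbbk,X)_j$. Minimality means the differentials of $P$ have entries in $\fm$, and $\Torreg(X)=r:=\sup\{j-i\mid \beta_{ij}\neq 0\}$. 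Writing $c:=\CMreg(A)=\deg\R\Gamma_\fm(A)$, the goal becomes to produce $n,m$ with $H^n_\fm(X)_m\neq 0$ and $n+m=c+r$.

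Next I would analyze the hypercohomology spectral sequence obtained by applying $\R\Gamma_\fm$ to $P$,
$$'E_1^{-i,t}=H^t_\fm(P^{-i})=\bigoplus_j H^t_\fm(A)(-j)^{\beta_{ij}}\ \Longrightarrow\ H^{t-i}_\fm(X),$$
whose $d_1$ is induced by the differential of $P$; it converges because $P$ is bounded and $\cd(\Gamma_\fm)<\infty$ under Hypothesis~\ref{zzhyp1.3}. The organizing principle is the \emph{weight} $w:=n+m$ (total cohomological degree plus internal degree): every page differential preserves internal degree and raises total cohomological degree by $1$, hence raises $w$ by exactly $1$. A summand $H^t_\fm(A)_{m-j}^{\beta_{ij}}$ is nonzero only when $t+(m-j)\le c$ and $j-i\le r$, so its weight $t+m-i\le c+j-i\le c+r$, with equality forcing both $j-i=r$ (extreme slope in the resolution) and $t+(m-j)=c$ (top line of $\R\Gamma_\fm(A)$). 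Thus $c+r$ is the maximal weight on $'E_1$, so every outgoing differential from a weight-$(c+r)$ class lands in weight $c+r+1=0$ and vanishes; each such class is a permanent cycle, and it remains only to exhibit one that is not a boundary.

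The main obstacle, and the crux of the argument, is ruling out \emph{incoming} differentials. Here I would make two careful choices: let $i_0$ be the \emph{largest} $i$ with $\beta_{i,\,r+i}\neq 0$, and $t_0$ the \emph{largest} $t$ with $H^t_\fm(A)_{c-t}\neq 0$, and focus on the class in $H^{t_0}_\fm(A)_{c-t_0}^{\beta_{i_0,\,r+i_0}}$ at position $(-i_0,t_0)$ and internal degree $m=c-t_0+r+i_0$. A weight count shows any incoming $d_k$ must come from a free summand of $P^{-(i_0+k)}$ of degree $j\in\{r+i_0+k-1,\ r+i_0+k\}$. The slope-$r$ case $j=r+i_0+k$ is excluded by maximality of $i_0$ (no extreme-slope generator survives past homological degree $i_0$); the slope-$(r-1)$ case $j=r+i_0+k-1$ requires a nonzero class on the top line in cohomological degree $t_0+k-1$, namely $H^{t_0+k-1}_\fm(A)_{c-(t_0+k-1)}$, which for $k\ge 2$ vanishes by maximality of $t_0$, and for $k=1$ is realized by a differential entry between two free summands of the \emph{same} internal degree $r+i_0$, hence a degree-$0$ scalar, which vanishes by minimality. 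Therefore this class has neither incoming nor outgoing differentials, survives to $'E_\infty$, and gives $H^{t_0-i_0}_\fm(X)_m\neq 0$ with $(t_0-i_0)+m=c+r$. This yields $\CMreg(X)\ge \Torreg(X)+\CMreg(A)$ and, together with Theorem~\ref{zzthm2.5}(2), the desired equality. I expect the only delicate points to be the weight/slope bookkeeping in the incoming-differential count and the verification of convergence; everything else is formal.
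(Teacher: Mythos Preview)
Your argument is correct and takes a genuinely different route from the paper's proof. The paper proceeds by induction on the length $s$ of the minimal free resolution: peeling off $F^0$ gives a distinguished triangle $G \to F^0 \to F \to G[1]$ with $G = F^{\le -1}[-1]$, and the two cases $\Torreg(F) = \Torreg(F^0)$ versus $\Torreg(F) = \Torreg(G)-1$ are handled by chasing the associated long exact sequence in local cohomology, invoking the induction hypothesis on $G$ and $F^0$; in Case~1 the key point is that the composite $\phi_1\circ\phi_2: F^{-1}\to A(-a)$ vanishes by minimality. Your approach instead runs the hypercohomology spectral sequence for $\R\Gamma_\fm$ on the minimal free resolution all at once, using the weight $w=n+m$ to isolate an extremal class. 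Your two maximality choices (largest $i_0$ with a slope-$r$ Betti number, largest $t_0$ on the top line $H^t_\fm(A)_{c-t}\neq 0$) are precisely what kills the incoming differentials, and your $k=1$ observation---that the relevant component of $d_1$ is a degree-$0$ matrix entry, hence zero by minimality---is the spectral-sequence incarnation of the paper's $\phi_1\circ\phi_2=0$ step. Your route is more conceptual and avoids the case split, at the price of spectral-sequence bookkeeping; the paper's induction is more elementary but longer. One small point worth making explicit in your write-up: the existence of $i_0$ and $t_0$ uses that $X$ has finite projective dimension (so only finitely many $\beta_{ij}$ are nonzero and $r$ is attained) and that $\cd(\Gamma_\fm)<\infty$ under Hypothesis~\ref{zzhyp1.3} (so $c=\CMreg(A)$ is attained at some finite $t_0$).
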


\begin{proof} 
By Lemma~\ref{zzlem2.7}, we may assume that $X^n=0$ for 
all $n\geq 1$. Let $F$ be a minimal free resolution of $X$, 
which we write as
$$F: \qquad \cdots \to 0\to F^{-s}\xrightarrow{d^{-s}} 
\cdots \to F^{-1} \xrightarrow{d^{-1}} F^0\to 0\to \cdots$$
for some $s\geq 0$. We will prove the assertion by induction 
on $s$, which is (an upper bound on) the projective dimension 
of $X$. 

For the initial step, we assume that $s=0$, or 
$X= F^0= \bigoplus_{i} A(-a_i)$ for some integers $a_i$. 
In this case, it is clear that 
$$\Torreg(X)=\Torreg(F^0)=\max_i\{a_i\}=:a.$$
By Lemma~\ref{zzlem2.7},
$$\begin{aligned}
\CMreg(X)& =\CMreg\left(\bigoplus_{i} A(-a_i)\right)=
\max_i\left\{\CMreg(A(-a_i))\right\}\\
&=\CMreg(A)+\max_i\{a_i\}=\CMreg(A)+\Torreg(X),
\end{aligned}
$$ 
so the assertion holds for $X=F^0$ as required.

For the inductive step, assume that $s>0$.
Let $F^{\leq -1}$ be the brutal truncation of the complex $F$
$$F^{\leq -1}:\qquad \cdots \to 0\to F^{-s}\to \cdots \to F^{-1} 
\to 0\to 0\to \cdots,$$
which is obtained by replacing $F^0$ by 0. We have a distinguished
triangle in $\D^{\bb}_{\fg}(A\Gr)$
\begin{equation}
\label{E2.8.1}\tag{E2.8.1}
F^0\xrightarrow{\; f\; } F \to F^{\leq -1}\to F^0[1]
\end{equation}
where $F^0$ is viewed as a complex concentrated at position 0 and 
$f$ is the inclusion. Let $G$ be the
complex $F^{\leq -1}[-1]$, which is a minimal free complex concentrated 
in position $\{-(s-1), \cdots, 0\}$. Then we have a distinguished 
triangle in $\D^{\bb}_{\fg}(A\Gr)$
\begin{equation}
\label{E2.8.2}\tag{E2.8.2}
G\xrightarrow{\;\phi_2\;} 
F^0\xrightarrow{\; f\; }  F \to G[1]
\end{equation}
obtained by rotating \eqref{E2.8.1}. By the induction hypothesis,
the assertion holds for both $G$ and $F^0$. We need to show 
that the assertion holds for $X$, or equivalently, for $F$, as 
$F\cong X$ in $\D^{\bb}_{\fg}(A\Gr)$. By Theorem~\ref{zzthm2.5}(2), 
the assertion is equivalent to 
\begin{equation}
\label{E2.8.3}\tag{E2.8.3}
\CMreg(F)\geq \Torreg(F)+\CMreg(A).
\end{equation}

We fix the following temporary notation:
$$a=\Torreg(F^0), \quad b=\Torreg(G), \quad 
c=\Torreg(F)=\Torreg(X),$$
and 
$$\alpha=\CMreg(F^0), \quad \beta=\CMreg(G), \quad
\gamma=\CMreg(F)=\CMreg(X).$$
By definition and the minimality of $F$, we have
$$
c=\max\left\{ \Torreg(F^0),\Torreg(F^{\leq -1})\right\}
=\max\{a,b-1\}.
$$
By the above equation, we have,
\begin{equation}
\label{E2.8.4}\tag{E2.8.4}
a\leq c, \quad
{\text{and}} \quad b-1\leq c,
\end{equation}
and $c$ must equal to either $a$ or $b-1$.
There are two cases:
\begin{enumerate}
\item[] \textbf{Case 1.} $c = a$ and $a \geq b$,
\item[] \textbf{Case 2.} $c = b - 1$, 
\end{enumerate}

\noindent\textbf{Case 1:} 
Suppose that $c = a$ and $a \geq b$. By the definition of $a$, 
we have $F^0=A(-a) \oplus C^0$ where $C^0$ is a graded free left 
$A$-module. Let $\phi_1: F^0\to A(-a)$ be the corresponding 
projection. By the definition of $\alpha:=\CMreg(F^0)$, 
there is an integer  $j\in {\mathbb Z}$ such that 
$H^j_{\fm}(F^0)_{\alpha -j}\neq 0$ and the induced
projection
$$\tau_1:=H_{\fm}^j(\phi_1)_{\alpha-j}:
\quad H^j_{\fm}(F^0)_{\alpha -j}\to H^j_{\fm}(A(-a))_{\alpha -j}$$
is nonzero. The triangle \eqref{E2.8.2} gives rise to a long 
exact sequence 
\begin{equation}
\label{E2.8.5}\tag{E2.8.5}
\cdots 
\to  H^j_{\fm}(G)_{\alpha-j}\to H^j_{\fm}(F^0)_{\alpha -j}
\to H^{j}_{\fm}(F)_{\alpha-j}\to H^{j+1}_{\fm}(G)_{\alpha-j}
\to \cdots.
\end{equation}
If
$$\tau_2:=H_{\fm}^j(\phi_2)_{\alpha-j}:
\quad H^j_{\fm}(G)_{\alpha- j}\to H^j_{\fm}(F^0)_{\alpha -j}$$
is not surjective, then \eqref{E2.8.5} implies
that $H^{j}_{\fm}(F)_{\alpha-j}\neq 0$. By definition, the assumption that 
$a=c$, and the induction hypothesis, we have 
$$\begin{aligned}
\CMreg(F) &\geq \alpha-j +j=\alpha\\
&=a+\CMreg(A) \qquad \qquad {\text{(induction hypothesis)}}\\
&=c+\CMreg(A) \qquad \qquad {\text{(assumption in Case 1)}}\\
&=\Torreg(F)+\CMreg(A)
\end{aligned}$$
as desired.

It remains to show the claim that $\tau_2$ is not surjective. 
Assume to the contrary that $\tau_2$ is surjective. Then so 
is the composed map
$$\tau_3:= \tau_1\circ \tau_2: 
H^j_{\fm}(G)_{\alpha-j}\to H^j_{\fm}(A(-a))_{\alpha -j}.$$
In particular, $\tau_3$ is not a zero map. Note that 
$$\tau_3= \tau_1\circ \tau_2=
H_{\fm}^j(\phi_1)_{\alpha- j}\circ H_{\fm}^j(\phi_2)_{\alpha- j}
=H_{\fm}^j(\phi_1\circ \phi_2)_{\alpha-j},$$
which implies that $\phi_3:=\phi_1\circ \phi_2$ is nonzero
in $\D^{\bb}_{\fg}(A\Gr)$. Consider $F$ as the cone of the map $\phi_2:
G\to F^0$; it is clear that $\phi_2$ is the map from the top
row $G$ to the middle row $F^0$ in the following 
diagram
$$\begin{CD}
F^{-s} @>>> \cdots @>>> F^{-2} @>>> F^{-1} @>>> 0\\
@V 0 VV @. @V 0VV @VV d^{-1}=\phi_2 V\\
0 @>>> \cdots @>>> 0 @>>> F^{0} @>>> 0\\
@V 0 VV @. @V 0VV @VV \phi_1 V\\
0 @>>> \cdots @>>> 0 @>>> A(-a) @>>> 0.
\end{CD}
$$
Since $b\leq a$, $F^{-1}$, which is the zeroth term in
the minimal free resolution of $G$, is generated in degree 
$\leq a$. Since $F$ is a minimal free resolution 
$\im \phi_2=\im d^{-1}\subseteq \fm F^0$,
and consequently, $\im \phi_3\subseteq \fm A(-a)$. For every
generator $x$ in $F^{-1}$, which has degree $\leq a$, 
the image $\phi_3(x)$ lies in $\fm A(-a)$, which 
has degree at least $a+1$. Therefore $\phi_3(x)=0$. 
This implies that $\phi_3(F^{-1})=0$, yielding a contradiction.
So we have proved the claim and finished the proof in Case 1.\\

\noindent\textbf{Case 2:} Suppose $c = b - 1$. 
By the definition of $\beta:=\CMreg(G)$, there is an integer 
$j\in {\mathbb Z}$ such that $H^j_{\fm}(G)_{\beta-j}\neq 0$. 
The triangle \eqref{E2.8.2} gives rise to a long exact sequence
\begin{equation}
\label{E2.8.6}\tag{E2.8.6}
\cdots \to H^{j-1}_{\fm}(F)_{\beta- j}\to
H^j_{\fm}(G)_{\beta-j}\to H^j_{\fm}(F^0)_{\beta -j}\to \cdots.
\end{equation}
By the  induction hypothesis, the assumption that $c<b$, 
\eqref{E2.8.4}, and the definitions of $\alpha, \beta, a$ and $b$, 
we have
$$\begin{aligned}
\beta&=\CMreg(G)\\
&=\Torreg(G)+\CMreg(A) \qquad \qquad \qquad {\text{(induction hypothesis)}} \\
&=b+\CMreg(A)>c+\CMreg(A)\\
&\geq a+\CMreg(A) 
=\CMreg(F^0)  \qquad \qquad {\text{(induction hypothesis)}}\\
&=\alpha,
\end{aligned}
$$
which implies that $H^j_{\fm}(F^0)_{\beta -j}=0$. 
Since $H^j_{\fm}(G)_{\beta-j}\neq 0$ by definition,
\eqref{E2.8.6} implies that $H^{j-1}_{\fm}(F)_{\beta-j}\neq 0$.
By definition, $\CMreg(F)\geq \beta-j+(j-1)=\beta-1$.
This inequality implies that
$$\begin{aligned}
\CMreg(F)&\geq \beta-1=\CMreg(G)-1\\
&=\Torreg(G)+\CMreg(A)-1\qquad  {\text{(induction hypothesis)}}\\
&=b+\CMreg(A)-1=(b-1)+\CMreg(A)\\
&=c+\CMreg(A)\\
&=\Torreg(F)+\CMreg(A),
\end{aligned}
$$
as desired, see \eqref{E2.8.3}. 


Combining these two cases completes the proof.
\end{proof}

We conclude this section with the following remark.

\begin{remark}
\label{zzrem2.9}
Let $A$ and $B$ be noetherian connected graded algebras 
with balanced dualizing complexes. Let $P$ 
{\rm{(}}resp. $Q${\rm{)}} be a nonzero object in
$\D^{b}_{\fg} (A \Gr)$
{\rm{(}}resp. $\D^{b}_{\fg} (B \Gr)${\rm{)}}. 
Then 
$$\CMreg({}_{(A\otimes B)} P\otimes Q) 
= \CMreg({}_{A} P) + \CMreg({}_{B} Q).$$
The proof of the above equality is similar to the 
proof of Lemma~\ref{zzlem2.3} using a version of 
the K{\" u}nneth formula for local cohomology, so
it is omitted.
\end{remark}

\section{Artin--Schelter Regularities}
\label{zzsec3}
In this section we prove results that are related to AS 
regular algebras. Recall from Definition~\ref{zzdef0.6}
that {\it the AS regularity} of $A$ is defined to be
$$\ASreg(A) = \Torreg(\Bbbk) + \CMreg(A).$$
In general, $\ASreg(A)$ can be any positive integer,
as the next example shows.

\begin{example}
\label{zzex3.1}
Let $d \geq 2$ be an integer and let $B = \Bbbk[x]/(x^d)$ with $\deg x=1$. 
By Example
\ref{zzex2.1}(2), $\CMreg(B)=d-1$. By an easy 
computation, 
$$\deg \Tor^B_n(\Bbbk, \Bbbk)=\begin{cases} 0 &n=0,\\
\lfloor\frac{n}{2}\rfloor d & {\text{$n>0$ is even}},\\
1+\lfloor\frac{n}{2}\rfloor d & {\text{$n>0$ is odd}}.
\end{cases}$$
As a consequence, if $d>2$, then $\Torreg({}_B\Bbbk)=\infty$
and $\ASreg(B)=\infty$. If $d=2$, then 
$\Torreg({}_B\Bbbk)=0$ and $\ASreg(B)=d-1=1$. 

Now let $d = 2$ so $B = \Bbbk[x]/(x^2)$ and let $C$ be the algebra $B^{\otimes m}$ for a positive
integer $m$. Then by Example~\ref{zzex2.1}(2), $\CMreg(C)=m$.
Since $C$ is Koszul, $\Torreg({}_C\Bbbk)=0$. Therefore 
$\ASreg(C)=m$.
\end{example}

The goal of this section is to prove Theorem~\ref{zzthm0.8}.
We begin with a generalization of a nice result of Dong and Wu 
\cite[Theorem 4.10]{DW} that provides the first step towards 
to the proof of Theorem~\ref{zzthm0.8}.

\begin{theorem}
\label{zzthm3.2}
Let $A$ be a noetherian connected graded algebra with balanced
dualizing complex. Then the following are equivalent:
\begin{enumerate}
\item[(i)]
$A$ is AS regular.
\item[(ii)]
$A$ is Cohen--Macaulay and $\ASreg(A)=0$.
\end{enumerate}
\end{theorem}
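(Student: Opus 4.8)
The plan is to prove the two implications separately, treating $(ii)\Rightarrow(i)$ as the substantial direction. For $(i)\Rightarrow(ii)$: if $A$ is AS regular of type $(d,\bfl)$, then its balanced dualizing complex is a shift of the free module, $R=\R\Gamma_{\fm}(A)'\cong A(\bfl)[d]$, so that $\R\Gamma_{\fm}(A)\cong A'(-\bfl)[-d]$ has cohomology concentrated in cohomological degree $d$; hence $A$ is $d$-Cohen--Macaulay. Moreover $\ASreg(A)=\Torreg(\Bbbk)+\CMreg(A)=(\bfl-d)+(d-\bfl)=0$ by \eqref{E2.4.2} and Example~\ref{zzex2.1}(3). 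This gives (ii).

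For $(ii)\Rightarrow(i)$, assume $A$ is $s$-Cohen--Macaulay with $\ASreg(A)=0$, that is, $\Torreg(\Bbbk)=-\CMreg(A)$. Since $A$ is $s$-CM, $\R\Gamma_{\fm}(A)\cong H^s_{\fm}(A)[-s]$ and the balanced dualizing complex is $R=\omega_A[s]$, where $\omega_A:=H^s_{\fm}(A)'$ is the finitely generated canonical module. From the dualizing-complex identity $\RHom_A(R,R)\cong A$ I get $\RHom_A(\omega_A,\omega_A)\cong A$, and Local Duality (Theorem~\ref{zzthm1.4}) applied to $\omega_A$ yields $\R\Gamma_{\fm}(\omega_A)\cong A'[-s]$; in particular $\omega_A$ is itself $s$-CM, so $\depth_A\omega_A=\depth_A A=s$. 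The plan is then: (a) prove that $A$ has finite global dimension; and (b) deduce AS regularity from (a). Step (b) is clean. Finite global dimension makes every bounded finitely generated complex perfect, so $R$ is perfect and hence $\pdim_A\omega_A<\infty$; the graded Auslander--Buchsbaum equality forces $\pdim_A\omega_A=\depth_A A-\depth_A\omega_A=0$, so $\omega_A$ is free, and writing $\omega_A\cong\bigoplus_k A(a_k)$ the identity $\RHom_A(\omega_A,\omega_A)\cong\bigoplus_{k,l}A(a_k-a_l)\cong A$ forces a single summand. Thus $\omega_A\cong A(\bfl)$ and $R\cong A(\bfl)[s]$, which is exactly the statement that $A$ is AS Gorenstein of type $(s,\bfl)$; together with finite global dimension and finite GK dimension (which for a Cohen--Macaulay algebra with balanced dualizing complex equals $s$), this makes $A$ AS regular.

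The main obstacle is step (a): extracting \emph{finite global dimension} from the numerical hypothesis $\CMreg(A)=-\Torreg(\Bbbk)$. Finite Tor-regularity alone does not bound projective dimension (for instance $\Bbbk[x]/(x^2)$ is Koszul of infinite global dimension), so the Cohen--Macaulay hypothesis together with the \emph{exact equality} $\ASreg(A)=0$, rather than mere finiteness of $\Torreg(\Bbbk)$, must be used essentially; note that $\Bbbk[x]/(x^2)$ is excluded precisely because its $\ASreg$ equals $1$. I would argue by contradiction: assuming $\gldim A=\infty$, so $\Tor^A_i(\Bbbk,\Bbbk)\neq 0$ for all $i$, I would track the top and bottom internal degrees $t_i=\deg\Tor^A_i(\Bbbk,\Bbbk)$ and $\ged\Tor^A_i(\Bbbk,\Bbbk)$ of the minimal free resolution of $\Bbbk$. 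Minimality forces both to increase with $i$, while $\Torreg(\Bbbk)=-\CMreg(A)$ confines the top degrees to the band $i\le t_i\le i+\Torreg(\Bbbk)$; the goal is to feed this band, via Local Duality, against the single-degree concentration of $\R\Gamma_{\fm}(A)$ at cohomological degree $s$ (with top internal degree $\deg H^s_{\fm}(A)=\CMreg(A)-s$) so as to produce a nonzero contribution to $H^i_{\fm}$ in a forbidden cohomological degree once $i$ is large, contradicting $s$-Cohen--Macaulayness. Equivalently, one must show directly that the dualizing complex $R=\omega_A[s]$ is perfect. I expect this to be the technically delicate point, and it is exactly where the Koszul argument of Dong--Wu must be upgraded to accommodate the case $\Torreg(\Bbbk)>0$.
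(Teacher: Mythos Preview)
Your handling of $(i)\Rightarrow(ii)$ and your step (b) are both fine; step (b) via Auslander--Buchsbaum is a pleasant alternative to the paper's route, which simply cites \cite[Theorem 0.3]{Zh} to obtain AS Gorenstein from noetherianity and finite global dimension, after observing that noetherian connected graded algebras of finite global dimension have finite GK dimension. One quibble: your claim that $\GKdim A=s$ for an $s$-CM algebra with balanced dualizing complex is not automatic in the noncommutative setting, so your parenthetical justification of finite GK dimension needs a different argument.

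Step (a), however, is a genuine gap. You correctly diagnose that the \emph{equality} $\ASreg(A)=0$, not merely finiteness of $\Torreg(\Bbbk)$, must be used essentially, but your proposed mechanism---producing nonzero local cohomology in a ``forbidden cohomological degree'' so as to contradict $s$-Cohen--Macaulayness---is neither what the paper does nor obviously workable: $s$-CM is a \emph{hypothesis} to be exploited, not a conclusion to be contradicted. Here is the missing idea. Suppose $\pdim\Bbbk=\infty$ and let $F\to\Bbbk$ be the minimal free resolution with syzygies $Z_j$. Since $\omega_A$ has finite injective dimension $s$ and $\Ext^i_A(\Bbbk,\omega_A)=0$ for $i\neq s$ (balancedness), dimension-shifting gives $\Ext^i_A(Z_{j-1},\omega_A)=0$ for all $i\neq 0$ once $j\geq s$; by Local Duality, each such $Z_{j-1}$ is itself $s$-CM. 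The short exact sequence $0\to Z_j\to F_j\to Z_{j-1}\to 0$ then yields $\deg H^s_{\fm}(Z_j)\leq\deg H^s_{\fm}(F_j)$, i.e.\ $\CMreg(Z_j)\leq\CMreg(F_j)$. Now comes the essential use of $\ASreg(A)=0$: by Corollary~\ref{zzcor2.6}(3) one has $\Torreg(X)=\CMreg(X)-\CMreg(A)$ for \emph{every} $X\in\D^{\bb}_{\fg}(A\Gr)$, so $\Torreg(Z_j)\leq\Torreg(F_j)$. Since $F_j$ is free, $\Torreg(F_j)=t^A_j(\Bbbk)$, while $\Torreg(Z_j)\geq t^A_0(Z_j)=t^A_{j+1}(\Bbbk)$; hence $t^A_{j+1}(\Bbbk)\leq t^A_j(\Bbbk)$ for all $j\geq s$. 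An eventually non-increasing sequence cannot satisfy $t^A_j(\Bbbk)\geq j$ (which holds whenever $F_j\neq 0$), and this is the desired contradiction.
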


When $A$ is Koszul, then \cite[Theorem 4.10]{DW} can be recovered
from Theorem~\ref{zzthm3.2} since standard AS Gorenstein algebras
satisfy (ii) in the above theorem.

\begin{proof}[Proof of Theorem~\ref{zzthm3.2}]
We first prove that (i) implies (ii). Suppose that $A$ is AS 
regular of type $(d,\bfl)$. It is well-known that $A$ is 
Cohen--Macaulay. By \eqref{E2.4.2}, $\ASreg(A)=0$.

We now show that (ii) implies (i). Let $A$ be noetherian 
connected graded with balanced dualizing complex. If 
$\pdim {}_A \Bbbk<\infty$, then $A$ has finite global 
dimension. Since $A$ is noetherian, if it has finite 
global dimension, then it has finite GK dimension. By 
\cite[Theorem 0.3]{Zh}, $A$ is AS Gorenstein and so 
$A$ is AS regular by definition. Hence, it suffices to 
show that $\pdim \Bbbk<\infty$. 

Let
$$F: \quad \cdots  \to F_{i}\to \cdots \to F_0\to \Bbbk\to 0$$
be a minimal free resolution of the trivial left $A$-module
$\Bbbk$. Since $A$ is Cohen--Macaulay, by 
\cite[Theorem 6.3]{VdB}, the balanced dualizing 
complex over $A$ is 
$$R:=\R\Gamma_{\fm}(A)'\cong \omega[d]$$
where $\omega$ is a dualizing $A$-bimodule and 
$d:={\rm{lcd}}(A)$. By the Local Duality Theorem 
(Theorem~\ref{zzthm1.4}), for every complex $X$ of 
graded left  $A$-modules,
\begin{equation}
\label{E3.2.1}\tag{E3.2.1}
\R\Gamma_{\fm}(X)'\cong \R\Hom_A(X, R)\cong 
\R\Hom_A(X,\omega[d]).
\end{equation}
Since the dualizing complex has finite injective dimension, 
we obtain that
$$\injdim (\omega) = d <\infty.$$
By the consequence  of the Local Duality Theorem 
\eqref{E3.2.1}, $\Gamma_{\fm}$ has cohomological dimension 
$d$.

For each $j\geq 0$, let $Z_j(F)$ denote the $j$th syzygy of 
the complex $F$. We will show that $Z_j(F)=0$ for $j \gg 0$, 
which implies that $\pdim \Bbbk<\infty$ as desired. Assume 
to the contrary that $Z_j(F)\neq 0$ for all $j\geq 0$. Note 
that
$$\cdots \to F_{j+2}\to F_{j+1}\to Z_j(F)\to 0$$
is a minimal free resolution of $Z_j(F)$. \\

\noindent 
\textbf{Claim.} For all $j \geq d$,
$t^A_{j+1}(\Bbbk) \leq t^A_{j}(\Bbbk)$.

\noindent
{\it Proof of the claim.}
By the balanced condition, $\Ext^i_A(\Bbbk, \omega)=0$
for all $i\neq d=\injdim \omega$. By induction on syzygies,
we have $\Ext^i_A(Z_{d-1}(F),\omega)=0$ for all $i\neq 0$.
Further, by induction, one sees that $\Ext^i_A(Z_{j-1}(F), \omega)=0$
for all $i\neq 0$ and all $j\geq d$. From now on, we fix $j\geq d$. 
By local duality \eqref{E3.2.1}, we obtain that
$H^i_{\fm}(Z_{j-1}(F))=0$ for all $i\neq d$. Since $A$
is Cohen--Macaulay, $H^i_{\fm}(F_j)=0$ for all $i\neq d$.
Applying $\R\Gamma_{\fm}(-)$ to the short exact sequence
$$0\to Z_{j}(F)\to F_j\to Z_{j-1}(F)\to 0,$$
we obtain a long exact sequence, which has only three 
nonzero terms yielding a short exact sequence
$$0\to H^d_{\fm}(Z_j(F))\to H^d_{\fm}(F_j)\to H^d_{\fm}(Z_{j-1}(F))
\to 0.$$
The above short exact sequence implies that 
$\deg H^d_{\fm}(Z_j(F))\leq \deg H^d_{\fm}(F_j)$.
By definition, $\CMreg(Z_j(F))\leq \CMreg(F_j)$.
Since $\ASreg(A) = 0$,
 Theorem~\ref{zzcor2.6}(3), for any $X\in \D^{\bb}_{\fg}(A\Gr)$,
$$\Torreg(X)=\CMreg(X)+c,$$
where $c = \Torreg({}_A \Bbbk) = -\CMreg(A)$. Then 
$$\begin{aligned}
t^A_{j+1}(\Bbbk)&= t^A_{0}(F_{j+1}) = t^A_{0}(Z_j(F))\\
&= t^A_{0}(Z_j(F))-0\\
& \leq \Torreg(Z_j(F))=\CMreg(Z_j(F))+c\\
&\leq \CMreg(F_j)+c=\Torreg(F_j)\\
&=\sup \{t^A_{i}(F_j)-i\mid i\in {\mathbb Z}\}\\
&= t^A_{0}(F_j)-0=t^A_{0}(F_j)\\
&=t^A_{j}(\Bbbk)
\end{aligned}
$$
as desired. This finishes the proof of the claim.\\

Since $t^A_{j}(\Bbbk)\geq j$ when $F_j\neq 0$, then for $j\gg 0$, 
the claim contradicts the fact that $t^A_{j}(\Bbbk)\geq j$. 
Therefore we obtain a contradiction, and hence $\pdim \Bbbk<\infty$
as required. 
\end{proof}

Next we work to complete the proof of Theorem 
\ref{zzthm0.8}, which is a generalization of 
\cite[Theorem 5.4]{DW}.
We need the following lemma \cite[Lemma 5.3]{DW}. 
Let $R$ be a nonzero object in $\D^\bb(A\Gr)$ 
that will be a balanced dualizing complex
over $A$ in the proof of Theorem~\ref{zzthm0.8}.
Let $f: F\to R$ be a minimal free resolution 
of $R$. Since $R\in \D^\bb(A\Gr)$, each term in
$F$ is a finitely generated free graded left $A$-module.
Set $s=\inf R$. Then $f$ naturally induces a morphism 
from the truncated complex $F^{\geq s}\to R$, 
denoted by $\overline{f}$.

\begin{lemma}
\label{zzlem3.3} Retain the above notation.
\begin{enumerate}
\item[(1)] \cite[Lemma 5.3]{DW}
There is a quasi-isomorphism $g: F^{\leq s-1}\to 
\cone(\overline{f})$.
\item[(2)]
$H^i(\cone(\overline{f}))=\begin{cases} 
F^{s-1}/\im(d^{s-2}_X), & \text{if } i=s-1,\\
0, & \text{if } i\neq s-1.
\end{cases}$.
\item[(3)]
$F^{\leq s-1}$ is a minimal free resolution 
of $\cone(\overline{f})$.
\end{enumerate}
\end{lemma}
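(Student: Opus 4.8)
The plan is to exploit the short exact sequence of complexes produced by the two brutal truncations,
$$0 \to F^{\geq s} \xrightarrow{\;\iota\;} F \to F^{\leq s-1} \to 0,$$
where $F^{\geq s}$ is a subcomplex, $F^{\leq s-1}=F/F^{\geq s}$ is the quotient, and $\overline{f}=f\circ\iota$. First I would record the relevant cohomology. Since $f$ is a quasi-isomorphism and $s=\inf R$, we have $H^i(F)=H^i(R)=0$ for $i<s$, and brutal truncation only alters the differential entering degree $s$; hence $H^i(F^{\geq s})=H^i(R)$ for $i>s$, $H^s(F^{\geq s})=\ker d^s$, and $H^i(F^{\geq s})=0$ for $i<s$. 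Composing $H^\bullet(\iota)$ with the isomorphism $H^\bullet(f)$ shows $\overline{f}$ induces an isomorphism on cohomology in degrees $>s$ and is surjective in degree $s$ with kernel $\im d^{s-1}$.

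For part (2), I would run the long exact cohomology sequence of the triangle $F^{\geq s}\xrightarrow{\overline{f}}R\to\cone(\overline{f})\to F^{\geq s}[1]$. In degrees $i\geq s$ the map $H^i(\overline{f})$ is surjective and $H^{i+1}(\overline{f})$ injective, forcing $H^i(\cone(\overline{f}))=0$; in degrees $i\leq s-2$ the neighboring terms $H^i(R)$ and $H^i(F^{\geq s})$ both vanish, giving $H^i(\cone(\overline{f}))=0$; and in degree $s-1$ the sequence collapses to $0\to H^{s-1}(\cone(\overline{f}))\xrightarrow{\delta}\ker d^s\xrightarrow{H^s(\overline{f})}H^s(R)$, identifying $H^{s-1}(\cone(\overline{f}))$ with $\ker H^s(\overline{f})=\im d^{s-1}$. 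Exactness of $F$ at position $s-1$ (valid because $H^{s-1}(F)=0$) then gives $\im d^{s-1}\cong F^{s-1}/\ker d^{s-1}=F^{s-1}/\im d^{s-2}$, which is the asserted formula.

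For part (1), instead of merely citing \cite[Lemma 5.3]{DW} I would write the comparison map explicitly: set $g^i=f^i$ for $i\leq s-2$ and $g^{s-1}=(f^{s-1},\pm d^{s-1})\colon F^{s-1}\to R^{s-1}\oplus F^s$, the sign chosen to match the cone differential. A direct check, using that $f$ is a chain map and $d^sd^{s-1}=0$, confirms $g$ commutes with the differentials. Both $F^{\leq s-1}$ and $\cone(\overline{f})$ have cohomology concentrated in degree $s-1$ (for the source because $d^{s-1}=0$ in the truncation, for the target by part (2)), so it suffices to check $H^{s-1}(g)$ is an isomorphism. Chasing the connecting map $\delta\colon H^{s-1}(\cone(\overline{f}))\xrightarrow{\sim}\im d^{s-1}$ shows that $\delta\circ H^{s-1}(g)$ sends the class of $x\in F^{s-1}$ to $d^{s-1}(x)$; since $\ker d^{s-1}=\im d^{s-2}$, this composite is injective with image $\im d^{s-1}$, and as $\delta$ is injective this forces $H^{s-1}(g)$ to be an isomorphism, so $g$ is a quasi-isomorphism.

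Part (3) then follows at once: $F^{\leq s-1}$ consists of finitely generated free modules, and its differentials are exactly the maps $d^i$ of $F$ for $i\leq s-2$ together with the zero map out of $F^{s-1}$, all of which have entries in $\fm$ because $F$ is minimal; hence $F^{\leq s-1}$ is minimal, and by part (1) it is quasi-isomorphic to $\cone(\overline{f})$, so it is a minimal free resolution of $\cone(\overline{f})$. The step I expect to require the most care is part (1): fixing the cone's sign conventions so that $g$ is a genuine chain map, and then verifying through the connecting homomorphism that $H^{s-1}(g)$ is an actual isomorphism rather than merely a map between abstractly isomorphic groups. The cohomology bookkeeping in part (2) and the minimality argument in part (3) are routine once this comparison is established.
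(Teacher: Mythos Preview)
Your proof is correct. The paper's own argument is much terser: it simply cites \cite[Lemma 5.3]{DW} for part (1) and then observes that (2) and (3) are immediate consequences of (1), since once $g$ is known to be a quasi-isomorphism one reads off $H^i(\cone(\overline{f}))\cong H^i(F^{\leq s-1})$ directly from the brutal truncation (zero for $i\neq s-1$ because $H^i(F)=0$ there, and $F^{s-1}/\im d^{s-2}$ at $i=s-1$ because the outgoing differential is killed).

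Your route reverses the logical order: you establish (2) first via the long exact sequence of the triangle, then use the concentration of cohomology in a single degree to reduce the quasi-isomorphism check in (1) to degree $s-1$, where you verify it through the connecting homomorphism. This is more self-contained---you are essentially reproving the cited lemma rather than invoking it---and the explicit formula for $g$ together with the sign discussion is a genuine addition. The paper's ordering is shorter once the citation is accepted, but yours makes the mechanism visible and would survive in a context where \cite{DW} is not available.
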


\begin{proof} Part (1) is a consequence of \cite[Lemma 5.3]{DW}
as $f$ is a quasi-isomorphism.

Parts (2) and (3) follow immediately from part (1).
\end{proof}

Now we are ready to prove Theorem~\ref{zzthm0.8}.

\begin{proof}[Proof of Theorem~\ref{zzthm0.8}] 
By \eqref{E2.4.2}, (i) implies (ii).

We now show that (ii) implies (i). Let $c=-\CMreg(A)=\Torreg(\Bbbk)$. 
By Corollary~\ref{zzcor2.6}(3), 
$$\Extreg(X)=\Torreg(X)=\CMreg(X)+c$$ 
for any nonzero object $X$ in $\D^{\bb}_{\fg}(A\Gr)$.

Let $R$ be a balanced dualizing complex over $A$. Then 
by \cite[Theorem 6.3]{VdB}, $R\cong \R\Gamma_{\fm}(A)'$. 
By assumption, $\CMreg(A)=-c$, so for all $i \in \mathbb{Z}$, 
we have $H^i_{\fm}(A)_{> -(i+c)}=0$. Hence, for all integers 
$i$, $H^i(R)_{<-i+c}=0$, or $H^i(R(c))_{<-i}=0$, and so by 
\cite[Lemma 5.2]{DW}, $\Ext^i_A(R(c),\Bbbk)_{>-i}=0$.
By a degree shift, we obtain that
$\Ext^i_A(R,\Bbbk)_{>-i-c}=0$ for all $i\in {\mathbb Z}$.

Since $R$ is a balanced dualizing complex over $A$, by 
definition, $\R\Gamma_{\fm}(R)\cong A'$, which implies that
$\CMreg(R)=0$. Hence, by the first paragraph, 
$\Extreg(R)=\Torreg(R)=\CMreg(R)+c=c$. By definition of 
$\Extreg(R)$, for all integers $i$, we have
$\Ext^i_A(R,\Bbbk)_{<-i-c}=0$. Combining this with 
the last paragraph, we obtain that
\begin{equation}
\label{E3.3.1}\tag{E3.3.1}
\Ext^i_A(R,\Bbbk)_j \neq 0 \quad {\text{if and only if $j=-i-c$.}}
\end{equation}

Since $R\in \D^{\bb}_{\fg}(A\Gr)$, $R$ has a minimal free resolution
$F\xrightarrow{\cong} R$ such that each term in $F$ is 
a finitely generated free graded left $A$-module. Set 
$$s=\inf R=\inf\{j \mid H^j(R)\neq 0\}.$$
By \cite[Theorem 6.3]{VdB}, $R\cong \R\Gamma_{\fm}(A)'$,
which implies that 
$$s=-\sup\{j \mid H^j_{\fm}(A)\neq 0\}.$$
Since both brutal truncations $F^{\geq s}$ and $F^{\leq s-1}$ of
$F$ are minimal free complexes, \eqref{E3.3.1} implies that
\begin{equation}
\label{E3.3.2}\tag{E3.3.2}
\;\;\Extreg(F^{\geq s})=\begin{cases} c, 
& {\text{if $F^{\geq s}$ is not acyclic}}\\
-\infty, & {\text{if $F^{\geq s}$ is acyclic}}
\end{cases}
\end{equation}
and 
\begin{equation}
\label{E3.3.3}\tag{E3.3.3}
\Extreg(F^{\leq s-1})=\begin{cases} c, 
& {\text{if $F^{\leq s-1}$ is not acyclic}}\\
-\infty, & {if \text{$F^{\leq s-1}$ is acyclic.}}
\end{cases}
\end{equation}

By the choice of $s$, $F^s\neq 0$ and $\Ext^{-s}_A(F^{\geq s},\Bbbk)
=\Hom_A(F^s,\Bbbk)\neq 0$. Hence $F^{\geq s}$ is not acyclic, and 
it follows that  $\Extreg(F^{\geq s})=c$ by \eqref{E3.3.2}. Let $f$ be the 
quasi-isomorphism from $F\to R$. Then $f$ naturally induces 
 a morphism from $F^{\geq s}$ to $R$, denoted by $\overline{f}$. Hence 
we are in the setting of Lemma~\ref{zzlem3.3}.  

We claim that $\overline{f}$ is a quasi-isomorphism. 
Suppose on the contrary that $\overline{f}$ is not a
quasi-isomorphism. Then $Y:=\cone(\overline{f})\not\cong
0$ in $\D^{\bb}_{\fg}(A\Gr)$. Consider the following 
distinguished triangle
$$F^{\geq s} \xrightarrow{\overline{f}} R\to Y(:=\cone(\overline{f}))
\to F^{\geq s}[1].$$
Since $\R\Gamma_{\fm}(R)\cong A'$ in $\D(A^e\Gr)$, we have the
exact sequences 
\begin{equation}
\label{E3.3.4}\tag{E3.3.4}
0\to H^{-1}_{\fm}(Y)\to H^0_{\fm}(F^{\geq s}) \to H^0_{\fm}(R)
\to H^0_{\fm}(Y)\to H^1_{\fm}(F^{\geq s})\to 0,
\end{equation}
and the isomorphism
\begin{equation}
\label{E3.3.5}\tag{E3.3.5}
H^{j-1}_{\fm}(Y)\cong H^j_{\fm}(F^{\geq s}), \quad j\neq 0,1.
\end{equation}

By Lemma~\ref{zzlem3.3}(2), $Y\cong H^{s-1}(Y)[1-s]$ in
$\D^{\bb}_{\fg}(A\Gr)$. By the Local Duality Theorem~\ref{zzthm1.4}, 
we have isomorphisms
$$\begin{aligned}
\R\Gamma_{\fm}(Y)'&\cong \RHom_A(Y, R)
\cong \RHom_A(H^{s-1}(Y)[1-s], R)\\
&\cong \RHom_A(H^{s-1}(Y),R)[s-1].
\end{aligned}$$
By taking the $0$th cohomology, we obtain the following isomorphism
$$H^0_{\fm}(Y)'\cong \Ext^{s-1}(H^{s-1}(Y),R).$$
Since $H^j(R)=0$ for all $j<s$, one sees that
$$H^0_{\fm}(Y)\cong \Ext^{s-1}(H^{s-1}(Y),R)'=0.$$
Since $\CMreg(F^{\geq s})=\Extreg(F^{\geq s})-c=c-c=0$,
$H^{j-1}_{\fm}(Y)_{>-j}=0$ for $j\neq 1$ by \eqref{E3.3.4}
and \eqref{E3.3.5}. By definition, $\CMreg(Y)
\leq -1$. However, Lemma~\ref{zzlem3.3}(3) says that
$F^{\leq s-1}$ is a minimal free resolution of $Y$. 
Hence \eqref{E3.3.3} implies that $\Extreg(Y)
=c$, and consequently, by the first paragraph,
$\CMreg(Y)=c-c=0$. This yields a contradiction. 
Therefore $\overline{f}$ is a quasi-isomorphism and 
$Y\cong 0$.

Since $F^{\leq s-1}$ is a minimal free resolution 
of $Y$, $F^j=0$ for all $j\leq s-1$, which means
that the projective dimension of $R$ is finite. 
By \cite[Theorem 3.6]{DW}, $A$ is AS Gorenstein. 
As a consequence, $A$ is Cohen--Macaulay. It follows
from Theorem~\ref{zzthm3.2} that $A$ is AS regular.
\end{proof}

\begin{corollary}
\label{zzcor3.4} 
Let $A$ be a noetherian connected graded algebra with balanced
dualizing complex. Then the following are equivalent:
\begin{enumerate}
\item[(a)]
$A$ is AS regular.
\item[(b)]
$\Torreg(X)= \CMreg(X) + \Torreg(\Bbbk)$ for every 
nonzero $X\in \D^{\bb}_{\fg}(A\Gr)$. 
\item[(c)]
$\CMreg(X)= \Torreg(X)+ \CMreg(A)$ for every 
nonzero $X\in \D^{\bb}_{\fg}(A\Gr)$. 
\item[(d)]
There is a constant $c$ such that 
$\CMreg(X)= \Torreg(X)+ c$ for every 
nonzero $X\in \D^{\bb}_{\fg}(A\Gr)$. 
\item[(e)]
There is a constant $c$ such that 
$\CMreg(M)= \Torreg(M)+ c$ for every 
nonzero finitely generated graded 
left $A$-module $M$. 
\end{enumerate}
\end{corollary}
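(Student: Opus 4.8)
The plan is to prove the cycle (a) $\Rightarrow$ (b) $\Rightarrow$ (d) $\Rightarrow$ (e) $\Rightarrow$ (a) together with the short detour (a) $\Rightarrow$ (c) $\Rightarrow$ (d), which makes all five conditions equivalent. The genuine content is imported from Theorem~\ref{zzthm0.8}, which identifies AS regularity with the vanishing of $\ASreg(A)=\Torreg(\Bbbk)+\CMreg(A)$; the corollary then merely repackages this numerical criterion into statements quantified over all of $\D^{\bb}_{\fg}(A\Gr)$.

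For (a) $\Rightarrow$ (b) and (a) $\Rightarrow$ (c), I would start from the assumption that $A$ is AS regular, so that $\ASreg(A)=0$ by Theorem~\ref{zzthm0.8}. Corollary~\ref{zzcor2.6}(3) then forces both inequalities of Theorem~\ref{zzthm2.5} to become equalities for every nonzero $X\in\D^{\bb}_{\fg}(A\Gr)$, and these are precisely the identities in (b) and (c). Each of (b) and (c) is already an instance of (d): for (b) take $c=-\Torreg(\Bbbk)$ and for (c) take $c=\CMreg(A)$, giving (b) $\Rightarrow$ (d) and (c) $\Rightarrow$ (d). The implication (d) $\Rightarrow$ (e) is immediate, since a finitely generated graded left $A$-module $M$, regarded as a complex concentrated in homological degree $0$, is a nonzero object of $\D^{\bb}_{\fg}(A\Gr)$, so (e) is just the restriction of (d) to this subclass.

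The one implication with real content is (e) $\Rightarrow$ (a), and the idea is to evaluate the hypothesized equality on two well-chosen test modules in order to pin down $c$ and force $\ASreg(A)=0$. Taking $M=A$ and using $\Torreg(A)=0$ from Example~\ref{zzex2.4}(2) gives $c=\CMreg(A)$, which is finite under Hypothesis~\ref{zzhyp1.3}. Taking $M=\Bbbk=A/\fm$, which is finite-dimensional of degree $0$, gives $\CMreg(\Bbbk)=\deg(\Bbbk)=0$ by Example~\ref{zzex2.1}(1); substituting into (e) yields $0=\Torreg(\Bbbk)+c=\Torreg(\Bbbk)+\CMreg(A)=\ASreg(A)$. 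Note that this equation also forces $\Torreg(\Bbbk)$ to be finite, since its left-hand side is $0$ and $c$ is finite. Thus $\ASreg(A)=0$, and Theorem~\ref{zzthm0.8} gives that $A$ is AS regular, closing the cycle.

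The main obstacle is not really internal to the corollary: all the difficulty is concentrated in Theorem~\ref{zzthm0.8}, on which the equivalence rests. Within the present argument the only delicate point is the choice of test modules in (e) $\Rightarrow$ (a), and the one thing worth double-checking is that the degenerate possibility $\Torreg(\Bbbk)=\infty$ cannot intervene---this is ruled out exactly because $c=\CMreg(A)$ is finite while $\CMreg(\Bbbk)=0$, as recorded above.
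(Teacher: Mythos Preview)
Your proof is correct and follows essentially the same route as the paper: both deduce (a)$\Rightarrow$(b),(c) from Theorem~\ref{zzthm0.8} together with Corollary~\ref{zzcor2.6}(3), note that (b),(c)$\Rightarrow$(d)$\Rightarrow$(e) are trivial, and prove (e)$\Rightarrow$(a) by evaluating the identity at $M=A$ and $M=\Bbbk$ to force $\ASreg(A)=0$, then invoking Theorem~\ref{zzthm0.8}. Your additional remark that the equation $0=\Torreg(\Bbbk)+c$ with $c$ finite rules out $\Torreg(\Bbbk)=\infty$ is a nice point the paper leaves implicit.
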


\begin{proof} 
By Theorem~\ref{zzthm0.8} and Corollary~\ref{zzcor2.6}(3), we have that (a) implies both (b) and (c). It is clear that either (b) or (c) implies (d), and also that (d) implies (e). Hence, we need only show that (e) implies (a).

Suppose there is a constant $c$ such that
$\CMreg(M) = \Torreg(M) + c$ for every nonzero
finitely generated graded left $A$-module $M$.
Since $\Torreg(A)=0$ 
[Example~\ref{zzex2.4}(2)], it follows by setting
$M=A$ that $c=\CMreg(A)$. Since
$\CMreg(\Bbbk)=0$ [Example~\ref{zzex2.1}(1)], 
setting $M=\Bbbk$ implies that $c= -\Torreg(\Bbbk)$.
Therefore 
$$\ASreg(A)=\CMreg(A)+\Torreg(\Bbbk)=c-c=0.$$
Hence, by Theorem~\ref{zzthm0.8}, $A$ is AS regular.
\end{proof}

We remark that Corollary~\ref{zzcor3.4} is a generalization of \cite[Theorem 4.1]{Rom}.

\section{Concavity}
\label{zzsec4}
In this section we use the letters $A$ and $B$ for 
connected graded noetherian algebras, $S$ and $T$ for 
connected graded noetherian AS regular algebras, and 
$F$ and $G$ for general locally finite ${\mathbb N}$-graded 
noetherian algebras.

For a locally finite ${\mathbb N}$-graded noetherian algebra 
$F$, let 
\begin{equation}
\label{E4.0.1}\tag{E4.0.1}
\Phi(F):=\{T \mid {\text{there is a finite map $\phi: T\to F$}}\},
\end{equation}
where $T$ ranges over all connected graded noetherian AS regular
algebras. 

Now we recall Definition~\ref{zzdef0.9}. Let ${\mathcal P}$ be 
any numerical invariant that is defined for locally finite 
${\mathbb N}$-graded noetherian rings (or a subclass of such algebras). 

\begin{enumerate}
\item[(1)]
The {\it ${\mathcal P}$-concavity} of $F$ is defined to be
$$c_{\mathcal P}(F):=\inf_{T \in \Phi(F)}\{ {\mathcal P} (T) \}.$$
If no such $T$ exists, we write $c_{\mathcal P}(F)=\infty$.
The {\it normalized ${\mathcal P}$-concavity} of $F$ is defined 
to be
$$c_{{\mathcal P},-}(F):=c_{\mathcal P}(F)-{\mathcal P}(F).$$
\item[(2)]
The {\it concavity} of $F$ is defined to
$$c(F):= \inf_{T \in \Phi(F)}\{ -\CMreg (T) \}$$
and the {\it normalized concavity} of $F$ is defined
to be  
$$c_{-}(F):=c(F)+\CMreg(F).$$
\end{enumerate}

\begin{proposition}
\label{zzpro4.1}
Let $f: F\to G$ be a finite map of locally finite graded
noetherian algebras. Then $c_{\mathcal P}(F)\geq 
c_{\mathcal P}(G)$. As a consequence, the following hold.
\begin{enumerate}
\item[(1)]
If $F$ is a subalgebra of $G$ such that $_F G$ and
$G_F$ are finitely generated, then 
$c_{\mathcal P}(F)\geq c_{\mathcal P}(G)$.
\item[(2)]
If $H$ is a finite dimensional semisimple Hopf algebra 
acting on $F$ homogeneously, then $c_{\mathcal P}(F^H)
\geq c_{\mathcal P}(F) \geq c_{\mathcal P} (F\# H)$.
\item[(3)]
$c_{\mathcal P}(F)\geq c_{\mathcal P}(M_n(F))$.
\item[(4)]
Let $t$ be a commutative indeterminate of degree 1.
Then $c(F[t])=c(F)$.
\end{enumerate}
\end{proposition}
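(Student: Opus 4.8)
The plan is to reduce the entire proposition to a single observation: finite maps compose, so a finite map $f\colon F\to G$ induces an inclusion $\Phi(F)\subseteq \Phi(G)$. Indeed, if $T\in\Phi(F)$ via a finite map $\phi\colon T\to F$, write $_TF=\sum_j\phi(T)f_j$ and $_FG=\sum_i f(F)g_i$, both finitely generated. Then $_TG=\sum_{i,j}(f\phi)(T)\,f(f_j)g_i$ is finitely generated over $T$ through $f\phi$, and symmetrically $G_T$ is finitely generated; hence $f\phi\colon T\to G$ is a finite map and $T\in\Phi(G)$. Since the defining infimum for $c_{\mathcal P}$ is now taken over a larger set, $c_{\mathcal P}(G)=\inf_{\Phi(G)}{\mathcal P}\le \inf_{\Phi(F)}{\mathcal P}=c_{\mathcal P}(F)$, the inequality being vacuous when $\Phi(F)=\varnothing$ (there $c_{\mathcal P}(F)=\infty$). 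This proves the main inequality.

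Parts (1)--(3) then follow by exhibiting the appropriate finite map and quoting the main inequality. For (1) the inclusion $F\hookrightarrow G$ is finite by hypothesis. For (2), when $H$ is finite dimensional semisimple the inclusion $F^H\hookrightarrow F$ is finite (as $F$ is a finitely generated $F^H$-module on both sides) and the inclusion $F\hookrightarrow F\#H$ is finite (as $F\#H\cong F\otimes H$ is free of rank $\dim H$ over $F$ on each side), yielding the two displayed inequalities. For (3), the unital diagonal map $F\to M_n(F)$, $a\mapsto a\,\id$, makes $M_n(F)\cong F^{\,n^2}$ a finitely generated $F$-module on both sides, so it is finite.

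The equality in (4) is the only item requiring two separate arguments, precisely because $F\hookrightarrow F[t]$ is \emph{not} a finite map. For $c(F[t])\ge c(F)$ I would use that $F\cong F[t]/(t)$ is cyclic, hence finitely generated, as an $F[t]$-module on both sides, so the quotient map $F[t]\to F$ is finite; the main inequality then gives $c(F[t])\ge c(F)$. For $c(F[t])\le c(F)$ I would argue directly: given $T\in\Phi(F)$ with finite map $\phi\colon T\to F$, the algebra $T[t]=T\otimes\Bbbk[t]$ is again noetherian AS regular, and the base-changed map $\phi[t]\colon T[t]\to F[t]$ is finite (the generators of $_TF$ still generate $_{T[t]}F[t]$, and symmetrically), so $T[t]\in\Phi(F[t])$. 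Moreover, since $\Bbbk[t]$ is AS regular of type $(1,1)$ with $\CMreg(\Bbbk[t])=0$ by Example~\ref{zzex2.1}(3), Remark~\ref{zzrem2.9} gives $\CMreg(T[t])=\CMreg(T)+\CMreg(\Bbbk[t])=\CMreg(T)$. Taking the infimum of $-\CMreg$ over all such $T[t]$ yields $c(F[t])\le\inf_{T\in\Phi(F)}\{-\CMreg(T)\}=c(F)$, and combining the two inequalities gives the equality. Note that this is special to $\mathcal{P}=-\CMreg$: the vanishing $\CMreg(\Bbbk[t])=0$ is exactly what makes the invariant unchanged under $\otimes\Bbbk[t]$.

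The main obstacle I anticipate is bookkeeping rather than conceptual. The delicate points are: verifying that each exhibited map is genuinely finite (the composition in the main step, the base change $\phi[t]$ and the quotient $F[t]\to F$ in part (4)); and confirming the stability of noetherian AS regularity under the central extension from $T$ to $T[t]$ together with the accompanying $\CMreg$ computation. Each of these rests only on the K\"unneth-type formula of Remark~\ref{zzrem2.9} and on the standard module-finiteness facts for semisimple Hopf actions and for polynomial and matrix extensions.
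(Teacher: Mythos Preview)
Your proof is correct and follows essentially the same approach as the paper's: the main inequality comes from the inclusion $\Phi(F)\subseteq\Phi(G)$ induced by composing finite maps, parts (1)--(3) are immediate consequences, and part (4) uses the quotient map $F[t]\to F$ for one inequality and the base change $T\mapsto T[t]$ for the other. The only cosmetic difference is that the paper computes $\CMreg(T[t])$ directly from the AS type (if $T$ is of type $(d,\mathfrak{l})$ then $T[t]$ is of type $(d+1,\mathfrak{l}+1)$, so $\CMreg(T[t])=(d+1)-(\mathfrak{l}+1)=d-\mathfrak{l}=\CMreg(T)$) rather than invoking the K\"unneth-type formula of Remark~\ref{zzrem2.9}, and the paper phrases the infimum argument via an auxiliary $\epsilon>0$ rather than bounding $c(F[t])$ by each $-\CMreg(T)$ directly; both are equivalent.
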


\begin{proof}
By definition,
$c_{\mathcal P}(F)=\inf_{T \in \Phi(F)}\{{\mathcal P}(T)\}$.
If $T \in \Phi(F)$, then by definition there is
a finite map $\phi:T \to F$.
But since $f: F\to G$ is a finite map, the composition
$f\circ \phi: T\to G$ is a finite map. Hence 
$T \in \Phi(G)$,
so $\Phi(F)\subseteq \Phi(G)$, which implies that
$c_{\mathcal P}(F)\geq c_{\mathcal P}(G)$.

Parts (1)--(3) are immediate consequences of 
the main assertion.

For part (4), note that there is a finite map 
$F[t] \to F$ given by sending $t$ to $0$.
Hence, taking $\mathcal{P} = -\CMreg$ in the main assertion,
we have $c(F[t])\geq c(F)$.

Fix a real number $\epsilon >0$. By definition
of $c(F)$, there is a noetherian AS regular algebra 
$T$ of type $(d,\bfl)$ and a finite map $\phi: T\to F$
such that $-\CMreg(T)\leq c(F)+\epsilon$. Then 
$T[t]\to F[t]$ is a finite map. Hence,
$$\begin{aligned}
c(F[t])&\leq -\CMreg(T[t])=-((d+1)-(\bfl+\deg t))\\
&=-(d-\bfl) -(1-\deg t) = -(d-\bfl)\\
&=-\CMreg(T)\leq c(F)+\epsilon.
\end{aligned}
$$
Since $\epsilon$ was arbitrary, we obtain 
that $c(F[t])\leq c(F)$. Combined with
the previous paragraph, we conclude that
$c(F[t])= c(F)$.
\end{proof}

We do not have any examples with strict inequality 
$c_{\mathcal P}(F)> c_{\mathcal P} (F\# H)$ 
and $c_{\mathcal P}(F)>c_{\mathcal P}(M_n(F))$. 
Proposition~\ref{zzpro0.14} can be used to provide many
examples with $c_{\mathcal P}(F^H)> c_{\mathcal P}(F)$.
We now prove Proposition~\ref{zzpro0.14}. For the rest
of the paper we consider only connected graded noetherian
algebras.

\begin{proof}[Proof of Proposition~\ref{zzpro0.14}]
Let $H$ be a semisimple Hopf algebra acting on a noetherian
AS regular algebra $T$ homogeneously and let $R = T^H$ denote the 
invariant subring.

(1) By \cite[Theorem 0.8(1)]{KWZ}, if $S\in \Phi(R)$,
then 
$$\beta_1(R)\leq \max\{\beta_1(S), \CMreg(T)-\CMreg(S)\}.$$
It is clear that $\CMreg(T)-\CMreg(S)\leq -\CMreg(S)
<-\CMreg(S)+1$. By definition and Theorem~\ref{zzthm0.7},
$$\begin{aligned}
\beta_1(S)&=t^S_1(\Bbbk)\leq\Torreg(_S\Bbbk)+1\\
&=\CMreg(_S\Bbbk)-\CMreg(S)+1=-\CMreg(S)+1
\end{aligned}
$$
which implies that $\beta_1(R)\leq -\CMreg(S)+1$,
or equivalently, $-\CMreg(S) \geq \beta_1(R)-1$.
Since $S\in \Phi(R)$ was arbitrary, we obtain that
$c(R)\geq \beta_1(R)-1$.

(2) By \cite[Theorem 0.8(2)]{KWZ}, if $S\in \Phi(R)$,
then 
$$\beta_2(R)\leq \max\{2(\CMreg(T)-\CMreg(S)), 
\CMreg(T)-\CMreg(S)+\beta_1(S), \beta_2(S)\}.$$

If $\beta_2(R)\leq 2(\CMreg(T)-\CMreg(S))$, then
$-\CMreg(S)\geq \frac{1}{2} \beta_2(R) -\CMreg(T)$. 
Since $S\in \Phi(R)$ was arbitrary, we obtain that
\[c(R)\geq \frac{1}{2} \beta_2(R)-\CMreg(T). 
\]

If $\beta_2(R)\leq \CMreg(T)-\CMreg(S)+\beta_1(S)$, 
then $\beta_2(R)\leq \CMreg(T)-\CMreg(S)+(-\CMreg(S)+1)$,
which implies that 
\[c(R)\geq \frac{1}{2}(\beta_2(R)-\CMreg(T)-1).
\]

Finally, if $\beta_2(R)\leq \beta_2(S)$, then 
$$\beta_2(R)
\leq \beta_2(S)\leq \Torreg(\Bbbk)+2
\leq \CMreg(\Bbbk)-\CMreg(S)+2
=-\CMreg(S)+2,$$
which implies that $c(R)\geq \beta_2(R)-2$.

Combining these three cases, we obtain that
$$c(R)\geq \min\left\{\frac{1}{2} \beta_2(R)-\CMreg(T),
\frac{1}{2}(\beta_2(R)-\CMreg(T)-1),\beta_2(R)-2\right\},$$
as desired.
\end{proof}

The inequalities in Proposition~\ref{zzpro0.14} can be used 
to test if an algebra is isomorphic to the invariant subring of a 
semisimple Hopf action on an AS regular algebra.

\begin{example}
\label{zzex4.2}
Let $A$ be a connected graded algebra that is not 
generated in degree 1 (or equivalently $\beta_1(A)\geq 2$). 
Suppose there is a noetherian Koszul AS regular algebra $S\in 
\Phi(A)$. Then $c(A)=0$ by definition and so by
Proposition~\ref{zzpro0.14}(1), $A$ cannot be 
isomorphic to an invariant subring $T^H$.

A specific example is the graded algebra 
$$A=\Bbbk[x_1,\dots,x_n,t_1,\dots,t_m]
/(t_j^2 - f_j(x_1,\dots,x_n) \mid j=1,\dots,m)$$ 
where $\deg x_i=1$ for all $i=1,\dots,n$,
$\deg t_{j_0}>1$ for some $j_0$,
and each $f_j$ is a homogeneous polynomial in the
$x_i$'s of degree 
equal to $2 \deg t_j$. Then $c(A)=0$, as the polynomial
ring $\Bbbk[x_1,\cdots,x_n]$ is in $\Phi(A)$. 
It is clear that $\beta_1(A)=\max_{1 \leq j \leq m}\{\deg t_j\}>1$.
\end{example}

Our next aim is to prove Theorems~\ref{zzthm0.10}
and \ref{zzthm0.12}. We begin by proving the following
lemma. As usual, let $T$ denote a noetherian AS regular algebra.

\begin{lemma}
\label{zzlem4.3} 
Let $A$ be a connected graded noetherian algebra,
let $\phi: T\to A$ be a finite map, and let $M$ be a finitely
generated graded left $A$-module. For parts {\rm{(2)--(5)}},
we further assume that $_TA$ is linear.
\begin{enumerate}
\item[(1)]
$\Torreg(_TM)\leq \Torreg(_TA)+\Torreg(_AM)$.
\item[(2)]
For all $s \geq 0$, we have $t^T_s(\Bbbk)-s\leq 
\displaystyle\max_{0\leq i\leq s}\{t^A_i(\Bbbk)-i\}$.
\item[(3)]
For all $0\leq s\leq \gldim T$,
we have $\displaystyle\max_{0\leq i\leq s}\{t^A_i(\Bbbk)-i\}
=\max_{0\leq i\leq s}\{t^T_i(\Bbbk)-i\}$.
\item[(4)]
For all $j > \gldim T$, we have 
$$t^A_j(\Bbbk)-j\leq \Torreg(_T \Bbbk)
= \displaystyle\max_{0\leq i\leq \gldim T}\{t^T_i(\Bbbk)-i\}.$$
\item[(5)]
$\Torreg(_A\Bbbk)=\Torreg(_T\Bbbk)$. As a consequence, 
then $A$ is Koszul if and only if $T$ is Koszul.
\end{enumerate}
\end{lemma}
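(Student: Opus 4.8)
The plan is to handle part (1) directly and parts (2)--(5) through a single change-of-rings spectral sequence. For part (1), set $W:=\Bbbk\otimes^L_T A$, an object of $\D^{\bb}_{\fg}(A^{\op}\Gr)$ (it is bounded because $\Bbbk_T$ has a finite free resolution over the AS regular algebra $T$), whose cohomology is $H^{-q}(W)=\Tor^T_q(\Bbbk,A)$; reading off degrees gives $\deg(W)=\Torreg({}_TA)$. Using the associativity isomorphism $\Bbbk\otimes^L_T M\cong W\otimes^L_A M$ (with $\Bbbk=\Bbbk_T$, and $A\otimes^L_A M\cong M$), I would replace $M$ by a minimal free resolution $P$ over $A$, whose term $P^{-i}$ is generated in degrees $\le \Torreg({}_AM)+i$ by Example~\ref{zzex2.4}(1). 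A free summand $A(-a)$ of $P^{-i}$, which sits in cohomological degree $-i$ and has $a\le \Torreg({}_AM)+i$, contributes a copy of $W(-a)$ shifted down in cohomological degree by $i$; by Lemma~\ref{zzlem2.7} its degree is $\deg(W)+a-i\le \deg(W)+\Torreg({}_AM)$. Hence $\Torreg({}_TM)=\deg(W\otimes^L_A M)\le \deg(W)+\Torreg({}_AM)=\Torreg({}_TA)+\Torreg({}_AM)$, which is part (1).

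For parts (2)--(5) the tool is the Cartan--Eilenberg change-of-rings spectral sequence $E^2_{p,q}=\Tor^A_p(\Tor^T_q(\Bbbk,A),\Bbbk)\Rightarrow \Tor^T_{p+q}(\Bbbk,\Bbbk)$, a first-quadrant homological spectral sequence whose differentials $d_r\colon E^r_{p,q}\to E^r_{p-r,q+r-1}$ preserve internal degree. The hypothesis that ${}_TA$ is linear is the decisive simplification: it forces $\Tor^T_q(\Bbbk,A)$ to be concentrated in internal degree $q$, hence to be a \emph{trivial} right $A$-module isomorphic to $\Bbbk(-q)^{\oplus b_q}$ with $b_0=1$. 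Therefore $E^2_{p,q}\cong \Tor^A_p(\Bbbk,\Bbbk)(-q)^{\oplus b_q}$, whose top internal degree is $t^A_p(\Bbbk)+q$. Since $E_\infty$ is a subquotient of $E_2$ and internal degrees are preserved, $t^T_s(\Bbbk)\le \max_{p+q=s}\{t^A_p(\Bbbk)+q\}$, and subtracting $s$ gives part (2). Part (2) then yields $\max_{0\le i\le s}\{t^T_i(\Bbbk)-i\}\le \max_{0\le i\le s}\{t^A_i(\Bbbk)-i\}$, which is one half of part (3).

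The main obstacle is the reverse inequality in part (3). My plan here is an \emph{extremal class survives} argument. Choose $i_0\le s$ achieving $\max_{0\le i\le s}\{t^A_i(\Bbbk)-i\}$ and look at the top-degree class of $\Tor^A_{i_0}(\Bbbk,\Bbbk)=E^2_{i_0,0}$, sitting in internal degree $t^A_{i_0}(\Bbbk)$. Because $b_0=1$ identifies the bottom row with $\Tor^A_\bullet(\Bbbk,\Bbbk)$, and because no differential enters the bottom row (the source $(i_0+r,1-r)$ has negative $q$), this class can fail to survive only if some outgoing $d_r\colon E^r_{i_0,0}\to E^r_{i_0-r,r-1}$ is nonzero on it. The $E^2$-degree formula shows such a differential would require a nonzero class of internal degree $t^A_{i_0}(\Bbbk)$ in $E^2_{i_0-r,r-1}\cong \Tor^A_{i_0-r}(\Bbbk,\Bbbk)(-(r-1))^{\oplus b_{r-1}}$, forcing $t^A_{i_0-r}(\Bbbk)-(i_0-r)\ge (t^A_{i_0}(\Bbbk)-i_0)+1$ and contradicting the maximality of $i_0$ over $\{0,\dots,s\}$. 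Hence the class survives to $E_\infty^{i_0,0}$, so it contributes to $\Tor^T_{i_0}(\Bbbk,\Bbbk)$ in internal degree $t^A_{i_0}(\Bbbk)$; this gives $t^T_{i_0}(\Bbbk)-i_0\ge t^A_{i_0}(\Bbbk)-i_0$ (consistently with $i_0\le s\le \gldim T$), completing part (3).

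For part (4), fix $j>\gldim T$. Since $\Tor^T_j(\Bbbk,\Bbbk)=0$, the entire bottom-row term $E^2_{j,0}=\Tor^A_j(\Bbbk,\Bbbk)$ must eventually be killed, so its top class is hit by some $d_r$, yielding as above an index $j-r<j$ with $t^A_{j-r}(\Bbbk)-(j-r)\ge (t^A_j(\Bbbk)-j)+1$. Iterating this descent strictly decreases the homological index while strictly increasing the value of $t^A_\bullet(\Bbbk)-\bullet$, so after finitely many steps it reaches some index $i^*\le \gldim T$ with $t^A_{i^*}(\Bbbk)-i^*\ge t^A_j(\Bbbk)-j$; applying part (3) at $s=\gldim T$ bounds this by $\Torreg({}_T\Bbbk)=\max_{0\le i\le \gldim T}\{t^T_i(\Bbbk)-i\}$, giving part (4). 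Finally part (5) follows by combining: part (3) at $s=\gldim T$ gives $\max_{0\le i\le \gldim T}\{t^A_i(\Bbbk)-i\}=\Torreg({}_T\Bbbk)$, while part (4) shows the terms $t^A_j(\Bbbk)-j$ with $j>\gldim T$ do not exceed $\Torreg({}_T\Bbbk)$; hence $\Torreg({}_A\Bbbk)=\Torreg({}_T\Bbbk)$, and the Koszul equivalence is the special case where this common value is $0$. The points I would check most carefully are the existence and graded $E_2$-identification of the spectral sequence, the fact that its differentials strictly preserve internal degree, and the $b_0=1$ normalization supplied by linearity of ${}_TA$.
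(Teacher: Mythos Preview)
Your proof is correct and uses the same change-of-rings spectral sequence as the paper, but the organization in parts (3) and (4) differs in an instructive way. For part (3), the paper proceeds by induction on $s$ and invokes the external fact \cite[(3-4), p.1600]{StZ} that the sequence $\{t^T_i(\Bbbk)-i\}_{i=0}^{\gldim T}$ is non-decreasing, thereby reducing the desired inequality to the pointwise statement $t^A_s(\Bbbk)\le t^T_s(\Bbbk)$; a survival argument at $E^2_{s,0}$ then gives a contradiction if $t^A_s(\Bbbk)>t^T_s(\Bbbk)$. Your ``extremal class survives'' argument instead chooses $i_0\le s$ realizing the maximum of $t^A_i(\Bbbk)-i$ and observes directly that any outgoing differential from the top-degree class of $E^2_{i_0,0}$ would force a strictly larger value at a smaller index, contradicting maximality. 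This is more self-contained, as it does not require the monotonicity from \cite{StZ}; on the other hand, the paper's route yields the finer pointwise inequality $t^A_s(\Bbbk)\le t^T_s(\Bbbk)$ for every $s\le\gldim T$, which your argument does not directly give. For part (4), the paper runs an induction on $j>\gldim T$ while you phrase the same idea as an iterated descent; these are equivalent. Your treatment of part (1) via the associativity $\Bbbk\otimes^L_T M\cong(\Bbbk\otimes^L_T A)\otimes^L_A M$ and a minimal free resolution of $M$ is just the derived-category repackaging of the paper's spectral-sequence bound. One small wording point: in part (4) you say the top class ``is hit by some $d_r$''; since nothing maps \emph{into} the bottom row, you mean it lies outside the kernel of some outgoing $d_r$, which is how you then use it.
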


\begin{proof}
We will use the change of rings spectral sequence given in
\cite[Theorem 10.60]{Ro}, namely:
\begin{equation}
\label{E4.3.1}\tag{E4.3.1}
E^2_{p,q}:=\Tor^A_{p}\left(\Tor^T_q(\Bbbk_T,A),{_AM}\right)\Longrightarrow
\Tor^T_{p+q}(\Bbbk_T, {_TM}).
\end{equation}
The $E^2$-page of the spectral sequence is similar to
the one given after \cite[Lemma 5.1]{KWZ}. 

(1) We have
\begin{align*}
\Torreg({}_T M) &\overset{\text{Thm~\ref{zzcor2.6}}}{=} \CMreg({}_T M) + \Torreg( {}_T \Bbbk) \\
&\overset{\text{Ex~\ref{zzex2.4}(3)}}{=} \CMreg({}_T M) - \CMreg(T) \\
&\overset{\text{Cor~\ref{zzcor3.4}(c)}}{=} \CMreg({}_T M) + \Torreg( {}_T A)  -\CMreg( {}_T A)  \\
&= \CMreg({}_T M) + \Torreg( {}_T A)  -\CMreg( A)  \\
&= \Torreg( {}_T A) + \CMreg({}_A M)   -\CMreg( A)  \\
&\overset{\text{Thm~\ref{zzthm2.5}(2)}}{\leq} \Torreg( {}_T A) + \Torreg( {}_A M).
\end{align*}

(2) For the remainder of the proof, we assume 
that $_TA$ is linear, that is, we assume
for all $q$ that there exists some $n_q \geq 0$ 
such that $\Tor^T_q(\Bbbk_T,A)=\Bbbk(-q)^{n_q}$. 
We claim that the map $f: T\to A$ is surjective.
Since $_TA$ is linear, $\Bbbk\otimes_T A=\Bbbk^{\oplus n}$
for some positive integer $n$.
So $_TA$ is generated by elements of degree 0. But 
$A$ has only one element in degree 0 (up to a scalar).
So $_TA$ is generated by a single element of degree 0.
This implies that the map $\phi: T\to A$ is surjective.
Note that $n_q=0$ for all $q>\pdim
{_TA}$. By taking $M=\Bbbk$ in \eqref{E4.3.1}, we have
\begin{align}
\notag
t^T_n(\Bbbk)&=\deg \Tor^T_{n}(\Bbbk_T, {_T\Bbbk})\\
\label{E4.3.2}\tag{E4.3.2}
&\leq \max_{p+q=n}\{\deg E^2_{p,q}\}=
\max_{p+q=n}\{\deg \Tor^A_{p}(\Tor^T_q(\Bbbk_T,A),{_A\Bbbk})\}\\
\notag
&\leq \max_{p+q=n} \{\deg \Tor^A_{p}(\Bbbk_A,{_A\Bbbk})+q\}
=\max_{p+q=n}\{t^A_p(\Bbbk)+q\}.
\end{align}
Therefore the assertion in (2) follows.

(3) 
We use induction on $s$ from $0$ to $g:=\gldim T$.
Since $t^T_0(\Bbbk)=t^A_0(\Bbbk)=0$, the assertion holds
for $s=0$. Now assume that $s>0$ and assume the assertion
holds for $s-1$. Part (2) above implies that 
$\displaystyle\max_{0\leq i\leq s}\{t^A_i(\Bbbk)-i\}
\geq \max_{0\leq i\leq s}\{t^T_i(\Bbbk)-i\}$.
Hence we need only show that 
$\displaystyle\max_{0\leq i\leq s}\{t^A_i(\Bbbk)-i\}
\leq \max_{0\leq i\leq s}\{t^T_i(\Bbbk)-i\}$.
By the induction
hypothesis, it suffices to show that
\begin{equation}
\label{E4.3.3}\tag{E4.3.3}
t^A_s(\Bbbk)-s\leq 
\max_{0\leq i\leq s}\{t^T_i(\Bbbk)-i\}.
\end{equation}

By \cite[(3-4), p.1600]{StZ},
$\{t^T_0(\Bbbk), \cdots,t^T_g(\Bbbk)\}$ is 
strictly increasing, or equivalently,
\begin{equation}
\label{E4.3.4}\tag{E4.3.4}
{\text{$\{t^T_0(\Bbbk), t^T_1(\Bbbk)-1, \cdots, 
t^T_{g-1}(\Bbbk)-(g-1), t^T_g(\Bbbk)-g\}$ is 
non-decreasing.}}
\end{equation}
So \eqref{E4.3.3} is equivalent to 
\begin{equation}
\label{E4.3.5}\tag{E4.3.5}
t^A_s(\Bbbk)-s\leq t^T_s(\Bbbk)-s,
\quad
{\text{or}}\quad
t^A_s(\Bbbk)\leq t^T_s(\Bbbk). 
\end{equation}
Let $\alpha=t^T_s(\Bbbk)$ and $\beta=t^A_s(\Bbbk)$. 
Using the spectral sequence \eqref{E4.3.1} (and an inequality
in \eqref{E4.3.2}), we see that for each $p<s$, 
$$\begin{aligned}
\deg E^{\infty}_{p,s-p} &\leq \deg E^2_{p,s-p}
\leq \max_{p+q=s, p<s} \{t^A_p(\Bbbk)+q\} \\
&=\max_{0\leq p\leq s-1}\{t^A_p(\Bbbk)-p\} + s
=\max_{0\leq i\leq s-1}\{ t^T_i(\Bbbk)-i\}+ s\\
&\leq t^T_{s}(\Bbbk) -s+s=t^T_s(\Bbbk)=\alpha.
\end{aligned}
$$
Hence, if $\beta>\alpha$, then 
$(E^{\infty}_{p,s-p})_{\beta}=0$ for all
$p<s$. 
Similarly, for all $r \geq 2$ and for all $p + q < s$, 
we can show 
$\deg E^r_{p,q} \leq t^T_{p+q}(\Bbbk) \leq \alpha$.
Observe that if $r \geq 2$, the incoming differentials 
to $E^r_{s,0}$ are all zero, and the outgoing 
differentials map to $E^r_{s-r,r-1}$ with
$\deg E^r_{s-r,r-1}\leq \alpha$. 
Hence, $0\neq (E^2_{s,0})_{\beta}$
is in the kernel of all outgoing differentials and so
$(E^2_{s,0})_{\beta}$ survives on the $\infty$-page.
Thus, by \eqref{E4.3.1}, $\Tor^T_s(\Bbbk,\Bbbk)_{\beta}\neq 0$,
which contradicts that $\deg \Tor^T_s(\Bbbk,\Bbbk)=\alpha
<\beta$. Therefore $\beta\leq \alpha$, which is \eqref{E4.3.5}.
This completes the inductive step and the proof. 

(4) We prove this by induction on $j>\gldim T$. The 
initial step and the inductive step are similar, so we 
treat them together. Let $\gamma=\Torreg(_T\Bbbk)$. 
Similar to the proof of part (3), one sees that 
for every $r \geq 2$
$$\deg E^{r}_{j-r,r-1}\leq \max_{p\leq j-1}
\{ t^A_{p}(\Bbbk)-p+ (j-1)\} 
\leq \gamma +(j-1)$$
where the second inequality is the inductive step when
$j>\gldim T+1$, and is part (3) when $j=\gldim T+1$. 
Since $E^{\infty}_{j,0}=0$ as $j>\gldim T$, the ``outgoing
differential'' argument in the proof of part (3) shows 
that 
$$\deg E^2_{j,0}\leq \max_{r\geq 2}\{\deg E^{r}_{j-r,r-1}\}
\leq \gamma+ j-1.$$
This is equivalent to 
$$t^A_{j}(\Bbbk)\leq \gamma+j-1<\gamma+j,$$ and 
therefore the assertion holds.

(5) The equation follows from parts (3) and (4).
The consequence follows from the fact that $A$ is Koszul
if and only if $\Torreg(_A\Bbbk)=0$.
\end{proof}

Now we are ready to prove Theorems~\ref{zzthm0.10} and
\ref{zzthm0.12}.

\begin{proof}[Proof of Theorem~\ref{zzthm0.10}]
Fix a noetherian AS regular algebra $S$. We first remark that by \cite[Theorem 8.3(3)]{AZ}, if there is a finite map $T \to S$, then for any finitely generated graded $S$-module $M$, we have $\CMreg({}_T M) = \CMreg({}_S M)$. In particular, $\CMreg({}_T S) = \CMreg({}_S S)$.

(1) Suppose $T$ is any noetherian AS regular algebra
and suppose $T\to S$ is a finite map. 
By Theorem~\ref{zzthm0.7} and the fact that
$\Torreg({}_{T} S)\geq 0$, 
we obtain that
\[\CMreg(T) = \CMreg({}_T S) - \Torreg({}_T S) \leq \CMreg(S)\]
and hence, $- \CMreg(T) \geq -\CMreg(S)$. Therefore, 
$c(S) \geq -\CMreg(S)$. By definition, it is clear that 
$c(S) \leq - \CMreg(S)$, and so we have equality, as desired.

(2) We use part (1) in the next proof.
If $S$ is Koszul, then $c(S)=-\CMreg(S)=
\Torreg(_S\Bbbk)=0$. Conversely, if $c(S)=0$, then
$\Torreg(_S\Bbbk)=-\CMreg(S)=c(S)=0$, which 
implies that $S$ is Koszul.

(3) Suppose $T$ is a noetherian AS regular algebra and 
$f:T\to S$ is a finite map. By Theorem~\ref{zzthm0.7}, we 
have 
$$\CMreg(S)(=\CMreg(_SS))=\CMreg(_TS)=\Torreg(_TS)+\CMreg(T).$$
This implies that $-\CMreg(T)\geq -\CMreg(S)$,
as $\Torreg(_TS)\geq 0$ by definition. The first assertion
now follows from part (1).

If $c(T) = c(S)$, then by part (1), we have
$\CMreg(T)=\CMreg(S)=\CMreg(_TS)$. 
Hence, by Theorem~\ref{zzthm0.7},
$\Torreg(_TS)=0$, which means that $_TS$ is linear.
Similarly, $S_T$ is linear. By the proof of 
Lemma~\ref{zzlem4.3}(2), the map $f: T\to S$ 
is surjective.

Since the local cohomology of $S$ can be computed as a 
left $S$-module or as a left $T$-module, 
$H^i_{\fm}(_TS)$ is zero unless $i=\gldim S$. Hence  
$_TS$ is Cohen--Macaulay.
\end{proof}

\begin{proof}[Proof of Theorem~\ref{zzthm0.12}]
(1) We have
\begin{align*}
c_{-}(A) &= c(A) + \CMreg(A) = \inf_{T \in \Phi(A)} \{ - \CMreg(T) + \CMreg({}_T A)\} \\
&= \inf_{T \in \phi(A)} \{ \Torreg({}_T A) \}  \geq 0.
\end{align*}

(2) If $A$ is AS regular, by letting $T=A$, we obtain that
$c(A)\leq -\CMreg(A)$. As a consequence, 
$c_{-}(A)\leq 0$. By part (1), $c_{-}(A)=0$.

Conversely, suppose $c_{-}(A)=0$, or
equivalently, $c(A)=-\CMreg(A)$. By definition and 
the fact that $c(A)$ is an integer, there is a 
noetherian AS regular algebra $T$ with 
a finite map $\phi: T\to A$ such that 
$-\CMreg(T)=c(A)=-\CMreg(A)$. By Theorem~\ref{zzthm0.7},
$$\Torreg(_TA)=\CMreg(A)-\CMreg(T)=0.$$
Then $_TA$ is a linear left $T$-module
(i.e., $t^T_i(_TA)=i$ if $\Tor^T_i(\Bbbk, {}_TA)\neq 0$). 

By Lemma~\ref{zzlem4.3}(5), $\Torreg(_A\Bbbk)=\Torreg(_T\Bbbk)$.
We know that
$\CMreg(A)=\CMreg(T)$ and since $T$ is AS regular, we have 
$\CMreg(T)=-\Torreg(_T\Bbbk)$. Combining these equations, we obtain
that 
$$\ASreg(A)=\CMreg(A)+\Torreg(_A\Bbbk)
=\CMreg(T)+\Torreg(_T\Bbbk)=\ASreg(T)=0.$$ 
Finally the assertion follows from Theorem~\ref{zzthm0.8}. 
\end{proof}
We collect some criteria for Koszulness of AS regular algebras
that follow from our previous results.
%
%
\begin{corollary}
\label{zzcor4.4}
Let $T$ and $S$ be noetherian AS regular algebras and
$f:T\to S$ be a finite map.
\begin{enumerate}
\item[(1)]
If $T$ is Koszul {\rm{(}}or equivalently, $\Torreg(_T\Bbbk)=0${\rm{)}},
then so is $S$. Further, $f$ is surjective and $_TS$ and $S_T$ are linear Cohen--Macaulay modules over $T$.
\item[(2)]
Suppose $\Torreg(_T\Bbbk)\leq \deg (\Bbbk\otimes_T S)$.
Then $S$ is Koszul.
\item[(3)]
Suppose $\Torreg(_T\Bbbk)=1$. If $f$ is not surjective,
then $S$ is Koszul. As a consequence if $T$ is a proper
subalgebra of $S$, then $S$ is Koszul.
\end{enumerate}
\end{corollary}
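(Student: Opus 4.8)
My plan is to derive all three parts from a single identity. Since $f$ is a finite map, the induced module ${}_TS$ is finitely generated; since $T$ is AS regular it has finite global dimension, so ${}_TS$ has finite projective dimension over $T$. Thus Theorem~\ref{zzthm0.7} applies to $X={}_TS$ and gives $\CMreg({}_TS)=\Torreg({}_TS)+\CMreg(T)$. Finiteness of $f$ also makes the $\fm$-torsion of $S$ the same whether computed over $T$ or over $S$, so $\CMreg({}_TS)=\CMreg({}_SS)=\CMreg(S)$; and since $T$ is AS regular, $\CMreg(T)=-\Torreg({}_T\Bbbk)$ by \eqref{E2.4.2}. Combining these (exactly as in the proof of Theorem~\ref{zzthm0.10}(3)) yields the identity
\begin{equation*}
\CMreg(S)=\Torreg({}_TS)-\Torreg({}_T\Bbbk).
\end{equation*}
Throughout I will use that $S$, being AS regular, has $\CMreg(S)\leq 0$ by \eqref{E2.1.2}, that $S$ is Koszul exactly when $\CMreg(S)=0$ (as $\Torreg({}_S\Bbbk)=-\CMreg(S)$ by \eqref{E2.4.2}), and the elementary bound $\Torreg({}_TS)\geq \deg(\Bbbk\otimes_T S)\geq 0$ obtained by taking $i=0$ in the definition of $\Torreg$ together with $(\Bbbk\otimes_T S)_0=\Bbbk$.

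For part (1) I would simply quote Theorem~\ref{zzthm0.10}. If $T$ is Koszul then $c(T)=-\CMreg(T)=\Torreg({}_T\Bbbk)=0$ by Theorem~\ref{zzthm0.10}(1), so the chain $c(T)\geq c(S)\geq 0$ in Theorem~\ref{zzthm0.10}(3) forces $c(S)=0$, whence $S$ is Koszul by Theorem~\ref{zzthm0.10}(2). Since $c(T)=c(S)$, the second assertion of Theorem~\ref{zzthm0.10}(3) then gives that $f$ is surjective and that ${}_TS$ and $S_T$ are linear Cohen--Macaulay $T$-modules.

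For part (2) I would feed the hypothesis into the displayed identity: combining $\Torreg({}_T\Bbbk)\leq\deg(\Bbbk\otimes_T S)$ with the elementary bound gives $\Torreg({}_TS)\geq\deg(\Bbbk\otimes_T S)\geq\Torreg({}_T\Bbbk)$, so $\CMreg(S)\geq 0$; since also $\CMreg(S)\leq 0$, we get $\CMreg(S)=0$ and $S$ is Koszul. For part (3), when $\Torreg({}_T\Bbbk)=1$ the identity together with $\CMreg(S)\leq 0$ forces $\Torreg({}_TS)\leq 1$, and with $\Torreg({}_TS)\geq 0$ this leaves only the values $0$ and $1$. The decisive point is that $\Torreg({}_TS)=0$ forces $\Bbbk\otimes_T S$ to be concentrated in degree $0$, hence equal to $\Bbbk$, so that ${}_TS$ is cyclic and $f$ is surjective --- this is precisely the argument used in the proof of Lemma~\ref{zzlem4.3}(2). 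Hence if $f$ is not surjective then $\Torreg({}_TS)=1$, and the identity gives $\CMreg(S)=0$, so $S$ is Koszul. The final consequence is immediate, since a proper subalgebra inclusion $T\hookrightarrow S$ with ${}_TS$ and $S_T$ finitely generated is a finite map that is not surjective.

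Once the identity is established, the remaining steps are routine bookkeeping, so I do not expect a genuine obstacle. The only point demanding care is the invariance $\CMreg({}_TS)=\CMreg(S)$ of Castelnuovo--Mumford regularity under the finite map; this rests on the coincidence of the two $\fm$-torsion functors on $S$-modules and is already invoked in the proof of Theorem~\ref{zzthm0.10}(3), so it may be cited rather than reproven.
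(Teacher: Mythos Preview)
Your proposal is correct and follows essentially the same route as the paper: both arguments pivot on the identity $\CMreg(S)=\Torreg({}_TS)+\CMreg(T)=\Torreg({}_TS)-\Torreg({}_T\Bbbk)$ (Theorem~\ref{zzthm0.7} plus \eqref{E2.4.2}), and part~(1) is handled in both cases by quoting Theorem~\ref{zzthm0.10}. The only cosmetic difference is in part~(3): the paper simply observes that if $f$ is not surjective then $\deg(\Bbbk\otimes_T S)\geq 1=\Torreg({}_T\Bbbk)$ and invokes part~(2), whereas you argue the contrapositive (if $\Torreg({}_TS)=0$ then $f$ is surjective) and then use the identity directly; the two are logically equivalent and equally short.
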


\begin{proof}
(1) Suppose $T$ is a Koszul noetherian AS regular algebra
and suppose $f:T \to S$ is a finite map.
By Theorem~\ref{zzthm0.10}(3) and the fact that  $T$ is Koszul, we have
\[0=-\CMreg(T)\geq -\CMreg(S)\geq 0.\]
Hence $\CMreg(S)=\CMreg(T)=0$. The result follows
from parts (2) and (3) of Theorem~\ref{zzthm0.10} (the Koszulness of $S$ also
follows from Lemma~\ref{zzlem4.3}(5)).

(2) Suppose $S$ and $T$ are AS regular.
By Theorem~\ref{zzthm0.7}, we have
\[ \CMreg(S) = \CMreg({}_T S) = \Torreg({}_T S) + \CMreg(T).
\]
By Theorem~\ref{zzthm0.8}, $\CMreg(S) = - \Torreg({}_S \Bbbk)$
and $\CMreg(T) = - \Torreg({}_T \Bbbk)$, so
\begin{align*} 0 &\leq \Torreg({}_S \Bbbk) = - \CMreg(S)  =  -\CMreg(T) - \Torreg({}_T S) \\
&=  \Torreg({}_T \Bbbk) - \Torreg({}_T S)  \leq \Torreg({}_T \Bbbk) - \deg(\Bbbk \otimes_T S).
\end{align*}
Hence, if $\Torreg({}_T \Bbbk) \leq \deg(\Bbbk \otimes_T S)$, then $\Torreg({}_S \Bbbk) = 0$, whence $S$ is Koszul.

(3) If $f$ is not surjective, then $\deg(\Bbbk \otimes_T S) \geq 1$ and so by part (2), $S$ is Koszul.
\end{proof}

\begin{theorem}
\label{zzthm4.5} 
Let $A$ be a noetherian connected graded algebra. 
Suppose that there is a finite map $f : T \to A$,
where $T$ is a noetherian Koszul AS regular algebra. 
Then the following hold.
\begin{enumerate}
\item[(1)]
$\CMreg(A)\geq 0$ and $\CMreg(A)=0$ if and only if
$A$ is AS regular {\rm{(}}and Koszul{\rm{)}}.
\item[(2)]
If $A$ is commutative and generated in degree 1, then
$\CMreg(A)\geq 0$, and $A$ is a polynomial ring if and only if 
$\CMreg(A)=0$.
\end{enumerate}
\end{theorem}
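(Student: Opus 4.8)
The plan is to reduce both parts to the single identity $\CMreg(A)=\Torreg({}_TA)$ and then read off the conclusions. First I would record that, since $T$ is Koszul AS regular, \eqref{E0.6.1} gives $\CMreg(T)=-\Torreg({}_T\Bbbk)=0$. Because $f$ is finite, ${}_TA$ is a finitely generated graded $T$-module, and since $\gldim T<\infty$ it has finite projective dimension over $T$, so ${}_TA$ is a nonzero object of $\D^{\bb}_{\fg}(T\Gr)$ of finite projective dimension. As in the proof of Theorem~\ref{zzthm0.10}(3), the local cohomology of $A$ may be computed over $A$ or over $T$, so $\CMreg({}_AA)=\CMreg({}_TA)$; applying Theorem~\ref{zzthm0.7} over $T$ then yields
$$\CMreg(A)=\CMreg({}_TA)=\Torreg({}_TA)+\CMreg(T)=\Torreg({}_TA).$$
Nonnegativity is now immediate: the degree-zero part of $\Bbbk\otimes_T A=\Tor^T_0(\Bbbk,A)$ is $A_0/(\fm_T A)_0=\Bbbk\neq 0$, so $\Torreg({}_TA)\geq 0$, whence $\CMreg(A)\geq 0$.

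For the equality case of (1), I would observe that $\CMreg(A)=0$ is equivalent to $\Torreg({}_TA)=0$, i.e.\ to ${}_TA$ being linear. Assuming this, Lemma~\ref{zzlem4.3}(5) gives $\Torreg({}_A\Bbbk)=\Torreg({}_T\Bbbk)=0$, so $A$ is Koszul and $\ASreg(A)=\CMreg(A)+\Torreg({}_A\Bbbk)=0$. What makes Theorem~\ref{zzthm0.8} applicable is that $A$, being module-finite over $T$ which carries a balanced dualizing complex, itself has a balanced dualizing complex (the $\chi$ condition and finiteness of $\cd(\Gamma_\fm)$ pass across the finite extension $T\to A$); Theorem~\ref{zzthm0.8} then forces $A$ to be AS regular. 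Conversely, if $A$ is AS regular then $\CMreg(A)\leq 0$ by \eqref{E2.1.2}, and combined with $\CMreg(A)\geq 0$ this gives $\CMreg(A)=0$, and $d=\bfl$ so $A$ is Koszul.

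For part (2), when $A$ is commutative and generated in degree $1$ I would supply the required map by choosing degree-$1$ algebra generators $a_1,\dots,a_n$ of $A$ and taking the surjection $T=\Bbbk[x_1,\dots,x_n]\twoheadrightarrow A$ sending $x_i\mapsto a_i$. This is a finite map from a Koszul AS regular algebra, since $A$ is then cyclic as a $T$-module, so part (1) applies and gives $\CMreg(A)\geq 0$ with $\CMreg(A)=0$ iff $A$ is AS regular. Finally I would invoke the commutative dictionary recalled in the introduction: a commutative connected graded algebra is AS regular exactly when it is a polynomial ring with positive-degree generators, and under the degree-$1$ hypothesis this is a polynomial ring generated in degree $1$ (which is Koszul). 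Hence $\CMreg(A)=0$ iff $A$ is a polynomial ring.

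The step I expect to be the main obstacle is the justification that $A$ inherits a balanced dualizing complex from $T$, since this is precisely what licenses the use of Theorem~\ref{zzthm0.8} in the forward direction of (1). The remaining ingredients---invariance of local cohomology under the finite extension (already used for Theorem~\ref{zzthm0.10}(3)) and the bookkeeping with Theorem~\ref{zzthm0.7} and Lemma~\ref{zzlem4.3}---are routine.
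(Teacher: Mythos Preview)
Your proof is correct and follows essentially the same route as the paper. The only cosmetic difference is that the paper packages the equivalence in part~(1) through the concavity formalism---observing $c(A)=0$ (since a Koszul $T$ lies in $\Phi(A)$), hence $c_{-}(A)=\CMreg(A)$, and then invoking Theorem~\ref{zzthm0.12}(2)---whereas you unwind that theorem's proof directly by using Lemma~\ref{zzlem4.3}(5) and Theorem~\ref{zzthm0.8}; these are the very ingredients used to prove Theorem~\ref{zzthm0.12}(2), so the arguments coincide. Your identification of the balanced dualizing complex on $A$ as the one nontrivial hypothesis to justify is apt: the paper leaves it implicit (it is needed for Theorem~\ref{zzthm0.12} as well), and your explanation that $\chi$ and finite local-cohomological dimension pass across the finite map $T\to A$ is the standard way to supply it.
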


Part (2) is an improvement of \cite[Theorem 4.1(iv)$\Leftrightarrow$(v)]{Rom}. 

\begin{proof}[Proof of Theorem~\ref{zzthm4.5}]
(1) Since the map $\phi: T\to A$ is finite and $T$ is 
Koszul, by Theorem~\ref{zzthm0.7}, 
$$\CMreg(A)=\CMreg(_TA)=\Torreg(_TA)+\CMreg(T)=
\Torreg(_TA)\geq 0.$$
Since $T\in \Phi(A)$ and $T$ is Koszul, by definition,
$$0\leq c(A)\leq -\CMreg(T)=0,$$
which implies that $c(A)=0$. Consequently,
by the definition of $c_{-}(A)$, we have $c_{-}(A)=\CMreg(A)$. 
Now by Theorem
\ref{zzthm0.12}(2), we have that $A$ is AS regular if and only if
$\CMreg(A)=0$. If this happens, $\CMreg(A)=0$
implies that $\Torreg(_A\Bbbk)=0$ by 
Theorem~\ref{zzthm0.8}, and hence $A$ is Koszul.

(2) Let $T=\Bbbk[A_1]$. Then $A=T/J$ for some 
ideal $J$. Since $T$ is noetherian, Koszul and 
AS regular, then the assertion follows from part (1).
\end{proof}

Note that part (2) of the above theorem fails when $A$
is not commutative [Example~\ref{zzex5.4}(ii)].
Now we are ready to prove Theorem~\ref{zzthm0.3}.
Recall that the Hilbert series of a graded $A$-module 
$M$, denoted by $h_M(t)$, is given in Definition~\ref{zzdef1.1}. 
If $h_M(t)$ is a rational function, then the 
\emph{$a$-invariant of $M$}, denoted by $a(M)$
is defined to be the $t$-degree of the rational function $h_M(t)$.

\begin{theorem}
\label{zzthm4.6}
Let $A$ be a noetherian connected graded $s$-Cohen--Macaulay
algebra. Suppose that there is a finite map $f : T \to A$,
where $T$ is a noetherian Koszul AS regular algebra. Then the 
following are equivalent.
\begin{enumerate}
\item[(a)]
$A$ is AS regular.
\item[(b)]
$A$ is AS regular and Koszul.
\item[(c)]
$\deg h_A(t)=-s$.
\end{enumerate}
\end{theorem}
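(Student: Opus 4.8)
The plan is to route everything through the Castelnuovo--Mumford regularity $\CMreg(A)$, using Theorem~\ref{zzthm4.5}(1) as the hinge. That result applies here because of the finite map $f\colon T\to A$ from the Koszul AS regular algebra $T$, and it asserts that $\CMreg(A)\geq 0$ with $\CMreg(A)=0$ if and only if $A$ is AS regular (and automatically Koszul). This immediately settles the equivalence of (a) and (b): the implication (b)$\Rightarrow$(a) is trivial, while (a)$\Rightarrow$(b) is precisely the ``(and Koszul)'' clause of Theorem~\ref{zzthm4.5}(1), since $A$ being AS regular forces $\CMreg(A)=0$ and hence Koszulness. Thus it remains to prove (a)$\Leftrightarrow$(c), and by Theorem~\ref{zzthm4.5}(1) this reduces to showing that $\CMreg(A)=0$ if and only if $\deg h_A(t)=-s$.

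To establish this I would prove the single numerical identity
$$\CMreg(A)=s+\deg h_A(t).$$
The first ingredient is the hypothesis that $A$ is $s$-Cohen--Macaulay: by Definition~\ref{zzdef1.2} the only nonvanishing local cohomology is $H^s_{\fm}(A)$, so Example~\ref{zzex2.1}(4) gives $\CMreg(A)=s+\deg H^s_{\fm}(A)$. The second, and main, ingredient is the $a$-invariant identity $\deg H^s_{\fm}(A)=\deg h_A(t)$. I would obtain this from the Euler-characteristic relation
$$h_A(t)=\sum_{i}(-1)^i\, h_{H^i_{\fm}(A)}(t)=(-1)^s\, h_{H^s_{\fm}(A)}(t),$$
read as an identity of rational functions; here $h_A(t)$ is rational because ${}_TA$ is a finitely generated graded module over the noetherian AS regular algebra $T$. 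Since $H^s_{\fm}(A)$ is $\fm$-torsion, its Hilbert series is a Laurent series bounded above with top internal degree $\deg H^s_{\fm}(A)$, so its $t$-degree as a rational function is exactly $\deg H^s_{\fm}(A)$; the displayed relation then transports this value to $\deg h_A(t)$. Combining the two ingredients yields $\CMreg(A)=s+\deg h_A(t)$, whence $\CMreg(A)=0$ if and only if $\deg h_A(t)=-s$.

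Finally I would assemble the pieces. By the identity just proved together with Theorem~\ref{zzthm4.5}(1),
$$A \text{ is AS regular} \iff \CMreg(A)=0 \iff \deg h_A(t)=-s,$$
which gives (a)$\Leftrightarrow$(c) and, combined with the first paragraph, completes the chain (a)$\Leftrightarrow$(b)$\Leftrightarrow$(c).

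The step I expect to be the crux is the $a$-invariant identity $\deg H^s_{\fm}(A)=\deg h_A(t)$. In the commutative graded Cohen--Macaulay case this is the classical equality of the $a$-invariant with $\deg h_A(t)$, but in the present noncommutative setting I would justify the rational-function Euler relation $h_A(t)=\sum_i(-1)^i h_{H^i_{\fm}(A)}(t)$ carefully, using the balanced dualizing complex of $A$ (which exists because $A$ is finite over $T$) and Local Duality (Theorem~\ref{zzthm1.4}), together with the fact that each $H^i_{\fm}(A)$ is locally finite and bounded above in internal degree. The point is that the alternating sum of these (upward-truncated) Hilbert series, expanded around $t=\infty$, represents the same rational function as $h_A(t)$ expanded around $t=0$. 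Everything else in the argument is an immediate consequence of results already in hand.
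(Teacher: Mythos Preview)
Your proposal is correct and follows essentially the same route as the paper: both arguments reduce everything to the identity $\CMreg(A)=s+\deg h_A(t)$ and then invoke Theorem~\ref{zzthm4.5}(1). The only difference is that the paper obtains this identity by citing the proof of \cite[Theorem 4.7(2)]{KWZ}, whereas you unpack it directly via the Euler-characteristic relation $h_A(t)=(-1)^s h_{H^s_{\fm}(A)}(t)$ together with Example~\ref{zzex2.1}(4); your justification of that relation (rationality from finiteness over $T$, balanced dualizing complex, expansion at $t=\infty$) is the standard one and is fine.
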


\begin{proof}
Since there is a finite map to $A$ from an AS regular algebra $T$, by taking the minimal free resolution of the graded $T$-module $A$, 
it follows that the Hilbert series $h_A(t)$ is rational.
Then by the proof of \cite[Theorem 4.7(2)]{KWZ}, we have
\[
\CMreg(A) =s+a(A) = s+\deg h_A(t).
\]
Now the assertions follows from Theorem~\ref{zzthm4.5}(1).
\end{proof}

The following simple corollary is useful in practice.

\begin{corollary}
\label{zzcor4.7}
Let $A$ and $B$ be algebras satisfying Hypothesis~\ref{zzhyp1.3}.
Assume that there is a finite map $f : A \to B$ 
such that ${}_AB$ is linear of finite projective dimension. Then 
$A$ is AS regular if and only if  $B$ is AS regular.
\end{corollary}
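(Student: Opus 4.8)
The plan is to reduce both implications to the numerical criterion of Theorem~\ref{zzthm0.8}: an algebra satisfying Hypothesis~\ref{zzhyp1.3} is AS regular exactly when $\ASreg(-)=\Torreg(\Bbbk)+\CMreg(-)$ vanishes. So I would compare the two summands $\CMreg$ and $\Torreg(\Bbbk)$ across the map $f$. The first step is to show that $\CMreg(A)=\CMreg(B)$. Since ${}_AB$ is linear, $\Torreg({}_AB)=0$, and since ${}_AB$ is a nonzero object of $\D^{\bb}_{\fg}(A\Gr)$ of finite projective dimension, Theorem~\ref{zzthm0.7} gives
$$\CMreg({}_AB)=\Torreg({}_AB)+\CMreg(A)=\CMreg(A).$$
On the other hand, because $f$ is finite the $\fm$-torsion functor on $B$-modules is the same whether computed over $A$ or over $B$, so $H^i_{\fm}({}_AB)\cong H^i_{\fm}(B)$ compatibly with both the homological and the internal grading; hence $\CMreg({}_AB)=\CMreg(B)$ and therefore $\CMreg(A)=\CMreg(B)$.

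It remains to compare $\Torreg({}_A\Bbbk)$ and $\Torreg({}_B\Bbbk)$. Feeding the module $M=\Bbbk$ and the map $f$ into the change of rings spectral sequence \eqref{E4.3.1} produces
$$E^2_{p,q}=\Tor^B_p\!\left(\Tor^A_q(\Bbbk,B),\Bbbk\right)\Longrightarrow \Tor^A_{p+q}(\Bbbk,\Bbbk).$$
Linearity of ${}_AB$ forces $\Tor^A_q(\Bbbk,B)$ to be concentrated in internal degree $q$, so $\deg E^2_{p,q}=t^B_p(\Bbbk)+q$; repeating the estimate in the proof of Lemma~\ref{zzlem4.3}(2) gives $t^A_n(\Bbbk)\leq \max_{p+q=n}\{t^B_p(\Bbbk)+q\}$, and hence $\Torreg({}_A\Bbbk)\leq \Torreg({}_B\Bbbk)$. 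If $B$ is AS regular, then $\Torreg({}_B\Bbbk)=-\CMreg(B)=-\CMreg(A)$ by \eqref{E2.4.2}, so $\ASreg(A)=\Torreg({}_A\Bbbk)+\CMreg(A)\leq 0$; combined with $\ASreg(A)\geq 0$ from Corollary~\ref{zzcor2.6}(2) this forces $\ASreg(A)=0$, and $A$ is AS regular by Theorem~\ref{zzthm0.8}.

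For the converse, suppose $A$ is AS regular. Then $A$ may be taken as the AS regular source in Lemma~\ref{zzlem4.3} (its finite global dimension makes the finite-projective-dimension condition automatic, and ${}_AB$ is linear by hypothesis), so part (5) of that lemma yields $\Torreg({}_B\Bbbk)=\Torreg({}_A\Bbbk)$. Since $A$ is AS regular, $\Torreg({}_A\Bbbk)=-\CMreg(A)=-\CMreg(B)$ by \eqref{E2.4.2}, whence $\ASreg(B)=\Torreg({}_B\Bbbk)+\CMreg(B)=0$ and $B$ is AS regular, again by Theorem~\ref{zzthm0.8}.

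The main obstacle is the inequality $\Torreg({}_A\Bbbk)\leq \Torreg({}_B\Bbbk)$ needed for the implication ``$B$ AS regular $\Rightarrow$ $A$ AS regular'': here the source $A$ is \emph{not} assumed AS regular, so Lemma~\ref{zzlem4.3} cannot be invoked verbatim, since its standing convention is that the source is AS regular. The finite projective dimension of ${}_AB$ is exactly the substitute for finite global dimension of the source: it guarantees $\Tor^A_q(\Bbbk,B)=0$ for $q\gg 0$, so the spectral sequence has only finitely many nonzero rows and the $E^2$-degree bound propagates to $t^A_n(\Bbbk)$. I would verify that convergence and the vanishing of the relevant differentials cause no trouble, exactly as in the proof of Lemma~\ref{zzlem4.3}, once linearity has pinned down the $E^2$-page.
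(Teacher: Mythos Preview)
Your argument is correct. The forward direction ($A$ AS regular $\Rightarrow$ $B$ AS regular) matches the paper's proof exactly: both use Lemma~\ref{zzlem4.3}(5) for $\Torreg$, Theorem~\ref{zzthm0.7} for $\CMreg$, and then Theorem~\ref{zzthm0.8}.

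For the reverse direction the paper takes a different, shorter route. Rather than comparing $\CMreg$ and $\Torreg$ across $f$, it reads off from the same change-of-rings spectral sequence only a projective-dimension bound,
\[
\gldim A=\pdim {}_A\Bbbk\le \pdim {}_B\Bbbk+\pdim {}_AB<\infty,
\]
and then invokes the external result \cite[Theorem~0.3]{Zh} to conclude that a noetherian connected graded algebra with balanced dualizing complex and finite global dimension is AS regular. Note that this direction of the paper's proof uses \emph{only} the finiteness of $\pdim {}_AB$, not linearity. Your route instead exploits linearity to get the sharper degree estimate $\Torreg({}_A\Bbbk)\le \Torreg({}_B\Bbbk)$ (this is the computation of Lemma~\ref{zzlem4.3}(2), which you correctly observe does not require the source to be AS regular), combines it with $\CMreg(A)=\CMreg(B)$ from Theorem~\ref{zzthm0.7}, and closes with Theorem~\ref{zzthm0.8}. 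The payoff is that your proof is entirely internal to the paper's numerical framework and avoids the appeal to \cite{Zh}; the paper's version is slightly more economical and shows that linearity is unnecessary for this implication. Your worry about convergence is harmless: the spectral sequence is first-quadrant, so the $E^2$-degree bound passes to the abutment regardless.
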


\begin{proof} Suppose $B$ is AS regular. By the change of rings 
spectral sequence given in \cite[Theorem 10.60]{Ro} (or see 
\eqref{E4.3.1}), we have 
$$\gldim A=\pdim {_A\Bbbk}\leq \pdim {_B\Bbbk}+\pdim
{_AB}<\infty.$$
By \cite[Theorem 0.3]{Zh}, $A$ is AS regular. 

Now we assume $A$ is AS regular.
By Lemma~\ref{zzlem4.3}(5), 
$\Torreg(_A\Bbbk)=\Torreg(_B\Bbbk)$. By 
Theorem~\ref{zzthm0.7}, 
$$\CMreg(B)=\CMreg(_AB)=\Torreg(_AB)+\CMreg(A)
=\CMreg(A).$$
Then 
$$\ASreg(B)=\Torreg(_B\Bbbk)+\CMreg(B)
=\Torreg(_A\Bbbk)+\CMreg(A)=\ASreg(A)=0.$$
The assertion follows from Theorem~\ref{zzthm0.8}.
\end{proof}

\section{Examples and Remarks}
\label{zzsec5}

This section contains some examples and remarks. To save space we will omit some non-essential 
details. 
Our first example shows that it is necessary to assume 
Hypothesis~\ref{zzhyp1.3} for most of the results in this paper.

\begin{example}
\label{zzex5.1}
We refer to \cite{AZ, SZ} for the definition of the
{\it $\chi$-condition}. Let $A$ be the noetherian connected 
graded domain of GK-dimension 2 given in \cite[Theorem 2.3]{SZ} 
which does not satisfy the $\chi$-condition. By 
\cite[Theorem 6.3]{VdB}, $A$ does not admit a balanced 
dualizing complex. Further, by \cite[Theorem 2.3]{SZ},
$\Ext^1_{A}(\Bbbk,A)$ is not bounded above. Since
$\Hom_A(\Bbbk,A)=0$, $H^1_{\fm}(A)$ contains 
$\Ext^1_{A}(\Bbbk,A)$ as a graded $\Bbbk$-subspace. 
Therefore $\deg H^1_{\fm}(A)=+\infty$, and 
consequently, $\CMreg(A)=+\infty$. 

If there is a finite map $T\to A$ for some noetherian AS 
regular algebra $T$, then \cite[Theorem 8.1(1) and 
Lemma 8.2(4)]{AZ} implies that $A$ satisfies the 
$\chi$-condition. Since $A$ does not satisfy the 
$\chi$-condition, 
we conclude that there does not exist a finite map from a 
noetherian AS regular algebra to $A$. As a consequence, 
$c(A)=+\infty$ by definition.

In \cite{RS}, Rogalski and Sierra provide a family of examples of noetherian
Koszul algebras of global dimension 4 for which the $\chi$-condition fails.
By similar arguments, these algebras have infinite
CM regularity and concavity.
\end{example}

Before we give more examples, we prove some well-known
lemmas.

\begin{lemma}
\label{zzlem5.2}
Let $A$ be a noetherian connected graded algebra and let 
$\Omega$ be a regular normal element of degree 1 or 2. Let
$B=A/(\Omega)$. Then $\Torreg(_B\Bbbk)\leq \Torreg(_A\Bbbk)$.
Consequently, the following hold.
\begin{enumerate}
\item[(1)]
If $A$ has finite global dimension, then 
$\Torreg(_B\Bbbk)<\infty$.
\item[(2)] \cite[Theorem 1.2]{P} If $A$ is Koszul, then so is $B$.
\item[(3)]
Suppose $\deg \Omega=1$. If $A$ has finite global dimension,
then so does $B$.
\end{enumerate}
\end{lemma}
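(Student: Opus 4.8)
The plan is to compare minimal free resolutions of the trivial module over $A$ and over $B$ through the change of rings spectral sequence of \eqref{E4.3.1} (see \cite[Theorem 10.60]{Ro}), now applied to the surjection $A\to B$ rather than to a finite map $T\to A$. Abbreviating $U_n:=\Tor^A_n(\Bbbk,\Bbbk)$ and $V_n:=\Tor^B_n(\Bbbk,\Bbbk)$, this spectral sequence takes the form
$$E^2_{p,q}=\Tor^B_p\big(\Tor^A_q(\Bbbk_A,B),\Bbbk\big)\Longrightarrow U_{p+q}.$$
The first step is to compute the coefficients $\Tor^A_q(\Bbbk_A,B)$. Because $\Omega$ is regular and normal of degree $d$, right multiplication by $\Omega$ gives a length-one free resolution $0\to A(-d)\xrightarrow{\,\cdot\,\Omega\,}A\to B\to 0$ of $B$ as a left $A$-module; tensoring with $\Bbbk_A$ and using $\Omega\in\fm$ kills the differential, so that $\Tor^A_0(\Bbbk_A,B)=\Bbbk$, $\Tor^A_1(\Bbbk_A,B)=\Bbbk(-d)$, and $\Tor^A_q(\Bbbk_A,B)=0$ for $q\ge 2$. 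Hence the $E^2$-page is supported on the two rows $q=0,1$, with $E^2_{p,0}=V_p$ and $E^2_{p,1}=V_p(-d)$.

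A spectral sequence concentrated in two adjacent rows collapses after the single differential $d_2\colon E^2_{p,0}\to E^2_{p-2,1}$ and produces a long exact sequence
$$\cdots\to U_n\to V_n\xrightarrow{\ \delta_n\ }V_{n-2}(-d)\to U_{n-1}\to V_{n-1}\to\cdots.$$
From the exact segment $U_n\to V_n\xrightarrow{\delta_n}V_{n-2}(-d)$ I would extract the estimate $\deg V_n\le\max\{\deg U_n,\ \deg V_{n-2}+d\}$, and then induct on $n$. Writing $r:=\Torreg(_A\Bbbk)$, which we may take finite, the definition of Tor-regularity gives $\deg U_n\le n+r$, while the inductive hypothesis gives $\deg V_{n-2}+d\le(n-2+r)+d$. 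The decisive point, and the unique place where the hypothesis $d\le 2$ is used, is that $(n-2+r)+d\le n+r$ holds exactly when $d\le 2$; this is precisely what breaks down for $\deg\Omega\ge 3$, in harmony with the non-Koszul quotients $\Bbbk[x]/(x^d)$ of Example~\ref{zzex3.1}. The base cases are subsumed in the same recursion once we set $V_n=0$ for $n<0$. We conclude $\deg V_n\le n+r$, i.e. $t^B_n(\Bbbk)-n\le\Torreg(_A\Bbbk)$ for every $n$, and taking the supremum yields $\Torreg(_B\Bbbk)\le\Torreg(_A\Bbbk)$.

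Consequences (1) and (2) are then immediate. For (1), if $A$ has finite global dimension then the minimal resolution of $_A\Bbbk$ is finite with finite-dimensional terms, so $\Torreg(_A\Bbbk)<\infty$ and hence $\Torreg(_B\Bbbk)<\infty$. For (2), $A$ Koszul means $\Torreg(_A\Bbbk)=0$; since $\Torreg(_B\Bbbk)\ge 0$ always, the inequality forces $\Torreg(_B\Bbbk)=0$, so $B$ is Koszul, recovering \cite[Theorem 1.2]{P}.

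The main obstacle is part (3), which genuinely needs the hypothesis $\deg\Omega=1$ and does not follow from the regularity bound alone, since a degree-two quotient such as $\Bbbk[x]/(x^2)$ is Koszul (finite Tor-regularity) yet has infinite global dimension. Here I would return to the long exact sequence with $d=1$. If $\gldim A=g<\infty$ then $U_n=0$ for $n>g$, so for $n\ge g+2$ the sequence yields isomorphisms $V_n\cong V_{n-2}(-1)$. If $\gldim B$ were infinite, one of $V_g,V_{g+1}$ would be nonzero and this periodicity would propagate it upward; for example $V_g\ne 0$ gives $V_{g+2k}\cong V_g(-k)$ and hence $\ged V_{g+2k}=\ged V_g+k$. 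This contradicts the elementary lower bound $\ged\Tor^B_m(\Bbbk,\Bbbk)\ge m$ valid in any minimal graded resolution over a connected graded algebra, which would force $\ged V_g+k\ge g+2k$ for all $k$. Therefore $V_g=V_{g+1}=0$, the isomorphisms give $V_n=0$ for all $n\ge g$, and $\gldim B\le g-1<\infty$. Alternatively, part (3) may be deduced from the standard change of rings theorem for a degree-one regular normal non-zero-divisor.
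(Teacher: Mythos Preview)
Your proof is correct and follows essentially the same route as the paper: the same change-of-rings spectral sequence (the paper cites \cite[Theorem 10.71]{Ro} rather than \eqref{E4.3.1}, but the two differ only by the symmetry of $\Tor$), the same two-row collapse to a long exact sequence, and the same induction $\deg V_n\le\max\{\deg U_n,\deg V_{n-2}+d\}$ for the main inequality, with parts (1) and (2) as immediate corollaries.

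For part (3) your argument is a mild variant. The paper works one step at a time: for a fixed $k\ge g+2$ the isomorphism $V_k\cong V_{k-2}(-1)$ gives $\ged V_k=\ged V_{k-2}+1$, while minimality of the resolution forces $\ged V_k\ge\ged V_{k-2}+2$, a contradiction; hence $V_k=0$ for all $k\ge g+2$ and $\gldim B\le g+1$. You instead iterate the isomorphism to obtain $V_{g+2k}\cong V_g(-k)$ and play $\ged V_g+k$ off against the global bound $\ged V_m\ge m$; this yields the sharper (and in fact optimal) conclusion $V_g=V_{g+1}=0$, so $\gldim B\le g-1$. Both are valid; your version recovers the expected drop by one in global dimension, at the cost of a slightly longer argument.
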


\begin{proof} The first part of the proof is copied from the 
proof of \cite[Theorem 1.11]{KKZ2}. Since $B$ is a factor 
ring of $A$, there is a graded version of the change of rings 
spectral sequence given in \cite[Theorem 10.71]{Ro}
\begin{equation}
\label{E5.2.1}\tag{E5.2.1}
E^2_{p, q}:=\Tor^B_p(\Bbbk,\Tor^A_q(B,\Bbbk))
\Longrightarrow \Tor^A_n(\Bbbk,\Bbbk).
\end{equation}
Let $a = \deg \Omega$.
Since $B=A/(\Omega)$ and $\Omega$ is a regular normal element, we have
$\Tor^A_0(B, \Bbbk) =\Bbbk$, $\Tor^A_1(B, \Bbbk) =\Bbbk(-a)$
and $\Tor^A_i(B, \Bbbk) =0$ for $i > 1$. Hence the $E^2$-page 
of the spectral sequence \eqref{E5.2.1} has only two possibly 
nonzero rows; namely
$$\begin{aligned}
q=0 :& \quad \Tor^B_p(\Bbbk, \Bbbk) {\text{ for $p =0, 1, 2, \cdots$, and}}\\
q=1 :& \quad \Tor^B_p(\Bbbk, \Bbbk(-a)) {\text{ for $p =0, 1, 2, \cdots$.}}
\end{aligned}
$$
Since \eqref{E5.2.1} converges, we have $\Tor^B_0(\Bbbk, \Bbbk) 
=\Tor^A_0(\Bbbk, \Bbbk) =\Bbbk$ and a long exact sequence
$$\begin{aligned}
\cdots &\longrightarrow \Tor^B_{4}(\Bbbk,\Bbbk)
        \longrightarrow \Tor^B_{2}(\Bbbk,\Bbbk)(-a)
        \longrightarrow \\
\longrightarrow \Tor^A_{3}(\Bbbk,\Bbbk)
&\longrightarrow \Tor^B_{3}(\Bbbk,\Bbbk)
        \longrightarrow \Tor^B_{1}(\Bbbk,\Bbbk)(-a)
        \longrightarrow \\
\longrightarrow \Tor^A_{2}(\Bbbk,\Bbbk)
&\longrightarrow \Tor^B_{2}(\Bbbk,\Bbbk)
        \longrightarrow \Tor^B_{0}(\Bbbk,\Bbbk)(-a)
        \longrightarrow \\
\longrightarrow \Tor^A_{1}(\Bbbk,\Bbbk)
&\longrightarrow \Tor^B_{1}(\Bbbk,\Bbbk)
        \longrightarrow 0.
\end{aligned}
$$
By the $k$-th row from the bottom in the above exact sequence, we have
$$\begin{aligned}
\deg(\Tor^B_k&(\Bbbk,\Bbbk))-k\\
&\leq \max\{\deg(\Tor^A_k(\Bbbk,\Bbbk))-k, 
\deg(\Tor^B_{k-2}(\Bbbk,\Bbbk)(-a))-k\}\\
&=\max\{\deg(\Tor^A_k(\Bbbk,\Bbbk))-k, 
\deg(\Tor^B_{k-2}(\Bbbk,\Bbbk))-(k-2) -(2-a)\}\\
&\leq \max\{\Torreg(_A\Bbbk), 
\Torreg(_A\Bbbk) -(2-a)\}\\
&\leq \Torreg(_A\Bbbk)
\end{aligned}
$$
where we use $\Tor^B_{k-2}(\Bbbk,\Bbbk))-(k-2)
\leq \Torreg(_A\Bbbk)$ by the induction hypothesis 
and $(2-a)\geq 0$ as $a=1$ or $2$. The assertion
follows from the definition.

Parts (1) and (2) of this lemma follow immediately from the main assertion.

(3) For each $k> \gldim A+1$, we have an exact sequence
$$0(=\Tor^A_k(\Bbbk,\Bbbk))\to \Tor^B_k(\Bbbk,\Bbbk)
\to \Tor^B_{k-2}(\Bbbk,\Bbbk)(-1)\to 0(=\Tor^A_{k-1}(\Bbbk,\Bbbk)).$$
So $\ged \Tor^B_k(\Bbbk,\Bbbk)=\ged \Tor^B_{k-2}(\Bbbk,\Bbbk)+1$.

If $\Tor^B_k(\Bbbk,\Bbbk)\neq 0$, $B$ being connected graded
implies that 
$\ged \Tor^B_k(\Bbbk,\Bbbk)\geq \ged \Tor^B_{k-2}(\Bbbk,\Bbbk)+2$
by using the minimal free resolution of $\Bbbk$. This yields a
contradiction, and thus $\Tor^B_k(\Bbbk,\Bbbk)=0$, or
$\gldim B<k<\infty$.
\end{proof}

See Lemma~\ref{zzlem4.3} and Corollary~\ref{zzcor4.7} for 
related results. Note that the inequality 
$\Torreg(_B\Bbbk)\leq \Torreg(_A\Bbbk)$ in Lemma~\ref{zzlem5.2} 
can be strict, see Example~\ref{zzex5.4}(iv).
Part (3) of Lemma~\ref{zzlem5.2} is a special case of 
Corollary~\ref{zzcor4.7} if $A$ satisfies Hypothesis 
\ref{zzhyp1.3}.

\begin{lemma}
\label{zzlem5.3}
Let $A$ be a noetherian connected graded AS Gorenstein algebra of 
type $(d,\bfl)$. Let $\Omega$ be a regular normal element 
of degree $a$.
\begin{enumerate}
\item[(1)]
If $a \geq \bfl-d+2$, then $A/(\Omega)$ is not AS regular.
\item[(2)]
Suppose $A$ is generated in degree 1 and is not Koszul. 
If $a\geq \max\{3, \bfl-d+1\}$, then $A/(\Omega)$ is 
not AS regular.
\end{enumerate}
\end{lemma}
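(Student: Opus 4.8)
The plan is to work with the quotient $B:=A/(\Omega)$ and to reduce both parts to numerical comparisons of regularities. The one external input I will rely on is the standard fact that, because $\Omega$ is a regular normal element of degree $a$, the quotient $B$ is again AS Gorenstein, now of type $(d-1,\bfl-a)$. This follows from the short exact sequence of graded left $A$-modules
$$0\to A(-a)\xrightarrow{\ \cdot\,\Omega\ } A\to B\to 0$$
(this map is a left $A$-module monomorphism because $\Omega$ is normal and regular) together with a change-of-rings computation comparing $\Ext_A(\Bbbk,A)$ with $\Ext_B(\Bbbk,B)$. Granting this, everything else is bookkeeping with the invariants of Section~\ref{zzsec2} and the spectral sequence \eqref{E5.2.1}.

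For part (1), I would argue by contradiction. Suppose $B$ is AS regular. Then $B$ is AS regular of type $(d-1,\bfl-a)$, so Example~\ref{zzex2.1}(3) gives $\CMreg(B)=(d-1)-(\bfl-a)\le 0$, i.e. $a\le \bfl-d+1$. This contradicts the hypothesis $a\ge \bfl-d+2$, so $B$ cannot be AS regular.

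For part (2), I would first dispose of the range $a\ge\bfl-d+2$ by citing part (1). Since $a\ge\max\{3,\bfl-d+1\}$, the only remaining case is $a=\bfl-d+1$ together with $a\ge 3$; here $B$ has type $(d-1,d-1)$. If $B$ were AS regular, then $\CMreg(B)=(d-1)-(d-1)=0$ by Example~\ref{zzex2.1}(3), and hence $\Torreg(_B\Bbbk)=-\CMreg(B)=0$ by Example~\ref{zzex2.4}(3); that is, $B$ would be Koszul. The crux is to contradict this using the degree of $\Omega$. Taking $M=\Bbbk$ in the change-of-rings spectral sequence \eqref{E5.2.1}, exactly as in the proof of Lemma~\ref{zzlem5.2}, produces the exact segment
$$\Tor^B_2(\Bbbk,\Bbbk)\to \Tor^B_0(\Bbbk,\Bbbk)(-a)\to \Tor^A_1(\Bbbk,\Bbbk).$$
Now $\Tor^B_0(\Bbbk,\Bbbk)(-a)=\Bbbk(-a)$ sits in degree $a$, whereas $\Tor^A_1(\Bbbk,\Bbbk)$ sits in degree $1$ since $A$ is generated in degree $1$; as $a\ge 3>1$, the right-hand map vanishes for degree reasons and the left-hand map must therefore be surjective. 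Thus $\Tor^B_2(\Bbbk,\Bbbk)_a\neq 0$ with $a>2$, so $B$ is not Koszul, contradicting the previous sentence. Hence $B$ is not AS regular.

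The part I expect to need the most care is the external input, namely that the AS index drops by exactly $a$, so that $B$ has type $(d-1,\bfl-a)$ and, in the boundary case of part (2), type $(d-1,d-1)$. This precise index bookkeeping is what forces the Koszulness of a putatively AS-regular $B$ in part (2); once it is in place, the degree-$a$ relation coming from $\Omega$ makes the failure of Koszulness immediate from \eqref{E5.2.1}, and the numerical inequality in part (1) is automatic.
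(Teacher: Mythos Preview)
Your proposal is correct and follows the same overall plan as the paper. For part~(1) the paper computes $\CMreg(B)=(d-\bfl)+(a-1)$ directly from local cohomology of the short exact sequence and then invokes \eqref{E2.1.2}; your contrapositive via Example~\ref{zzex2.1}(3) is the same computation rephrased. For part~(2) both arguments reduce to the boundary case $a=\bfl-d+1\ge 3$ and show a putatively AS regular $B$ would be Koszul; the only difference is how non-Koszulness is established: the paper observes from the presentation $B\cong\Bbbk\langle x_1,\dots,x_n\rangle/(R,\Omega)$ that $B$ is not quadratic (since $\deg\Omega\ge 3$ and $\Omega\neq 0$ in $A$), whereas you use the long exact sequence from \eqref{E5.2.1} to exhibit a nonzero class in $\Tor^B_2(\Bbbk,\Bbbk)_a$. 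These are two views of the same fact---$\Omega$ contributes a minimal relation of degree $a$ in $B$---so the difference is cosmetic.
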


\begin{proof}
By the Rees Lemma, $B:=A/(\Omega)$ is also AS Gorenstein 
(of injective dimension $d-1$). Applying $H^i_{\fm}(-)$ 
to the short exact sequence of left $A$-modules,
$$0\to A(-a)\to A\to B\to 0,$$
we obtain that $H^{i}_{\fm}(B)=0$ for all $i\neq d-1$ and
$$0\to H^{d-1}_{\fm}(B)\to H^d_{\fm}(A(-a))\to
H^d_{\fm}(A)\to 0$$
is an exact sequence.
Since $H^d_{\fm}(A)$ is nonzero and bounded above, 
$$\deg H^{d-1}_{\fm}(B)=\deg H^d_{\fm}(A(-a))
=\deg H^d_{\fm}(A)+a=-\bfl+a.$$
This implies that
\begin{equation}
\label{E5.3.1}\tag{E5.3.1}
\CMreg(B)=(d-1)+(-\bfl+a)=(d-\bfl)+(a-1).
\end{equation}
As a consequence, $B$ is of type $(d-1, \bfl-a)$. 

(1) If $a\geq \bfl-d+2$, then, by 
\eqref{E5.3.1}, $\CMreg(B)\geq 1$. By \eqref{E2.1.2},
$B$ is not AS regular.

(2) Part (1) takes care of the case when $a\geq \bfl-d+2$.
It remains to consider the case $a=\bfl-d+1\geq 3$. 
By \eqref{E5.3.1}, $\CMreg(B)=a-(\bfl-d+1)=0$. If 
$B$ is AS regular, then $\Torreg(_B\Bbbk)=-\CMreg(B)=0$
or $B$ is Koszul. Since $A$ is isomorphic to $\Bbbk\langle x_1,\cdots,x_n
\rangle/(R)$ for some $x_i$ of degree 1, $B$ is isomorphic to 
$\Bbbk\langle x_1,\cdots,x_n\rangle/(R,\Omega)$. Since 
$\deg \Omega\geq 3$, $B$ is not quadratic. So $B$ is not
Koszul, yielding a contradiction. Therefore $B$ is not AS regular.
\end{proof} 

Example~\ref{zzex5.4}(iv) shows that if $a=\max\{2, 
\bfl-d+1\}$, then $A/(\Omega)$ may be AS regular.

\begin{example}
\label{zzex5.4}
Let $T$ be a noetherian AS regular algebra. Then
$\Torreg(_T\Bbbk)<\infty$. Let $B$ be a factor ring 
$T/(\Omega)$ where $\Omega$ is a regular normal element of degree $a$. 
The algebra $B$ is $s$-Cohen--Macaulay where $s=\gldim T-1=d-1$
and by \eqref{E5.3.1},
\begin{equation}
\label{E5.4.1}\tag{E5.4.1}
\CMreg(B)=\CMreg(T)+(a-1).
\end{equation}
If $a$ is 1 or 2, by Lemma~\ref{zzlem5.2}(1), 
$\Torreg(_B\Bbbk) <\infty$. 

Now we give some explicit examples. 

\begin{enumerate}
\item[(i)] 
Let $A=\Bbbk[x]/(x^a)$ where $a\geq 2$. 
If $a\geq 3$, $\Torreg(_A\Bbbk)=\infty$ by Example~\ref{zzex3.1}. 
This means that the assertion in Lemma~\ref{zzlem5.2}(1) fails 
if $\deg \Omega>2$.
\item[(ii)]
Let $T$ be the algebra $\Bbbk\langle x,y\rangle/(x^2y-yx^2,xy^2-y^2x)$,
which is a noetherian AS regular algebra of type $(3,4)$
(a special case of the algebra given in Example~\ref{zzex2.4}(3)).
By Example~\ref{zzex2.4}(3), $\Torreg(_T\Bbbk)=1$.

It is easy to see that $\Omega:=x^2$ is a normal regular element
of $T$. Let $B=T/(\Omega)$. Then, by Lemma 
\ref{zzlem5.2}(1), $\Torreg(_B\Bbbk)\leq \Torreg(_T\Bbbk)=1$.
Since $B$ is isomorphic to $\Bbbk\langle x,y\rangle/(x^2,xy^2-y^2x)$
which is not Koszul, $\Torreg(_B\Bbbk)\geq 1$. Therefore
$\Torreg(_B\Bbbk)= 1$. By \eqref{E5.4.1},
$$\CMreg(B)=\CMreg(T)+(2-1)=-\Torreg(_T\Bbbk)+1=-1+1=0.$$
Hence $\ASreg(B)=1$ and $B$ is not AS regular. This example
shows that Theorem~\ref{zzthm4.5}(2) fails 
without the commutativity assumption.

Let $A=B^{\otimes n}$ for any $n\geq 1$. Then it is easy to check that
$\CMreg(A)=0$ and $\Torreg(_A\Bbbk)=\ASreg(A)=n$. As a consequence,
$A$ is not AS regular. By Theorem~\ref{zzthm4.5}(1), there does not
exist a finite map from a Koszul AS regular algebra to $A$ 
(for any $n\geq 1$).
\item[(iii)]
Let $B$ be the algebra in part (ii). Since $T\to B$ is a surjective
map, $c(B)\leq -\CMreg(T)=\Torreg(_T\Bbbk)=1$. We claim that 
$c(B)=1$. First, by definition, $c(B)\geq 0$, and hence $c(B)$ is either 
0 or 1. Now, since $\CMreg(B)= 0$, we have 
$c_{-}(B)=c(B)+\CMreg(B)=c(B)$,
which implies that $c_{-}(B)$ is either 0 or 1. Since $B$ is not 
AS regular by part (ii), by Theorem~\ref{zzthm0.12}, $c_{-}(B)=1$. 
Therefore $c(B)=c_{-}(B)=1$ and we have proved the claim.

It is easy to see that
$$1\leq c(B^{\otimes n})=c_{-}(B^{\otimes n})\leq n.$$
It would be interesting to work out the exact value of 
$c(B^{\otimes n})$.
\item[(iv)]
Let $T$ be as in part (ii) and let $\Omega$ be $xy-yx$. Then $\Omega$
is a regular normal element of $T$ such that $B:=T/(\Omega)$ is the 
commutative polynomial ring $\Bbbk[x,y]$. So 
$$\Torreg(_B\Bbbk)=0<1 =\Torreg(_T\Bbbk)$$
where the last equation is given in part (ii).

Since $T$ is of type $(3,4)$, $a=2=\max\{2, \bfl-d+1\}$. Therefore
Lemma~\ref{zzlem5.3}(2) fails if the hypothesis $a\geq \max\{3, \bfl-d+1\}$
is replaced by $a\geq \max\{2, \bfl-d+1\}$.
\end{enumerate}
\end{example}

Throughout this paper, we were mainly interested in five numerical homological invariants:
\[
\Torreg(_A\Bbbk), \quad \CMreg(A), \quad c(A), 
\quad \ASreg(A), \quad {\text{and}} \quad c_{-}(A).
\]
By definition [Definitions~\ref{zzdef0.6} and 
\ref{zzdef0.9}(3)], the last two invariants are dependent on the 
first three. We now seek to understand what values
are possible for the first three invariants.

\begin{definition}
\label{zzdef5.5}
Let $A$ be a noetherian connected graded algebra with balanced
dualizing complex.
\begin{enumerate}
\item[(1)]
The {\it $\ta$-pair associated to $A$} is defined to be
$$\ta(A):=(\Torreg(_A\Bbbk),\ASreg(A))\in 
({\mathbb N}\times {\mathbb N}) \cup\{(+\infty,+\infty)\}.$$
\item[(2)]
The {\it $\tc$-pair} associated to $A$ is defined to be
$$\tc(A):=(\Torreg(_A\Bbbk),\CMreg(A))\in
({\mathbb N}\cup\{+\infty\})\times {\mathbb Z}.\qquad $$
Note that $\CMreg(A)\geq -\Torreg(_A\Bbbk)$ for every
$A$.
\end{enumerate}
\end{definition}

In the next lemma, let $D$ denote a noetherian connected 
graded algebra with balanced dualizing complex.

\begin{lemma}
\label{zzlem5.6}
\begin{enumerate}
\item[(1)]
For every $t\in {\mathbb N}\cup \{+\infty\}$ and $c\in {\mathbb Z}$
with $c\geq -t$, there is an algebra $D$ such that $\tc(D)=(t,c)$.
\item[(2)]
For every $t\in {\mathbb N}$ and $c\in {\mathbb Z}$
with $c\geq -t$, there is an algebra $D$ such that $\tc(D)=(t,c)$
and $c(D)=t$.
\item[(3)]
For every pair $(t,a)\in 
({\mathbb N}\times {\mathbb N}) \cup\{(+\infty,+\infty)\}$, 
there is an algebra $D$ such that $\ta(D)=(t,a)$.
\item[(4)]
For every pair $(t,b)\in {\mathbb N}\times {\mathbb N}$, 
there is an algebra $D$ such that 
$$(\Torreg(_A\Bbbk),c_{-}(D))=(c(D),c_{-}(D))=(t,b).$$
\end{enumerate}
\end{lemma}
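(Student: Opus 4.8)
The plan is to deduce parts (3) and (4) from parts (1) and (2), and to prove (1) and (2) by writing down explicit tensor products of a few standard building blocks. Since $\ASreg(D)=\Torreg({}_D\Bbbk)+\CMreg(D)$, a pair $\ta(D)=(t,a)$ with $t,a\in{\mathbb N}$ is realized exactly when $\tc(D)=(t,a-t)$ is realized; as $a-t\geq -t$, part (3) for finite $(t,a)$ is an instance of part (1), while the case $(t,a)=(+\infty,+\infty)$ follows from any $D$ with $\Torreg({}_D\Bbbk)=+\infty$ (here $\CMreg(D)$ is finite by Hypothesis~\ref{zzhyp1.3}, so $\ASreg(D)=+\infty$ automatically). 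Likewise, since $c_{-}(D)=c(D)+\CMreg(D)$, realizing $(c(D),c_{-}(D))=(t,b)$ with $c(D)=\Torreg({}_D\Bbbk)=t$ amounts to realizing $\tc(D)=(t,b-t)$ together with $c(D)=t$; as $b-t\geq -t$, this is precisely part (2) with $c=b-t$. So it suffices to prove (1) and (2).

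For part (1) I would use three building blocks: the non-Koszul AS regular algebra $T$ of type $(3,4)$ from Example~\ref{zzex2.4}(3), with $\Torreg({}_T\Bbbk)=1$ and $\CMreg(T)=-1$; the Koszul algebra $\Bbbk[x]/(x^2)$, with $\Torreg({}_{\Bbbk[x]/(x^2)}\Bbbk)=0$ and $\CMreg(\Bbbk[x]/(x^2))=1$; and $\Bbbk[x]/(x^3)$, with $\Torreg({}_{\Bbbk[x]/(x^3)}\Bbbk)=+\infty$ and $\CMreg(\Bbbk[x]/(x^3))=2$ (Example~\ref{zzex3.1}). All three are noetherian connected graded with balanced dualizing complexes, and these properties pass to tensor products. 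Using additivity of $\Torreg$ (Lemma~\ref{zzlem2.3}) and of $\CMreg$ (Remark~\ref{zzrem2.9}) over $\otimes$, the algebra $T^{\otimes t}\otimes(\Bbbk[x]/(x^2))^{\otimes m}$ has $\tc=(t,-t+m)$ for $t,m\in{\mathbb N}$, realizing every $(t,c)$ with $c\geq -t$; and $\Bbbk[x]/(x^3)\otimes T^{\otimes j}\otimes(\Bbbk[x]/(x^2))^{\otimes k}$ has $\tc=(+\infty,2-j+k)$, realizing every $(+\infty,c)$ with $c\in{\mathbb Z}$. This settles (1).

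For part (2), given finite $t$ and $c\geq -t$, set $m=c+t\geq 0$ and take the same algebra $D=T^{\otimes t}\otimes(\Bbbk[x]/(x^2))^{\otimes m}$, so $\tc(D)=(t,c)$ by (1). Writing $D=S/(x_1^2,\dots,x_m^2)$ with $S=T^{\otimes t}\otimes\Bbbk[x_1,\dots,x_m]$ an AS regular algebra, the surjection $S\to D$ is a finite map with $\CMreg(S)=-t$, so $c(D)\leq -\CMreg(S)=t$ directly from the definition of concavity. For the matching lower bound I would first rewrite concavity intrinsically: for any finite map $\phi\colon T'\to D$ with $T'$ AS regular, $D$ has finite projective dimension over $T'$, so Theorem~\ref{zzthm0.7} yields $\Torreg({}_{T'}D)=\CMreg(D)-\CMreg(T')$, and since $\CMreg({}_{T'}D)=\CMreg(D)$ and $-\CMreg(T')=\Torreg({}_{T'}\Bbbk)$ for AS regular $T'$, this gives $c_{-}(D)=\inf_{T'}\Torreg({}_{T'}D)$ and shows that $c(D)\geq t$ is \emph{equivalent} to the assertion that $\Torreg({}_{T'}\Bbbk)\geq t=\Torreg({}_D\Bbbk)$ for every AS regular algebra $T'$ admitting a finite map to $D$.

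The heart of the matter, and the step I expect to be the main obstacle, is therefore the inequality $\Torreg({}_{T'}\Bbbk)\geq\Torreg({}_D\Bbbk)$ for all AS regular covers $T'$; this holds precisely because $\Torreg({}_D\Bbbk)$ is finite, and it genuinely fails when $\Torreg({}_D\Bbbk)=+\infty$ (e.g.\ $\Bbbk[x]/(x^3)$ admits the Koszul cover $\Bbbk[x]$, so its concavity is $0$). Two boundary cases are easy: if $T'$ is Koszul then Theorem~\ref{zzthm4.5}(1) forces $\CMreg(D)\geq 0$ and, when $\CMreg(D)<0$, excludes Koszul covers outright; and if ${}_{T'}D$ is linear then Lemma~\ref{zzlem4.3}(5) gives $\Torreg({}_{T'}\Bbbk)=\Torreg({}_D\Bbbk)$ on the nose. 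The remaining case, a non-linear cover with $\Torreg({}_{T'}\Bbbk)<\Torreg({}_D\Bbbk)$, is the hard one, and I would attack it with the change-of-rings spectral sequence \eqref{E4.3.1} computing $\Tor^{T'}_{\ast}(\Bbbk,\Bbbk)$ from the coefficient modules $N_q=\Tor^{T'}_q(\Bbbk,D)$ and the groups $\Tor^D_{\ast}$: the finiteness of $\Torreg({}_D\Bbbk)$ bounds the internal degrees on the $E^2$-page, and the delicate point is a degree count showing that a class of maximal diagonal degree $\Torreg({}_D\Bbbk)$ survives to $E^{\infty}$ and hence contributes to $\Torreg({}_{T'}\Bbbk)$. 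Here one combines $\ged\Tor^D_p(N,\Bbbk)\geq\ged(N)+p$ with the bound $\Torreg({}_D N_q)\leq\CMreg(N_q)+\Torreg({}_D\Bbbk)$ from Theorem~\ref{zzthm2.5}(1) to argue that no nonzero differential can hit or leave the top diagonal class; verifying this cancellation analysis carefully is the real work. Granting the inequality, $c(D)=t$, which proves (2) and, via the reductions above, completes all four parts.
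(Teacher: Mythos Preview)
Your reductions of (3) and (4) to (1) and (2), your construction for (1), and your upper bound $c(D)\leq t$ in (2) are all correct and essentially match the paper's approach.

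The gap is in your lower bound $c(D)\geq t$ for part (2). You reformulate this as the inequality $\Torreg({}_{T'}\Bbbk)\geq\Torreg({}_D\Bbbk)$ for \emph{every} AS regular algebra $T'$ admitting a finite map to $D$, and then try to prove this general inequality via the change-of-rings spectral sequence. But that general inequality is essentially the content of the open Question~\ref{zzq5.15} (and its natural generalization in Question~\ref{zzq5.14}): the paper explicitly does not know whether finiteness of $\Torreg({}_D\Bbbk)$ forces $c(D)\geq\Torreg({}_D\Bbbk)$. Your spectral-sequence sketch does not supply a proof of this, and the ``survival of the top-diagonal class'' step is exactly where such an argument would break down in general; the bounds you invoke from Theorem~\ref{zzthm2.5}(1) control $\deg$ of the $E^2$-terms but not their $\ged$, so you cannot rule out cancellation by incoming differentials.

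The paper avoids this difficulty entirely by exploiting the \emph{specific} structure of the algebra $D=T^{\otimes t}\otimes E^{\otimes m}$ (with $E=\Bbbk[x]/(x^2)$): since $E^{\otimes m}$ is finite-dimensional, there are finite maps in both directions between $D$ and $T^{\otimes t}$ (the inclusion $T^{\otimes t}\hookrightarrow D$ and the quotient $D\twoheadrightarrow T^{\otimes t}$ killing the $x_i$). Proposition~\ref{zzpro4.1} then gives $c(D)=c(T^{\otimes t})$, and Theorem~\ref{zzthm0.10}(1) gives $c(T^{\otimes t})=-\CMreg(T^{\otimes t})=t$. So the lower bound follows in one line, with no spectral sequence at all. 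You should replace your general argument by this observation.
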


\begin{proof} (1) First assume $t=\infty$. Let $T$ be the
algebra in Example~\ref{zzex5.4}(ii). Then 
$\CMreg(T)=-1$. Let $A=\Bbbk[x]/(x^3)$. Then $\CMreg(A)=2$.
For every integer $c$ there are nonnegative integers $p,q$ 
such that $c=(-1)p+2q$. Let $D=T^{\otimes p}\otimes A^{\otimes q}$.
One can calculate
$$\CMreg(D)=p\CMreg(T)+q\CMreg(A)=p(-1)+q(2)=c$$
as desired.

Now we assume that $t\in {\mathbb N}$. Let $T$ be the
algebra in Example~\ref{zzex5.4}(ii). Then $\Torreg(_T\Bbbk)=1$
and $\Torreg(_{T^{\otimes t}}\Bbbk)=t$ by Lemma~\ref{zzlem2.3}.
Let $a=t+c$ which is a nonnegative integer by the hypothesis.
Let $E$ be the algebra $\Bbbk[x]/(x^2)$ which has 
$\CMreg(E)=1$ and $\Torreg(_E\Bbbk)=0$. Let
$D=T^{\otimes t}\otimes E^{\otimes a}$. Then 
$$\CMreg(D)=t\CMreg(T)+a\CMreg(E)=-t+a=c$$
and
$$\Torreg(D)=t\Torreg(T)+a\Torreg(E)=t+a 0=t$$
as desired.

(2) We use the second half of the proof of (1) and
note that 
$$c(D)=c(T^{\otimes t}\otimes E^{\otimes a})=
c(T^{\otimes t})=t$$
as desired. 

(3) Since $\ASreg(A)=\Torreg(_A\Bbbk)+\CMreg(A)$,
the assertion follows easily from part (1).

(4) Since $c_{-}(A)=c(A)+\CMreg(A)$,
the assertion follows easily from part (2).
\end{proof}

Note that the algebras $A$ in the above proof are 
generated in degree 1 and are module-finite over 
their centers. 
We conclude the paper with the following remarks.

\begin{remark}
\label{zzrem5.7} 
Assume Hypothesis~\ref{zzhyp1.3}.
\begin{enumerate}
\item[(1)]
It would be interesting to work out the range of 
$c(A)$ (resp. $c_{-}(A)$) for every fixed $\tc$-pair
$(t,c)$. When $c=-t$, by Theorems~\ref{zzthm0.8}
and \ref{zzthm0.12}(1), $c(A)$ must be $-c$. 
However, computing concavity $c(A)$ and normalized
concavity $c_{-}(A)$ is generally much harder than 
computing $\Torreg(_A\Bbbk)$ and $\CMreg(A)$. 
\item[(2)]
Further it would be interesting to study algebras with 
$\ta$ near $(0,0)$, for example, $\ta(A)=(1,0)$ 
or $\ta(A)=(0,1)$. It is not clear to us whether an algebra
$A$ with $\ta(A)=(0,1)$ is AS Gorenstein, although we do not
have any counterexample to this.
\end{enumerate}
\end{remark}

\begin{remark}
\label{zzrem5.8}
It is an open question whether the tensor 
product of two noetherian AS regular algebras
(or two algebras satisfying Hypothesis~\ref{zzhyp1.3})
is always noetherian. If we ignore this issue, 
we can consider the behavior of regularities 
with respect to the tensor product.
\begin{enumerate}
\item[(1)]
[Lemma~\ref{zzlem2.3}]
$\Torreg(_A\Bbbk)$ is additive in the following sense
$$\Torreg(_{A\otimes B}\Bbbk)=\Torreg(_A\Bbbk)
+\Torreg(_B\Bbbk).$$
\item[(2)]
[Remark~\ref{zzrem2.9}]
$\CMreg(A)$ is additive in the following sense
$$\CMreg(A\otimes B)=\CMreg(A)
+\CMreg(B).$$
\item[(3)]
It follows from parts (1) and (2),
$\ASreg(A)$ is additive in the following sense
$$\ASreg(A\otimes B)=\ASreg(A)
+\ASreg(B).$$
\item[(4)]
We do not know if $c(A)$ (resp. $c_{-}(A)$) 
is additive. It follows from the definition that
$$c(A\otimes B)\leq c(A)
+c(B).$$
The same is true for $c_{-}(A)$. 
\end{enumerate}
\end{remark}

\begin{remark}
\label{zzrem5.9}
It follows immediately from Theorem~\ref{zzthm0.10}(3)
that if $S$ and $T$ are noetherian AS regular algebras
and that $f:T\to S$ and $g:S\to T$ are finite maps, 
then both $f$ and $g$ are isomorphisms of graded algebras.

The above statement fails if we assume that $S$ and $T$
are only AS Gorenstein. Let $S=T=\Bbbk[x]/(x^2)$, which is
noetherian AS Gorenstein. Let $f(=g): T\to T$ be defined 
by sending $a+b x\mapsto a$ for all $a,b\in \Bbbk$.
Then $f$ is a finite map that is not an isomorphism of graded 
algebras.
\end{remark}

\begin{remark}
\label{zzrem5.10}
Let $T$ be a noetherian AS regular Koszul algebra and 
let $A$ be a connected graded algebra. Suppose $f:T\to A$ is a 
finite map. It is trivial that 
\begin{enumerate}
\item[(1)]
$\Torreg(_T A)\geq 0$.
\end{enumerate}
By Theorem~\ref{zzthm0.7}, 
$$\Torreg(_TA)=\Torreg(_TA)+\CMreg(T)=\CMreg(_TA)=\CMreg(A).$$
Now it follows from Theorem~\ref{zzthm4.5}(1) that
\begin{enumerate}
\item[(2)]
$\Torreg(_T A)=0$, or equivalently, $_TA$ is linear, if and only 
if $A$ is AS regular (and Koszul).
\end{enumerate}
Therefore $\Torreg(_TA)$ qualifies as an indicator of the AS
regular property in this case.
\end{remark}

We can say more in terms of the Hilbert series of the algebras.

\begin{remark}
\label{zzrem5.11}
We continue Remark~\ref{zzrem5.10} and 
consider a special case when $h_T(t)=(1-t)^{-d}$ where $d=
\gldim T$. Let $f:T\to A$ be a finite map.
\begin{enumerate}
\item[(1)]
Then $A$ is AS regular
if and only if there is an integer $0\leq d'\leq d$ such that the $i$-th 
term in the minimal free resolution of the $T$-module $_TA$ 
is isomorphic to $T(-i)^{\oplus {d' \choose i}}$ for all $i$.
If this happens, $A$ is Koszul and $h_A(t)=(1-t)^{-d+d'}$.
\item[(2)]
If $A$ is known to be Cohen--Macaulay, then $A$ is 
AS regular if and only if $h_A(t)=(1-t)^{-d+d'}$
for an integer $0\leq d'\leq d$. 
If this happens, all properties in part (1) hold.
\end{enumerate} 
\end{remark}

\begin{remark}
\label{zzrem5.12}
Let $A$ and $B$ be two connected graded algebras. We say that $A$ and
$B$ are {\it finite-equivalent} if there are finite graded
algebra homomorphisms $f:A\to B$ and $g: B\to A$. It is easy to 
that being finite-equivalent is an equivalence relation. For example,
$\Bbbk$ and $\Bbbk[x]/(x^{a})$ with $a\geq 2$ are finite-equivalent. 
It follows from Proposition~\ref{zzpro4.1} that 
\begin{enumerate}
\item[(1)]
if $A$ and $B$ are finite equivalent, then $c(A)=c(B)$.
\end{enumerate}
The concavity $c(A)$ also has the following properties:
\begin{enumerate}
\item[(2)]
If $A$ is commutative and generated in degree 1, then 
$c(A)=0$.
\item[(3)] [Proposition~\ref{zzpro4.1}(4)]
If $x$ is a commutative indeterminate of degree 1, then 
$c(A[x])=c(A)$.
\end{enumerate}
By Remark~\ref{zzrem5.9}, if $S$ and $T$ are finite-equivalent 
noetherian AS regular algebras, then $S\cong T$. Properties 
(1) and (2) fails for other invariants that we have defined 
in this paper. See Remark~\ref{zzrem5.8} for property (3). 
\end{remark}

\begin{remark}
\label{zzrem5.13}
The relationship between $\Torreg(_A\Bbbk)$ and $\CMreg(A)$ 
(resp. $c(A)$ and $\CMreg(A)$) is given in Lemma~\ref{zzlem5.6}(1,2).
It seems that there is a mysterious (maybe closer) connection between
$\Torreg(_A\Bbbk)$ and $c(A)$. By J{\o}rgensen's Theorem 
\ref{zzthm2.5}, 
$$\Torreg(_A\Bbbk)\geq -\CMreg(A)$$
and by Theorem~\ref{zzthm0.12}(1) and the definition,
$$c(A)=c_{-}(A)-\CMreg(A)\geq -\CMreg(A).$$
By Lemma~\ref{zzlem5.6}(2), for each $t\geq 0$, there is
an algebra $D$ such that $\Torreg(_A\Bbbk)=c(A)=t$.
\end{remark}
Based on this very limited evidence, we ask the
following two questions.
\begin{question}
\label{zzq5.14}
Suppose both $c(A)$ and $\Torreg(_A\Bbbk)$ are finite. Then 
is $\Torreg(_A\Bbbk)$ uniformly bounded by a function of $c(A)$
(or vice versa)?
\end{question}

A special case of the above question is related to a nice result in \cite{AP}
(also see \cite[Theorem 2.3]{Rom}). Is there a noncommutative version 
of this result?

\begin{question}
\label{zzq5.15}
Suppose $c(A)=0$ and $\Torreg(_A\Bbbk)<\infty$. Is then 
$\Torreg(_A\Bbbk)=0$ (or equivalently, is $A$ Koszul)?
\end{question}

\subsection*{Acknowledgments}
The authors thank the referee for their careful reading of the manuscript 
and useful suggestions, particularly in improving the proofs of Theorems~\ref{zzthm0.10}(1) and \ref{zzthm0.12}(1).
R. Won was partially supported by an AMS--Simons Travel Grant
and Simons Foundation grant \#961085.
J.J. Zhang was partially supported by the US National Science 
Foundation (Nos. DMS-1700825 and DMS-2001015).

\end{document}